\title[Quadratic estimates for perturbed Dirac type operators]
	{Quadratic estimates for perturbed Dirac type operators on doubling measure metric spaces}
\author{Lashi Bandara}
\address{Lashi Bandara, Centre for Mathematics and its Applications, 
Australian National University, Canberra, ACT, 0200, Australia}
\urladdr{\href{http://maths.anu.edu.au/~bandara}{http://maths.anu.edu.au/~bandara}}
\email{\href{mailto:lashi.bandara@anu.edu.au}{lashi.bandara@anu.edu.au}}
\keywords{Quadratic estimates, holomorphic functional calculi, doubling measure,
measure metric space, Dirac type operators,
Kato square root problem, Carelson measure, maximal function}
\subjclass[2010]{47B44, 42B35, 42B37}
\newtheorem{theorem}{Theorem}[section]
\newtheorem{corollary}[theorem]{Corollary}
\newtheorem{lemma}[theorem]{Lemma}
\newtheorem{proposition}[theorem]{Proposition}
\newtheorem{definition}[theorem]{Definition}
\newcommand{\mdot}{\cdotp}
\newcommand{\cbrac}[1]{\left(#1\right)}
\newcommand{\dbrac}[1]{\left\{#1\right\}}
\newcommand{\modulus}[1]{\left|#1\right|}
\newcommand{\set}[1]{\dbrac{#1}}
\newcommand{\dom}{ {\mathcal{D}}}
\newcommand{\ran}{ {\mathcal{R}}}
\newcommand{\nul}{ {\mathcal{N}}}
\DeclareMathOperator{\card}{card}
\newcommand{\R}{\mathbb{R}}
\newcommand{\C}{\mathbb{C}}
\newcommand{\In}{\mathbb{Z}}
\newcommand{\Na}{\ensuremath{\mathbb{N}}}
\newcommand{\script}[1]{\mathscr{#1}}
\newcommand{\re}{{\rm Re}\, }			% Real part
\renewcommand{\emptyset}{\varnothing}
\newcommand{\union}{\cup}
\newcommand{\Union}{\bigcup}
\newcommand{\intersect}{\cap}
\newcommand{\close}[1]{\overline{#1}}		% closure
\newcommand{\ind}[1]{\raisebox{\depth}{\(\chi\)}_{#1}}	% Indicator funciton
\renewcommand{\epsilon}{\varepsilon}
\renewcommand{\phi}{\varphi}
\newcommand{\norm}[1]{\left\| #1 \right\|}			% Norm
\newcommand{\spt}[1]{{\rm spt} {\text{ }}#1}	% Support 
\DeclareMathOperator{\dist}{dist}		% Distance
\DeclareMathOperator{\diam}{diam}		% Diam 
\DeclareMathOperator{\len}{\ell}			% Length
\DeclareMathOperator{\divv}{div}		% Divergence
\newcommand{\bnd}{\partial}			% Boundary partial d
\DeclareFontFamily{OT1}{restrictfont}{}
\DeclareFontShape{OT1}{restrictfont}{m}{n}{<-> fmvr8x}{}
\newcommand{\adj}[1]{{#1}^\ast}			% Adjoint star
\newcommand{\inprod}[1]{\left\langle #1 \right\rangle}	% inner product braces
\newcommand{\grad}{\nabla}			% Gradient delta
\DeclareMathOperator{\Lip}{\bf Lip}			% Lip\
\DeclareMathOperator{\Lipp}{Lip}		% Lip f(x)
\newcommand{\bddlf}{\mathcal{L}} 	% Bounded Linear Functions over #1
\newcommand{\Lp}[2][{}]{{\rm L}^{#2}_{\rm #1}}		% L_{#1}^{#2}
\newcommand{\Max}{{\mathcal{M}}}			% Maximal function
\newcommand{\Cone}{\Gamma}				% Cone over a point
\newcommand{\Tent}{\mathrm{T}}					% Tent
\newcommand{\Hil}{\script{H}}			% Hilbert Space
\newcommand{\Q}[1][{}]{Q_{#1}}			% Cube
\newcommand{\DyQ}{\script{Q}}			% Dyadic Cubes
\newcommand{\p}{p}				% p = log_2 C
\newcommand{\Spa}{\mathcal{X}}				% Symbol of Space
\newcommand{\sC}{\script{C}}
\newcommand{\sE}{\script{E}}
\newcommand{\cE}{\mathcal{E}}
\newcommand{\cF}{\mathcal{F}}
\newcommand{\Poincare}{Poincar\'e~}		% Poincare
\newcommand{\Av}{\mathcal{A}}
\newcommand{\pri}{\upgamma}
\newcommand{\Mul}{\mathrm{M}}
\newcommand{\Nfs}{\mathcal{N}}
\newcommand{\Carl}{\mathcal{C}}
\newcommand{\Cf}{\mathrm{C}}
\newcommand{\CBox}{\mathrm{R}}
\begin{document}

\begin{abstract}
We consider 
perturbations of Dirac type operators
on complete, connected
metric spaces
equipped
with a doubling measure. 
Under a suitable set of
assumptions, we prove
quadratic estimates
for such operators
and hence deduce
that these operators have
a bounded functional calculus.
In particular, we deduce a Kato square
root type estimate.
\end{abstract}

\maketitle

\tableofcontents

\parindent0cm
\setlength{\parskip}{\baselineskip}

\section{Introduction}

Let $\Spa$ be a complete, connected
metric space and
$\mu$ a Borel-regular doubling measure.
We consider  densely-defined, 
closed, nilpotent operators
$\Gamma$ on $\Lp{2}(\Spa, \C^N)$
and perturbed Dirac type operators
$\Pi_B = \Gamma + B_1 \adj{\Gamma} B_2$,
where $B_i$ are strictly accretive
$\Lp{\infty}$ matrix valued functions.
We prove quadratic estimates
$$
\int_{0}^\infty \norm{t\Pi_B(1 + t^2\Pi_B)^{-1}u}^2\ \frac{dt}{t}
	\simeq \norm{u}^2$$
for $u \in \close{\ran(\Pi_B)}$
under a set of hypotheses
(H1)-(H8).
These estimates are equivalent
to $\Pi_B$ having a bounded
holomorphic functional 
calculus.
This allows us to conclude
that $\dom(\sqrt{\Pi_B^2}) = \dom(\Pi_B)
= \dom(\Gamma) \intersect \dom(B_1 \adj{\Gamma} B_2)$
and that $\norm{\sqrt{\Pi_B^2}u} \simeq \norm{\Pi_B u}
\simeq \norm{\Gamma u} + \norm{B_1 \adj{\Gamma} B_2u}$.
When
$\Spa = \R^n$ and 
$\mu$ is the Lebesgue
measure, it is shown 
by Axelsson, Keith 
and McIntosh in \cite{AKMC} that this implies
$\dom(\sqrt{-\divv A \grad})) = \dom(\grad)$
and $\norm{\sqrt{-\divv A \grad}u} \simeq \norm{\grad u}$
for an appropriate class of perturbations $A$. 
Thus, we are justified in calling this a 
\emph{Kato square root type estimate}.

We proceed to prove our theorem
based on the ideas 
presented in \cite{AKMC}.
These ideas date back to the 
resolution of the Kato conjecture
by Auscher, Hofmann, Lacey,
McIntosh and Tchamitchian
in \cite{AHLMcT}.
The exposition \cite{HofmannEx} 
by Hofmann
is an excellent survey
of the history and resolution 
of the Kato conjecture.
Further historical references
include the article \cite{Mc90} by McIntosh
and \cite{AT} by Auscher and Tchamitchian.
More recently, the proof in \cite{AKMC}
was generalised by
Morris in \cite{Morris} for 
complete Riemannian manifolds
with \emph{exponential volume
growth}.
This work is beneficial
to us
since we rely
upon the same
abstract dyadic decomposition 
of Christ in \cite{Christ}.

The main novelty 
of the work presented here
is that we
have separated
the assumptions on the
operator $\Gamma$ from 
the underlying differentiable structure
of the space.
In general,
the spaces we consider may not admit
a differentiable structure.
However, we are motivated
by the existence of
measure metric spaces
more general than Riemannian
manifolds admitting such
structures.
See the work of Cheeger 
in \cite{Cheeger} and of Keith
in \cite{Keith}.

In our exposition, we follow the 
structure of the proof in
\cite{AKMC}.
We rephrase the proof
purely in terms of Lipschitz functions.
We use an upper gradient quantity,
namely the \emph{pointwise Lipschitz constant},
as a replacement for a gradient.
This is the key feature
that allows us to generalise
the proof in \cite{AKMC}.

The structure of this paper is as follows.
In \S\ref{HypoRes}, we state the hypotheses (H1)-(H8)
under which we obtain the quadratic estimates
and state the main results. We
devote \S\ref{Dya} to illustrating some important
consequences of the dyadic
decomposition in \cite{Christ}.
In \S\ref{MaxCarl}, 
we present some results about Carleson measures
and maximal functions on
doubling measure metric spaces.
These tools are crucial since
the proof of the main result proceeds
by reducing the main estimate 
to a Carleson measure estimate.
Lastly, we give a proof of the main
theorem in \S\ref{HarmAnal},
taking care to avoid 
unnecessary repetition
of the work of \cite{AKMC}
and \cite{Morris}, and 
highlight the key differences
which we have introduced.

\section*{Acknowledgements}
\addtocontents{Acknowledgements}

This work was undertaken
at the Centre for Mathematics
and its Applications at the
Australian National University
and supported 
by this institution and 
an Australian Postgraduate Award.

I am indebted to 
Alan McIntosh for his 
support, insight, and
excellent supervision
which made this work possible.
I would also like to thank
Andrew Morris, Rob Taggart,
and Pierre Portal for their 
encouragement and helpful 
suggestions.

\section{Hypotheses and the main results}
\label{HypoRes}

We list a set of
hypotheses (H1)-(H8).
These assumptions
are similar those in
\cite{AKMC},
with the exception of (H6) 
and (H8) which
require
modification due to 
the lack of a differentiable structure
in our setting. 
The assumptions (H1)-(H3)
are purely operator theoretic
and thus hold in sufficient generality. 
They are taken in verbatim from \cite{AKMC}
but we list them here for completeness. 
We emphasise that here,
$\Hil$ denotes an abstract Hilbert space.

\begin{enumerate}
\item[(H1)]
	The operator $\Gamma: \dom(\Gamma) \to \Hil$
	is closed, densely-defined and \emph{nilpotent}
	($\Gamma^2 = 0$).

\item[(H2)]
	The operators $B_1, B_2 \in \bddlf(\Hil)$
	satisfy
	\begin{align*}
	&\re\inprod{B_1u,u} \geq \kappa_1 \norm{u} &&\text{whenever }u \in \ran(\adj{\Gamma}), \\
	&\re\inprod{B_2u,u} \geq \kappa_2 \norm{u} &&\text{whenever }u \in \ran(\Gamma)
	\end{align*}
	where $\kappa_1, \kappa_2 > 0$ are constants.

\item[(H3)]
	The operators $B_1, B_2$ satisfy
	$$B_1B_2 (\ran(\Gamma)) \subset \nul(\Gamma)\text{ and }B_2B_1(\ran(\adj{\Gamma})) \subset \nul(\adj{\Gamma}).$$
\end{enumerate}

The full implications of these
assumptions are
listed in \S4 in \cite{AKMC}.
However, for the sake
of convenience,
we include some relevant details
from this reference.
Define $\adj{\Gamma}_B = B_1 \adj{\Gamma} B_2$,
$\Pi_B = \Gamma + \adj{\Gamma}_B$
and $\Pi = \Gamma + \adj{\Gamma}$.
Furthermore, define the following
associated bounded operators:
\begin{multline*}
R_t^B = (1 + it \Pi_B)^{-1},\ 
P_t^B = (1 + t^2 \Pi_B^2)^{-1},\\
Q_t^B = t\Pi_B (1 + t^2 \Pi_B^2)^{-1},\ 
\Theta_t^B = t \adj{\Gamma}_B (1 + t^2 \Pi_B^2)^{-1},
\end{multline*}
and write $R_t, P_t, Q_t,
\Theta_t$ by setting $B_1 = B_2 = 1$.
With this in mind, we
bring the attention
of the reader 
to the following important
proposition.

\begin{proposition}[Proposition 4.8 of \cite{AKMC}]
\label{Prop:HypRes:Main}
Suppose that $(\Gamma, B_1, B_2)$ satisfy
the hypotheses (H1)-(H3) and that
there exists $c > 0$ such that
$$ \int_{0}^\infty \norm{\Theta_t^B P_t u}^2\ \frac{dt}{t}
	\leq c \norm{u}^2$$
for all $u \in \ran(\Gamma)$,
together with three similar estimates
obtained by replacing $(\Gamma, B_1, B_2)$
by $(\adj{\Gamma},B_2,B_1)$,
$(\adj{\Gamma},\adj{B_2}, \adj{B_1})$
and $(\Gamma, \adj{B_1}, \adj{B_2})$.
Then, $\Pi_B$ satisfies
$$
\int_{0}^\infty \norm{Q_t^B u}^2\ \frac{dt}{t}
	\simeq \norm{u}^2$$ 
for all $u \in \close{\ran(\Pi_B)} \subset \Hil$.
Thus, $\Pi_B$ has a bounded  $H^\infty$ 
functional calculus.
\end{proposition}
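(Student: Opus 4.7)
The plan is to invoke McIntosh's theorem equating the two-sided quadratic estimate with the bounded $H^\infty$ functional calculus for bisectorial operators, thereby reducing the task to establishing only the upper bound $\int_0^\infty \|Q_t^B u\|^2 \, dt/t \lesssim \|u\|^2$ on $\close{\ran(\Pi_B)}$ together with its analog for $\adj{\Pi_B}$, the latter being supplied by the three symmetric hypotheses. I would first record the structural consequences of (H1)-(H3) from \S4 of \cite{AKMC}: $\Pi_B$ is bisectorial of angle strictly less than $\pi/2$, the operators $R_t^B, P_t^B, Q_t^B, \Theta_t^B$ are uniformly bounded in $t$, one has $\Pi_B^2 = \Gamma\adj{\Gamma}_B + \adj{\Gamma}_B\Gamma$, and $\Hil$ admits a topological Hodge-type splitting
$$\Hil = \nul(\Pi_B) \dsum \close{\ran(\Gamma)} \dsum \close{\ran(\adj{\Gamma}_B)},$$
with $Q_t^B$ vanishing on $\nul(\Pi_B)$. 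This reduces the estimate to each of the range summands separately.

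For $u \in \close{\ran(\Gamma)}$, closedness and nilpotency of $\Gamma$ give $\Gamma u = 0$; the invariance of $\close{\ran(\Gamma)}$ under $P_t^B$ (a consequence of the algebraic form of $\Pi_B^2$) then forces $\Gamma P_t^B u = 0$, whence $Q_t^B u = \Theta_t^B u$. I would split
$$\Theta_t^B u = \Theta_t^B P_t u + \Theta_t^B (I - P_t) u,$$
handling the first term directly from the hypothesis. For the second, I would exploit the resolvent identity $(I - P_t) u = t\Pi Q_t u$, use $Q_t u \in \close{\ran(\adj{\Gamma})} \subset \nul(\adj{\Gamma})$ (so that $\Pi Q_t u = \Gamma Q_t u$), and absorb the remainder via the uniform bound on $\Theta_t^B$ together with the quadratic estimate $\int_0^\infty \|Q_t u\|^2 \, dt/t \simeq \|u\|^2$, which is automatic for the self-adjoint $\Pi$.

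The symmetric case $u \in \close{\ran(\adj{\Gamma}_B)}$ is treated in parallel with $Q_t^B u = t\Gamma P_t^B u$, controlled using the hypothesis variant in which $(\Gamma, B_1, B_2)$ is replaced by $(\adj{\Gamma}, B_2, B_1)$. The reverse inequality $\|u\|^2 \lesssim \int_0^\infty \|Q_t^B u\|^2 \, dt/t$ is obtained from the upper bound for $\adj{\Pi_B}$ (guaranteed by the $(\adj{\Gamma}, \adj{B_2}, \adj{B_1})$ variant) combined with the standard duality argument pairing $Q_t^B u$ with $(Q_t^B)^\ast v$ and integrating in $t$ on $\close{\ran(\Pi_B)}$.

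The hard step is the control of the comparison term $\Theta_t^B (I - P_t) u$: this is precisely where the perturbed operator $\Theta_t^B$ (built from $\Pi_B$) must interact well with the spectral projections of the \emph{self-adjoint} $\Pi$, even though $B_1, B_2$ in general commute with neither $\Gamma$ nor $\adj{\Gamma}$. The invariance assumptions in (H3), namely $B_1 B_2(\ran(\Gamma)) \subset \nul(\Gamma)$ and $B_2 B_1(\ran(\adj{\Gamma})) \subset \nul(\adj{\Gamma})$, are used crucially at this point, since they are exactly what guarantees both the clean form of $\Pi_B^2$ above and the invariance of the splitting under $P_t^B$ that allows the error term to be reorganised into a quantity controlled by the self-adjoint quadratic estimate for $\Pi$.
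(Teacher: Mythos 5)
This statement is Proposition~4.8 of \cite{AKMC}; the paper quotes it without proof, so there is no in-paper argument to compare against. Your reconstruction follows the same overall architecture as the argument in \S4 of \cite{AKMC}: reduce via McIntosh's theorem and duality to a one-sided estimate, use the topological Hodge splitting $\Hil = \nul(\Pi_B) \dsum \close{\ran(\Gamma)} \dsum \close{\ran(\adj{\Gamma}_B)}$ to consider $u \in \close{\ran(\Gamma)}$, observe $Q_t^B u = \Theta_t^B u$ there, and split $\Theta_t^B u$ into $\Theta_t^B P_t u$ (the hypothesis) plus an error $\Theta_t^B(I - P_t)u$. The structural facts you invoke --- $\Pi_B^2 = \Gamma\adj{\Gamma}_B + \adj{\Gamma}_B\Gamma$, commutation of $P_t^B$ with $\Gamma$, nilpotency forcing $\Gamma u = 0$ on $\close{\ran(\Gamma)}$ --- are all correct consequences of (H1)--(H3).

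One step is stated too loosely to close the argument as written. You reduce the error to $\Theta_t^B t\Gamma Q_t u$ (correctly, via $(I-P_t)u = t\Pi Q_t u$ and $\Pi Q_t u = \Gamma Q_t u$ on $\close{\ran(\adj{\Gamma})}$), and then say you ``absorb the remainder via the uniform bound on $\Theta_t^B$ together with the quadratic estimate for $Q_t$.'' But the uniform bound on $\Theta_t^B$ alone gives only $\|\Theta_t^B(I - P_t)u\| \lesssim \|(I-P_t)u\|$, and $\int_0^\infty \|(I-P_t)u\|^2\,dt/t$ diverges (for nonzero $u \in \close{\ran(\Pi)}$, $(I-P_t)u \to u$ as $t \to \infty$). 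What you actually need is the uniform boundedness of the composite $\Theta_t^B\,t\Gamma = t^2\adj{\Gamma}_B\Gamma\,P_t^B$ on $\dom(\Gamma)$, which is a separate fact (established in \S4 of \cite{AKMC} from the decomposition $I - P_t^B = t^2\Gamma\adj{\Gamma}_B P_t^B + t^2\adj{\Gamma}_B\Gamma P_t^B$ and the boundedness of each summand). Only with that do you get $\|\Theta_t^B(I-P_t)u\| \lesssim \|Q_t u\|$ and hence absorption by the self-adjoint quadratic estimate. Similarly, for the second range summand, the estimate for $t\Gamma P_t^B$ does not come straight from the hypothesis variant for $(\adj{\Gamma}, B_2, B_1)$: the operator $\underline{\Pi}_B = \adj{\Gamma} + B_2\Gamma B_1$ is not $\Pi_B$, and one must first pass through the similarity $B_2\Pi_B = \underline{\Pi}_B B_2$ on the relevant subspaces to transfer the estimate. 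Neither gap is fatal --- both are supplied in \S4 of \cite{AKMC} --- but they should be named rather than elided.
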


For a fuller treatment of the theory 
of sectorial operators and
holomorphic functional calculi,
see \cite{ADMc} by Albrecht, Duong and McIntosh, and 
\cite{Kato} by Kato.
Furthermore, Morris deals with 
local quadratic estimates
and their functional calculus
implications in \cite{MorrisLocal}.

It is the conclusion of
the above proposition that
is our primary objective.
We note 
as do the authors
of \cite{AKMC} that we require additional assumptions
on $\Spa$ and $(\Gamma, B_1,B_2)$ 
in order to satisfy the hypothesis of the proposition.
Thus, we start with the following definition.
\begin{definition}[Doubling measure]
We say that $\mu$ is a doubling measure on $\Spa$
if there exists a constant $C_D \geq 1$ such that
for all $x \in \Spa$ and $r > 0$, 
$$
0 < \mu(B(x,2r)) \leq C_D \mu(B(x,r)) < \infty.$$
We call $C_D$ the doubling constant and
we let $\p = \log_2(C_D)$.
\end{definition}

It is, in fact, easy to show
that a measure is doubling if and only if
$\mu(B(x,\kappa r)) \leq C_D \kappa^p \mu(B(x,r))$
whenever $\kappa > 1$.

We are now in a position
to list (H4) and (H5).
 
\begin{enumerate}
\item[(H4)] 
	Let $\Spa$ be a complete, connected
	metric space 
	and $\mu$ a  Borel-regular 
	measure on $\Spa$
	that is doubling.
	Then set $\Hil = \Lp{2}(\Spa, \C^N; d\mu)$.

\item[(H5)] $B_i \in L^\infty(\Spa, \bddlf(\C^N))$
	for $i = 1,2$.
\end{enumerate}

For convenience, we write $\Hil = \Lp{2}(\Spa)$
or $\Lp{2}(\Spa,\C^N)$.

Note
that the two hypotheses
above are the obvious
adaptations of (H4) and
(H5) in \cite{AKMC}.
The matter of (H6) is a
little more complicated
since (H6) of \cite{AKMC} and \cite{Morris}
involves $\grad$ which
in general does not exist for us.
To circumvent this obstacle,
we define the following
quantity.
\begin{definition}[Pointwise Lipschitz constant]
\index{Pointwise Lipschitz constant}
For $\xi: \Spa \to \C^N$ Lipschitz, define
$\Lipp \xi: \Spa \to \R$ by
$$\Lipp \xi(x) = \limsup_{y \to x} \frac{\modulus{\xi(x) - \xi(y)}}{d(x,y)}.$$
We take the convention that
$\Lipp \xi (x) = 0$ when $x$ is an isolated point.
\end{definition}

Letting $\Lip \xi$ denote the
\emph{Lipschitz constant} of $\xi$, we
note that by construction,
$\Lipp \xi(x) \leq \Lip \xi$
for all $x \in \Spa$. 
Also, $\Lipp \xi$ is a Borel function
and therefore measurable.
Many of the properties of $\Lipp\xi$ 
are described in greater detail 
in \cite{Cheeger}. We note that 
it is from this reference that
we 
have borrowed this notation and the term 
``pointwise Lipschitz constant.''
 
\begin{enumerate}
\item[(H6)]
	For every bounded Lipschitz function $\xi: \Spa \to \C$,
	multiplication by $\xi$ preserves
	$\dom(\Gamma)$ and 
	$\Mul_{\xi} = [\Gamma, \xi I]$
	is a multiplication operator.
	Furthermore, there exists a constant $m > 0$ such that
	$\modulus{\Mul_{\xi} (x)} \leq m \modulus{\Lipp{\xi}(x)}$
	for almost all $x \in \Spa$.
\end{enumerate}

We note that this implies
the same hypothesis when $\Gamma$
is replaced by $\adj{\Gamma}$ and $\Pi$.
This observation is made 
in  \cite{Morris}
and originated in \cite{AKM2}.

When
$\Spa = \R^n$ and $\mu$
is the Lebesgue measure
(the setting in \cite{AKMC}),
our (H6) is automatically
satisfied since  
$\modulus{\grad \xi(x)} = \modulus{\Lipp \xi(x)}$
for almost all $x \in \R^n$.

The following is called the \emph{cancellation hypothesis}.
In the work of \cite{Morris} and \cite{AKM2},
this hypothesis is replaced by a weaker estimate
which is applicable for local quadratic estimates
as described by Morris in \cite{MorrisLocal}. The
estimates we require are global and thus
we assume the cancellation hypothesis in \cite{AKMC}.
We denote the \emph{support} of a function $f$
by $\spt f$.

\begin{enumerate}
\item[(H7)]
	For each open ball $B$, we have
	$$ \int_{B} \Gamma u\ d\mu = 0 
	\quad\text{and}\quad
	\int_{B} \adj{\Gamma} v\ d\mu = 0$$ 
	for all $ u \in \dom(\Gamma)$ with $\spt u \subset B$ 
	and for all $v  \in \dom(\adj{\Gamma})$ with $\spt v \subset B$.
\end{enumerate}

The last assumption is a \emph{\Poincare hypothesis}.
In \cite{Morris}, a \Poincare inequality on balls is assumed
as a separate hypothesis. Their (H8) is
a coercivity assumption
following \cite{AKMC}.
In our work, we find that
a \Poincare type hypothesis
with respect to the unperturbed
operator $\Pi$
is a sensible substitution. 

\begin{enumerate}
\item[(H8)]
There exists $C' > 0$ and $c > 0$ 
such that for all balls $B=B(y,r)$ 
$$
\int_{B} \modulus{u(x) - u_B}^2\ d\mu(x)
	\leq C' r^2 \int_{cB} \modulus{\Pi u(x)}^2\ d\mu(x)$$
for all $ u \in \ran(\Pi) \intersect \dom(\Pi)$.
\end{enumerate}

The authors of \cite{AKMC}
reveal that (H1)-(H3)
are adequate to set up the necessary operator theoretic framework.
However, as we have 
noted before, the full set of assumptions (H1)-(H8)
are necessary to obtain the desired estimates.
It is under these assumptions that we present
the main theorem of this paper.
\begin{theorem}
\label{Thm:HypRes:Main}
Let $\Spa$, $(\Gamma,B_1,B_2)$
satisfy (H1)-(H8). Then,
$\Pi_B$ satisfies the quadratic
estimate
$$
\int_{0}^\infty \norm{Q_t^B u}^2\ \frac{dt}{t}
	\simeq \norm{u}^2$$ 
for all $u \in \close{\ran(\Pi_B)} \subset \Lp{2}(\Spa,\C^N)$
and hence has a bounded $H^\infty$ functional 
calculus.
\end{theorem}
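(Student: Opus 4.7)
The plan is to appeal to Proposition~\ref{Prop:HypRes:Main}: it suffices to establish the four square function estimates
$$ \int_{0}^\infty \norm{\Theta_t^B P_t u}^2\ \frac{dt}{t} \leq c \norm{u}^2 \quad \text{for all } u \in \ran(\Gamma),$$
along with the three symmetric variants obtained by swapping $(\Gamma, B_1, B_2)$ with $(\adj{\Gamma},B_2,B_1)$, $(\adj{\Gamma},\adj{B_2},\adj{B_1})$, and $(\Gamma,\adj{B_1},\adj{B_2})$. Since the hypotheses (H1)--(H8) are symmetric under these exchanges (using the remark following (H6)), I would focus on the first estimate and recover the others \emph{mutatis mutandis}.

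Following the architecture of \cite{AKMC} and \cite{Morris}, I would split $\Theta_t^B P_t u$ into a principal part piece and an error. Define, for each $t>0$, the bounded multiplication operator $\gamma_t(x) w := (\Theta_t^B w)(x)$ acting on constants $w \in \C^N$, and consider the decomposition
$$ \Theta_t^B P_t u = \gamma_t \Av_t P_t u + (\Theta_t^B - \gamma_t \Av_t) P_t u + \gamma_t (\Av_t P_t u - \Av_t u) + \gamma_t \Av_t u,$$
where $\Av_t$ is a dyadic averaging operator associated to the Christ decomposition from \S\ref{Dya}. The first three terms (which I'll lump as the error) are controlled by off-diagonal estimates for $\Theta_t^B$, combined with the cancellation hypothesis (H7), the Poincar\'e-type hypothesis (H8), and the maximal function and tent-space machinery of \S\ref{MaxCarl}; this reduction essentially follows \cite{Morris} once one checks that the doubling condition and dyadic structure suffice in place of the Euclidean structure. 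The estimate of the error term reduces to showing that these operators have the right $\Lp{2}\to\Lp{2}$ decay and cancellation on $\ran(\Gamma)$.

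The remaining (main) contribution is the ``Carleson'' piece $\gamma_t \Av_t u$, and the task is to show that the measure
$$ d\nu(x,t) := \modulus{\gamma_t(x)}^2\ \frac{d\mu(x)\, dt}{t}$$
is a Carleson measure on $\Spa \times (0,\infty)$. By the John--Nirenberg/Carleson lemma adapted to doubling metric measure spaces (developed in \S\ref{MaxCarl}), this reduces to the local $T(b)$-style stopping time argument of \cite{AKMC}: for each dyadic cube $Q$ one constructs test functions $f_Q^w \in \dom(\Pi_B)$ indexed by a finite set of unit vectors $w \in \C^N$, and decomposes $Q$ into a good set where $\Av_{\ell(Q)} f_Q^w$ is close to $w$ plus a small exceptional set arising from a stopping-time construction. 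The accretivity hypothesis (H2), the nilpotence (H1), and the containment (H3) ensure that these test functions have the required non-degeneracy.

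The main obstacle --- and the novelty of this paper --- will be executing the principal part reduction without a gradient. In \cite{AKMC} and \cite{Morris}, one exploits the bound $\modulus{[\Pi,\eta I]} \lesssim \modulus{\grad \eta}$ for Lipschitz cut-offs $\eta$, which yields the crucial commutator bound needed both in the off-diagonal estimates for $\Theta_t^B$ and in the Poincar\'e step that controls $\|\Av_t P_t u - \Av_t u\|$. Here I would instead feed hypothesis (H6) into these arguments: the multiplier $\Mul_\eta = [\Gamma, \eta I]$ is controlled pointwise by $\Lipp \eta$, and since $\Lipp \eta \leq \Lip \eta$, the usual Lipschitz cut-offs supported on scale $t$ have $\Lipp \eta \lesssim 1/t$ almost everywhere. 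Combining this with (H8), which provides a Poincar\'e inequality in terms of $\Pi$ rather than any ambient gradient, should furnish all the quantitative estimates in the principal part and Carleson reductions. The technical crux will be verifying that every use of $\grad$ in \cite{AKMC,Morris} can be replaced by this Lipschitz-commutator framework without losing constants depending on dimension or curvature.
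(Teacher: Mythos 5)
Your proposal follows essentially the same route as the paper: reduce via Proposition~\ref{Prop:HypRes:Main} to the square function estimate for $\Theta_t^B P_t$ on $\ran(\Gamma)$, split it using the dyadic averaging operator $\Av_t$ and the principal part, control the error pieces via the off-diagonal estimates together with (H6)--(H8) and Lipschitz cutoffs in place of gradients, and handle the main term by a $T(b)$-style Carleson measure argument with test functions. Note a small algebraic slip in your displayed decomposition: the leading summand $\gamma_t \Av_t P_t u$ is spurious --- as written the right-hand side equals $\Theta_t^B P_t u + \gamma_t \Av_t P_t u$ --- but dropping it yields the correct telescoping identity matching the paper's three-term split.
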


Let $E_B^{\pm} = \chi^{\pm}(\Pi_B)$, where
$\chi^{+}(\zeta) = 1$ when $\re(\zeta) > 0$
and $0$ otherwise, and similarly,
$\chi^{-}(\zeta) = 1$ when $\re(\zeta) < 0$
and $0$ otherwise.
We have the following
corollary resembling Corollary 2.11 in \cite{AKMC}.

\begin{corollary}[Kato square root type estimate]
{ \hfill }
\begin{enumerate}[(i)]
\item There is a spectral decomposition
	$$\Lp{2}(\Spa,\C^N) = \nul(\Pi_B) \oplus E_B^{+} \oplus E_B^{-}$$
	(where the sum is in general non-orthogonal), and
\item 	$\dom(\Gamma) \intersect \dom(\adj{\Gamma}_B) = \dom(\Pi_B) = \dom(\sqrt{\Pi_B^2})$
	with
	$$\norm{\Gamma u} + \norm{\Gamma_B u} 
		\simeq \norm{\Pi_B u} 	
		\simeq \norm{\sqrt{\Pi_B^2}u}$$
	for all $u \in \dom(\Pi_B)$.
\end{enumerate} 
\end{corollary}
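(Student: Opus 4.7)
The plan is to derive both statements as formal consequences of the bounded $H^\infty$ functional calculus for $\Pi_B$ established in \thmref{Thm:HypRes:Main}, combined with the Hodge-type decomposition that is already available from hypotheses (H1)--(H3) alone.

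First, I would recall from \S4 of \cite{AKMC} that (H1)--(H3) yield the (in general non-orthogonal) topological splitting
$$\Lp{2}(\Spa,\C^N) = \nul(\Pi_B) \dsum \close{\ran(\Gamma)} \dsum \close{\ran(\adj{\Gamma}_B)},$$
with $\close{\ran(\Pi_B)} = \close{\ran(\Gamma)} \dsum \close{\ran(\adj{\Gamma}_B)}$, and that $\Pi_B$ is bisectorial on $\close{\ran(\Pi_B)}$. Applying the $H^\infty$ calculus to the bounded holomorphic functions $\chi^{\pm}$ on a bisector produces operators $E_B^{\pm} = \chi^{\pm}(\Pi_B)$, which I would extend by zero on $\nul(\Pi_B)$. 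Since $\chi^{+} + \chi^{-} \equiv 1$ and $\chi^{+} \chi^{-} \equiv 0$ away from the origin, these yield complementary bounded projectors on $\close{\ran(\Pi_B)}$, and combining with the nullspace summand establishes part (i).

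For part (ii), the equality $\dom(\Pi_B) = \dom(\Gamma) \intersect \dom(\adj{\Gamma}_B)$ is immediate from $\Pi_B = \Gamma + \adj{\Gamma}_B$ once one observes, via the Hodge splitting, that $\Gamma u$ and $\adj{\Gamma}_B u$ live in complementary closed subspaces, so that their sum in $\Lp{2}$ controls each summand. To compare $\dom(\Pi_B)$ with $\dom(\sqrt{\Pi_B^2})$, I would apply the functional calculus to $\mathrm{sgn}(\zeta) := \chi^{+}(\zeta) - \chi^{-}(\zeta)$, which is bounded and holomorphic on the bisector with $\mathrm{sgn}(\zeta)^2 \equiv 1$ away from zero, and note the pointwise identity $\sqrt{\zeta^2} = \mathrm{sgn}(\zeta)\,\zeta$. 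Hence $\sqrt{\Pi_B^2} = \mathrm{sgn}(\Pi_B)\,\Pi_B$ with $\mathrm{sgn}(\Pi_B)$ bounded and its own inverse on $\close{\ran(\Pi_B)}$, which yields both the domain equality and the norm equivalence $\norm{\sqrt{\Pi_B^2}u} \simeq \norm{\Pi_B u}$. The remaining equivalence $\norm{\Gamma u} + \norm{\adj{\Gamma}_B u} \simeq \norm{\Pi_B u}$ then follows from the triangle inequality in one direction and from the boundedness of the Hodge projectors onto $\close{\ran(\Gamma)}$ and $\close{\ran(\adj{\Gamma}_B)}$ in the other.

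All of the substantive analysis is already encoded in \thmref{Thm:HypRes:Main}; the corollary is essentially a dictionary translation of ``bounded $H^\infty$ calculus'' into concrete domain and norm statements. The only genuine obstacle is mild bookkeeping: one must take care to extend the spectral projectors $E_B^{\pm}$ by zero on $\nul(\Pi_B)$ so that the decomposition in (i) genuinely covers all of $\Lp{2}(\Spa,\C^N)$ rather than only $\close{\ran(\Pi_B)}$, and to verify that the abstract sectoriality arguments which produce $\mathrm{sgn}(\Pi_B)$ on $\close{\ran(\Pi_B)}$ restrict compatibly to each of the summands $\close{\ran(\Gamma)}$ and $\close{\ran(\adj{\Gamma}_B)}$ provided by the Hodge splitting.
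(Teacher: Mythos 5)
Your proposal is correct and takes essentially the same route the paper implicitly relies on: the paper gives no proof of its own, deferring to Corollary 2.11 of \cite{AKMC}, and that argument is precisely the translation of the bounded $H^\infty$ functional calculus into domain and norm statements via the Hodge-type splitting from (H1)--(H3), the projectors $\chi^{\pm}(\Pi_B)$, and the identity $\sqrt{\zeta^2}=\mathrm{sgn}(\zeta)\,\zeta$ with $\mathrm{sgn}=\chi^{+}-\chi^{-}$. Your bookkeeping remarks (extending $E_B^{\pm}$ by zero on $\nul(\Pi_B)$, and using the bounded Hodge projectors to recover $\norm{\Gamma u}+\norm{\adj{\Gamma}_Bu}\lesssim\norm{\Pi_Bu}$) are exactly the points one needs to make explicit.
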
 
\section{Abstract dyadic decomposition}
\label{Dya}

We begin this section by
quoting Theorem 11 in \cite{Christ}.

\begin{theorem}
\label{Thm:Dya:Christ}
There exists a countable collection of open subsets
$$\set{\Q[\alpha]^k \subset \Spa: k \in \In, \alpha \in I_k}$$
with each $z_\alpha^k \in \Q[\alpha]^k$, 
where $I_k$ are index sets (possibly finite),
and constants $\delta \in (0,1)$, 
$a_0 > 0$, $\eta > 0$ and 
$C_1, C_2 < \infty$ satisfying:
\begin{enumerate}[(i)]
\item For all $k \in \In$,
	$\mu(\Spa \setminus \union_{\alpha} \Q[\alpha]^k) = 0$,
\item If $l \geq k$, either $\Q[\beta]^l \subset \Q[\alpha]^k$
	or $\Q[\beta]^l \intersect \Q[\alpha]^k = \emptyset$,
\item For each $(k,\alpha)$ and each $l < k$
	there exists a unique $\beta$ such that
	$\Q[\alpha]^k \subset \Q[\beta]^l$,
\item $\diam \Q[\alpha]^k \leq C_1 \delta^k$,
\item $ B(z_\alpha^k, a_0 \delta^k) \subset \Q[\alpha]^k$, 
\item For all $k, \alpha$ and for all $t > 0$, 
	$\mu\set{x \in \Q[\alpha]^k: d(x, \Spa\setminus\Q[\alpha]^k) \leq t \delta^k} \leq C_2 t^\eta \mu(\Q[\alpha]^k).$
\end{enumerate} 
\end{theorem}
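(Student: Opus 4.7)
The plan is to imitate Michael Christ's original construction and verify the six properties in turn, with the geometric small-boundary condition (vi) being the main technical obstacle. Because $\mu$ is doubling and $\Spa$ is separable (all balls have finite positive measure), every $\delta^k$-separated subset is at most countable; so at each scale $k \in \In$ I extract, by Zorn's lemma, a maximal $\delta^k$-separated set $\set{z_\alpha^k : \alpha \in I_k}$ with $I_k$ countable. Maximality forces $\bigcup_\alpha B(z_\alpha^k, \delta^k) = \Spa$, while separation gives that the balls $B(z_\alpha^k, \delta^k/2)$ are pairwise disjoint. I fix $\delta \in (0,1)$ small depending on the doubling constant $C_D$, to be calibrated below.

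Next I build the ancestor structure. For each $(k,\alpha)$ I select a parent index $p(k,\alpha) \in I_{k-1}$ minimizing $d(z_\alpha^k, z_\beta^{k-1})$; by maximality at the coarser scale this distance is at most $\delta^{k-1}$. Iterating $p$ gives a tree of centers, and I define a preliminary (Borel) partition by assigning each $x \in \Spa$ at scale $k$ to the center $z_\alpha^k$ determined by following the ancestor chain of any nearest fine-scale center to $x$; ties are broken by a fixed well-ordering of $\bigsqcup_k I_k$. Call the resulting sets $\tilde{\Q[\alpha]^k}$, and let $\Q[\alpha]^k$ be their interiors. Properties (i), (ii), (iii) are then built into the construction: (i) because each $\tilde{\Q[\alpha]^k}$ is contained in a countable union of the nets and $\partial \tilde{\Q[\alpha]^k}$ will have measure zero by (vi); (ii) because the fine-scale assignment respects the parent map; and (iii) because each fine-scale center has a unique ancestor at every coarser level.

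The diameter bound (iv) is proved by summing the geometric series: any $x \in \Q[\alpha]^k$ can be approximated at arbitrarily fine scales by a descendant chain whose centers satisfy $d(z_{\alpha_{l+1}}^{l+1}, z_{\alpha_l}^l) \leq \delta^l$, giving $d(x, z_\alpha^k) \leq \sum_{l \geq k} \delta^l = \delta^k/(1-\delta)$. For the inner ball (v), I must choose $\delta$ small enough (depending on $C_D$) that a ball $B(z_\alpha^k, a_0 \delta^k)$ cannot reach any competing center whose ancestor chain departs from $(k,\alpha)$; a pigeonhole using doubling bounds the number of competing centers in a fixed neighborhood, so $a_0$ can be made positive uniformly.

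The hard part will be (vi), the small-boundary estimate. The strategy is an inductive scale-by-scale argument: I show that there is a constant $\theta < 1$ and an integer $J$ such that at least a fraction $1-\theta$ of the mass of $\Q[\alpha]^k$ lies in the sub-collection of descendants $\Q[\beta]^{k+J}$ whose concentric enlargement by a controlled factor still sits inside $\Q[\alpha]^k$. Iterating this over multiples of $J$ and invoking doubling to convert sub-cube measure to ball measure yields a decay $\mu\set{x \in \Q[\alpha]^k : d(x,\Spa\setminus \Q[\alpha]^k) \leq t\delta^k} \leq C_2 t^\eta \mu(\Q[\alpha]^k)$ for all $t \leq 1$, where $\eta = -\log \theta / J |\log \delta|$. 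The technical heart is verifying the one-step fraction $1-\theta$: a descendant sub-cube $\Q[\beta]^{k+J}$ fails to have its enlargement inside $\Q[\alpha]^k$ only if $z_\beta^{k+J}$ lies within $C\delta^{k+J}$ of $\partial \tilde{\Q[\alpha]^k}$, and a covering-plus-doubling argument bounds the number of such $\beta$ relative to all descendants at level $k+J$. Once (vi) is established at the level of the Borel sets $\tilde{\Q[\alpha]^k}$, it transfers to the open sets $\Q[\alpha]^k$ because $\mu(\partial \tilde{\Q[\alpha]^k})=0$, completing all six claims.
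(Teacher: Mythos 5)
The paper does not prove this statement: it is quoted verbatim as Theorem 11 of Christ \cite{Christ}, and the text treats it as an imported black box, so there is no proof here to compare your attempt against. Your reconstruction follows the broad shape of Christ's argument, but the cube definition as you give it has a genuine gap. You assign $x \in \Spa$ to its scale-$k$ cube by ``following the ancestor chain of any nearest fine-scale center to $x$.'' That rule is not well-defined: if $z_{\gamma_m}^m$ is a nearest net point to $x$ at scale $m$, its parent was chosen to be a nearest net point at scale $m-1$ to $z_{\gamma_m}^m$, not to $x$, so the parent of the scale-$m$ nearest center need not be the scale-$(m-1)$ nearest center. Consequently the chain obtained by lifting from level $m$ can terminate at a different $(k,\alpha)$ than the chain obtained by lifting from level $m'$, and there is no reason for the outcome to stabilize as $m\to\infty$. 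Your ``fixed well-ordering'' only breaks ties among equidistant centers at one fixed scale; it does nothing for this cross-scale ambiguity. Christ avoids the problem by defining the (closed) cube intrinsically from the tree of centers --- roughly, as the closure of the set of descendant centers $\set{z_\beta^l : (l,\beta)\preceq(k,\alpha),\ l\geq k}$ --- and then deducing the covering, nesting, and diameter estimates directly from the transitive parent relation rather than from limits of ``nearest points.'' You need a definition of that kind before the verifications of (ii), (iii), (iv) that you sketch can be carried out.

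A secondary but substantial issue is that the small-boundary estimate (vi), which is the real technical content of the theorem, is asserted rather than argued. The one-step claim --- that a fixed fraction $1-\theta$ of the mass of $\Q[\alpha]^k$ lies in those level-$(k+J)$ descendants whose uniform enlargement remains inside $\Q[\alpha]^k$ --- is exactly where (iv), (v), the doubling inequality, and a covering argument all have to be combined, and as written it is the statement of a lemma, not its proof. (The subsequent geometric iteration and the passage to the open cubes via $\mu(\partial\tilde\Q[\alpha]^k)=0$ are fine once the one-step lemma is in hand.) Since the paper itself simply cites \cite{Christ}, the appropriate course here is to do the same; if you do wish to include a proof, base the cube definition on Christ's closure-of-descendants construction and flesh out the Whitney-type covering argument behind (vi).
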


Define $\DyQ^k = \set{\Q[\alpha]^k: \alpha \in I_k}$
to be the \emph{level} $k$ dyadic cubes
and $\DyQ = \union_{k} \DyQ^k$
to be the collection of dyadic cubes.
For $\Q[\alpha]^k \in \DyQ^k$, define
the \emph{length} as
$\len(\Q[\alpha]^k) = \delta^k$ and
the \emph{centre} as $z_\alpha^k$.

It is easy to see that each $\DyQ^k$ is
a mutually disjoint collection. Furthermore,
we have $\bnd (\union \DyQ^k) = \union_{Q \in \DyQ^k} \bnd Q$.
These facts coupled with the assumption $\mu(B(x,r)) > 0$ implies that
$\Spa = \close{\union \DyQ^k}$.

Fix a cube $\Q \in \DyQ^j$ and
denote the centre of this
cube by $z$. We are interested in 
counting the number of cubes
inside ``shells'' centred
from this cube.
We begin with the following
definition.
\begin{definition}
Whenever $k \geq 1$, define
$$\sC_{k} = \set{\Q[\alpha]^j \in \DyQ^{j}: (k-1) C_1\delta^j \leq d(z,z_{\alpha}^j) \leq kC_1\delta^j}.$$
Also, let $\tilde{\sC}_k = \set{\Q[\alpha]^j \in \DyQ^j: d(z, z_\alpha^j) \leq kC_1 \delta^j}$.
\end{definition}

It is easy to see that $\DyQ^j = \union_{k\geq1} \sC_k$.
We compute a bound for $\card \sC_k$
(where $\card S$ denotes the \emph{cardinality}
of a set $S$).
First, we have the following proposition
describing the distance of points in $\union \sC_k$
to $z$.
\begin{proposition}
\label{Prop:Dya:Est}
Let $\Q[\alpha]^j \in \sC_k$. Then,
\begin{enumerate}[(i)]
\item $0 \leq d(z,x) \leq (k+1)C_1\delta^j$ for all $x \in \Q[\alpha]^j$ when $k \leq 2$, and 
\item $\frac{1}{3}k C_1\delta^j \leq d(z,x) \leq (k+1)C_1\delta^j$ for all $x \in \Q[\alpha]^j$ when $k \geq 3$.
\end{enumerate}
\end{proposition}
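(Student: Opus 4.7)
The plan is to extract both bounds directly from the triangle inequality combined with property (iv) of \thmref{Thm:Dya:Christ}, namely $\diam \Q[\alpha]^j \leq C_1 \delta^j$. Since every point $x \in \Q[\alpha]^j$ satisfies $d(z_\alpha^j, x) \leq C_1 \delta^j$, and since membership in $\sC_k$ forces $(k-1)C_1\delta^j \leq d(z, z_\alpha^j) \leq kC_1\delta^j$, the proof reduces to two elementary inequalities.

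For the upper bound in both (i) and (ii), I would apply the triangle inequality in the form $d(z,x) \leq d(z, z_\alpha^j) + d(z_\alpha^j, x)$ and estimate each term, obtaining $d(z,x) \leq k C_1 \delta^j + C_1 \delta^j = (k+1) C_1 \delta^j$. This bound is uniform in $k \geq 1$ and works for both cases. The lower bound in (i) is automatic since distances are non-negative, so no argument is needed there; the content of (i) is merely that the upper bound holds for small $k$.

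For (ii), the lower bound requires the reverse triangle inequality $d(z,x) \geq d(z, z_\alpha^j) - d(z_\alpha^j, x) \geq (k-1)C_1 \delta^j - C_1 \delta^j = (k-2) C_1 \delta^j$. The remaining step is an arithmetic comparison: I would verify that $(k-2) \geq \tfrac{1}{3} k$ exactly when $k \geq 3$, which is straightforward. This explains the threshold $k \geq 3$ in the statement and the particular constant $\tfrac{1}{3}$; any smaller $k$ would make $(k-2) C_1 \delta^j$ either zero or negative, rendering a nontrivial lower bound impossible without further geometric information, which is precisely why (i) and (ii) are split at this threshold.

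There is no serious obstacle here — the proposition is a bookkeeping lemma whose sole purpose is to quantify how far points in an annular shell $\sC_k$ sit from the fixed center $z$, in preparation for the counting estimate on $\card \sC_k$ that will follow via the doubling hypothesis. The only point to be careful about is the constant $\tfrac{1}{3}$, which is chosen specifically so that the lower bound becomes effective at $k=3$ rather than $k=2$, yielding a clean linear-in-$k$ estimate suitable for later summation.
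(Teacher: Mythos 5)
Your proposal is correct and follows essentially the same route as the paper: both the upper bound via the triangle inequality through $z_\alpha^j$, the lower bound $(k-2)C_1\delta^j \leq d(z,x)$ via the reverse triangle inequality, and the final arithmetic observation that $k-2 \geq \tfrac{1}{3}k$ precisely when $k \geq 3$ all appear in the paper's proof in the same form.
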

\begin{proof}
Fix $\Q[\alpha]^j \in \sC_k$
and fix $x \in \Q[\alpha]^j$. Then,
$$
d(x,z) \leq d(x,z_\alpha^j)  + d(z_\alpha^j, z)
	\leq \diam \Q[\alpha]^j + kC_1\delta^j
	\leq (k+1)C_1\delta^j.$$
Also,
$$(k-1)C_1\delta^j \leq d(z,z_\alpha^j) \leq d(x,z) + d(x,z_\alpha^j) \leq d(x,z) + C_1\delta^j.$$
Combining these two estimates
we have
$$ (k-2)C_1\delta^j \leq d(z,x) \leq (k+1)C_1\delta^j.$$
This gives us (i). To obtain (ii), note that
whenever $k \geq 3$ we have $\frac{1}{3}k \leq k - 2$.
\end{proof}

Next, we   
compare two balls
which are separated by an arbitrary distance.
In the following proposition 
(and indeed the rest of the paper), 
let us fix $p = \log_2(C_D)$, where
$C_D$ is the doubling constant.
\begin{proposition}
\label{Prop:Dya:BallCompare}
Fix balls $B(x,r), B(y,r) \subset \Spa$. Then,
for all $\epsilon > 0$, 
\begin{align*}
2^{-p} \cbrac{\frac{d(x,y) + r + \epsilon}{r}}^{-p} \mu (B(y,r)) &\leq \mu (B(x,r))\\
&\leq 2^p \cbrac{\frac{d(x,y) + r + \epsilon}{r}}^p \mu (B(y,r)).
\end{align*}
\end{proposition}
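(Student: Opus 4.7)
The plan is to prove the upper bound by comparing $B(x,r)$ to a larger ball centred at $y$ via the triangle inequality, then apply the scaling consequence of doubling already noted in the text, namely $\mu(B(y,\kappa r)) \leq C_D \kappa^p \mu(B(y,r))$ for $\kappa > 1$. The lower bound will follow immediately by interchanging the roles of $x$ and $y$, so the whole proposition is essentially one estimate done twice.

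For the upper bound, first observe the elementary inclusion $B(x,r) \subseteq B(y, d(x,y)+r+\epsilon)$. Indeed, for any $z \in B(x,r)$ the triangle inequality gives $d(z,y) \leq d(z,x)+d(x,y) < r + d(x,y) \leq d(x,y)+r+\epsilon$. Setting $\kappa = (d(x,y)+r+\epsilon)/r$, this inclusion yields
$$ \mu(B(x,r)) \leq \mu(B(y, \kappa r)).$$
If $\kappa > 1$ — which holds whenever $d(x,y) > 0$ or $\epsilon > 0$ — then the doubling consequence recorded just before the proposition gives $\mu(B(y,\kappa r)) \leq C_D \kappa^p \mu(B(y,r))$, and since $C_D = 2^{\log_2 C_D} = 2^p$ by definition of $p$, this is precisely the claimed upper bound. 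The degenerate case $\kappa = 1$ (which forces $x = y$ and $\epsilon = 0$) is trivial, as both sides coincide.

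The lower bound is obtained by symmetry: applying the upper bound with the roles of $x$ and $y$ swapped gives
$$ \mu(B(y,r)) \leq 2^p \left( \frac{d(x,y)+r+\epsilon}{r}\right)^p \mu(B(x,r)),$$
and dividing through by the prefactor rearranges to the desired lower estimate.

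There is no real obstacle here — the proposition is a direct packaging of the scaling form of doubling together with the triangle inequality. The only minor subtlety is the presence of $\epsilon$, which is convenient to guarantee $\kappa > 1$ strictly (so that the doubling scaling inequality applies without fuss) and to avoid separately treating the case $x = y$; it plays no deeper role.
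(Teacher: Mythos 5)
Your proof is correct and follows essentially the same argument as the paper: use the triangle inequality to obtain the inclusion $B(x,r) \subset B(y, d(x,y)+r+\epsilon)$, apply the scaling form of the doubling property with $\kappa = (d(x,y)+r+\epsilon)/r$ and $C_D = 2^p$, and obtain the other bound by swapping the roles of $x$ and $y$. (One small remark: since the statement quantifies over $\epsilon > 0$, the factor $\kappa = (d(x,y)+r+\epsilon)/r$ is always strictly greater than $1$, so the degenerate case $\kappa = 1$ you mention never actually arises.)
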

\begin{proof}
Fix $\epsilon > 0$ and note that 
$$B(x,r), B(y,r) \subset B(x, d(x,y) + r + \epsilon), B(y, d(x,y) + r + \epsilon).$$
Therefore, 
\begin{align*}
\mu (B(y,r))
	&\leq \mu \cbrac{B\cbrac{x, \frac{d(x,y) + r + \epsilon}{r} r}}\\
	&\leq 2^p \cbrac{\frac{d(x,y) + r + \epsilon}{r}}^p \mu (B(x,r)).
	\end{align*}
Similarly, we have
\begin{align*}
\mu (B(x,r))
	&\leq \mu \cbrac{B\cbrac{y, \frac{d(x,y) + r + \epsilon}{r} r}}\\
	&\leq 2^p \cbrac{\frac{d(x,y) + r + \epsilon}{r}}^p \mu (B(y,r))
\end{align*}
which establishes the claim.
\end{proof}

We make a parenthetical remark
that our assumption $0 < \mu(B(x,r)) < \infty$ for all $x\in \Spa$
and $r > 0$ is not strong
since by the previous proposition, 
coupled with the doubling property,
allow us to recover this assumption
if we only required $0 < \mu(B(x_0,r_0)) < \infty$
to hold for some $x_0 \in \Spa$ and $r_0 > 0$.

We now return back to the problem
of estimating $\card \sC_k$.
The reader will observe that we
have been generous in our calculations.
\begin{proposition}
\label{Prop:Dya:CCount} 
We have $\card \tilde{\sC}_k \leq C k^{2p}$
where
$$C = 4^p  \cbrac{\frac{C_1 + 2 a_0}{a_0}}^p\cbrac{\frac{2C_1}{a_0}}^p.$$
In particular, $\card \sC_k \leq Ck^{2p}$.
\end{proposition}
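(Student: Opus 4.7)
The plan is to wedge $\card \tilde{\sC}_k$ between two multiples of $\mu(B(z, a_0 \delta^j))$ by exploiting that each level-$j$ dyadic cube has a fixed-radius inscribed ball and lies inside a controlled larger ball, and then to use the doubling property together with Proposition~\ref{Prop:Dya:BallCompare} to translate the resulting volume bounds into cardinality estimates.

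First, from the triangle inequality argument already carried out in the proof of Proposition~\ref{Prop:Dya:Est}, every $\Q[\alpha]^j \in \tilde{\sC}_k$ satisfies $\Q[\alpha]^j \subset B(z, (k+1)C_1 \delta^j)$; and by Theorem~\ref{Thm:Dya:Christ}(v), each contains the inscribed ball $B(z_\alpha^j, a_0 \delta^j)$. Since distinct level-$j$ cubes are mutually disjoint (as observed after Theorem~\ref{Thm:Dya:Christ}), so are these inscribed balls, and additivity of $\mu$ yields
$$\sum_{\Q[\alpha]^j \in \tilde{\sC}_k} \mu(B(z_\alpha^j, a_0 \delta^j)) \leq \mu(B(z, (k+1) C_1 \delta^j)).$$

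Next, I would normalise both sides against $\mu(B(z, a_0 \delta^j))$. For the right-hand side, the doubling consequence $\mu(B(x, \kappa r)) \leq C_D \kappa^p \mu(B(x, r))$ (noted immediately after the definition of doubling measure), applied with $\kappa = (k+1)C_1/a_0$ and combined with the crude estimate $k+1 \leq 2k$ valid for $k \geq 1$, gives the upper bound $2^p k^p (2C_1/a_0)^p \mu(B(z, a_0 \delta^j))$. For each term on the left, apply Proposition~\ref{Prop:Dya:BallCompare} with $x = z_\alpha^j$, $y = z$, $r = a_0 \delta^j$, and $\epsilon = a_0 \delta^j$; the hypothesis $d(z, z_\alpha^j) \leq k C_1 \delta^j$ together with $k C_1 + 2 a_0 \leq k(C_1 + 2 a_0)$ for $k \geq 1$ then produce
$$\mu(B(z_\alpha^j, a_0 \delta^j)) \geq \frac{1}{2^p k^p} \cbrac{\frac{a_0}{C_1 + 2a_0}}^p \mu(B(z, a_0 \delta^j)).$$

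Substituting both estimates into the disjointness inequality cancels the common factor $\mu(B(z, a_0 \delta^j)) > 0$ and leaves exactly the claimed bound $\card \tilde{\sC}_k \leq C k^{2p}$ with the stated $C$; the inequality for $\card \sC_k$ then follows from $\sC_k \subset \tilde{\sC}_k$. No step presents a real obstacle — the argument is essentially bookkeeping — and the only delicate point is the specific choice $\epsilon = a_0 \delta^j$ in Proposition~\ref{Prop:Dya:BallCompare}, tuned so that the factor $(C_1 + 2a_0)/a_0$ in the stated constant appears rather than $(C_1 + a_0)/a_0$.
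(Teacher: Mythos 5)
Your proof is correct and follows essentially the same route as the paper: compare disjoint inscribed balls $B(z_\alpha^j, a_0\delta^j)$ against the containing ball $B(z,(k+1)C_1\delta^j)$, lower-bound each inscribed ball's measure via Proposition~\ref{Prop:Dya:BallCompare} with $\epsilon = r = a_0\delta^j$, and upper-bound the big ball via doubling with $k+1 \leq 2k$. The only difference is presentational — you make the disjointness/additivity step explicit, which the paper leaves implicit.
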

\begin{proof}
Fix $k \geq 1$.
Set $\epsilon = r = a_0 \delta^j$ and then
$$
d(z,z_\alpha^j) + r + \epsilon 
	\leq k C_1 \delta^j + 2a_0 \delta^j
	\leq (C_1 + 2a_0) \delta^j k$$
when $\Q[\alpha]^j \in \tilde{\sC}_k$.
By Proposition \ref{Prop:Dya:BallCompare},
$$
2^{-p} \cbrac{\frac{C_1 + 2a_0}{a_0}}^{-p} k^{-p} \mu (B(z, a_0 \delta^j))
	\leq \mu (B(z_\alpha^k, a_0\delta^j)).$$

Now, note that by Proposition \ref{Prop:Dya:Est}, 
we have
$\sup_{x \in \Q[\alpha]^j} d(x,z) \leq (k+1)C_1\delta^j$
and so $\union\tilde{\sC}_k \subset B(z,(k+1)C_1\delta^j)$.
Then,
\begin{align*}
\mu (B(z, (k+1)C_1\delta^j ))
	&\leq 2^p \cbrac{\frac{(k+1)C_1}{a_0}}^p \mu (B(z,a_0\delta^j))\\
	&\leq 2^p \cbrac{\frac{2C_1}{a_0}}^p k^p \mu (B(z,a_0\delta^j)).
\end{align*}

Since 
$\mu (B(z,a_0\delta^j)) < \infty$
and by combining
the two estimates, and the
fact that $B(z_\alpha^k, a_0\delta^j) \subset \Q[\alpha]^j$
for each $\Q[\alpha]^j \in \tilde{\sC}_k$, 
we compute
\begin{align*}
\card \sC_k 
	&\leq 2^p \cbrac{\frac{2C_1}{a_0}}^p k^p\ 2^p \cbrac{\frac{C_1 + 2 a_0}{a_0}}^p k^p\\
	&= 4^p  \cbrac{\frac{C_1 + 2 a_0}{a_0}}^p\cbrac{\frac{2C_1}{a_0}}^p k^{2p}.\end{align*}
The observation that $\sC_k \subset \tilde{\sC}_k$ 
completes the proof.
\end{proof}

We have the following important
consequences. They are
useful in many of the calculations 
in \S\ref{HarmAnal}.
Following the notation in \cite{AKMC},
we write $\inprod{x}= 1 + \modulus{x}$.

\begin{corollary}
\label{Cor:Dya:Count}
Fix $\delta^{j+1} < t \leq \delta^j$
and a cube $\Q \in \DyQ^j$. Then,
$$
\sum_{R \in \DyQ^{j}} \inprod{\frac{\dist(R,\Q)}{t}}^{-M}
 \leq C \cbrac{ 1 + 4^p + \cbrac{\frac{3}{C_1}}^M\  \sum_{k=3}^\infty k^{2p - M}}$$
with $C$ being the constant in the previous proposition.
\end{corollary}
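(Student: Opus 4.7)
The plan is to partition $\DyQ^j = \bigsqcup_{k \geq 1} \sC_k$ as in Proposition \ref{Prop:Dya:CCount} and estimate the contribution of each shell separately, using the cardinality bound $\card \sC_k \leq C k^{2p}$ and the distance estimates of Proposition \ref{Prop:Dya:Est}.

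For the nearby shells $k=1$ and $k=2$, I would simply bound each summand by $\inprod{\dist(R,\Q)/t}^{-M} \leq 1$, giving a contribution of at most $\card \sC_1 + \card \sC_2 \leq C(1 + 4^p)$. This absorbs the cubes for which $\dist(R,\Q)$ may be zero or very small.

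For $k \geq 3$, I would produce a lower bound on $\dist(R,\Q)$ of the form $c k C_1 \delta^j$. Specifically, for $x \in R \in \sC_k$ Proposition \ref{Prop:Dya:Est}(ii) gives $d(z,x) \geq \tfrac{1}{3} k C_1 \delta^j$, while any $y \in \Q$ satisfies $d(y,z) \leq \diam\Q \leq C_1\delta^j$, so the triangle inequality yields $d(x,y) \geq \tfrac{1}{3} k C_1 \delta^j - C_1 \delta^j$, which for $k \geq 3$ is comparable to $\tfrac{C_1 k \delta^j}{3}$. Since $t \leq \delta^j$, this delivers $\dist(R,\Q)/t \geq \tfrac{C_1 k}{3}$ (up to absorbing the $k=3$ boundary case into the constants or into the $k=1,2$ block), whence
\[
\inprod{\dist(R,\Q)/t}^{-M} \leq \cbrac{\frac{3}{C_1}}^M k^{-M}.
\]
Multiplying by $\card \sC_k \leq C k^{2p}$ and summing yields $C \cbrac{3/C_1}^M \sum_{k=3}^\infty k^{2p-M}$.

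Adding the two contributions produces the claimed bound. The only delicate point is verifying that the naive lower bound $\tfrac{1}{3} k C_1 \delta^j - C_1\delta^j$ really is $\gtrsim k C_1 \delta^j / 3$ for all $k \geq 3$; this is handled by sweeping the borderline $k=3$ term into the trivial $k \leq 2$ block (which still costs only a constant), so no new obstacle arises. The rest of the proof is bookkeeping.
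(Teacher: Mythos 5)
Your proof is correct and follows the same shell decomposition as the paper: bound the $k\leq 2$ shells trivially by the cardinality estimate, and for $k \geq 3$ pair the distance lower bound from Proposition~\ref{Prop:Dya:Est}(ii) with $\card\sC_k \lesssim k^{2p}$ and $t \leq \delta^j$ to produce the convergent series. You are in fact slightly more careful than the paper, which asserts $\dist(R,\Q) \geq \tfrac{1}{3}kC_1\delta^j$ directly, whereas passing from $d(z,x)$ to the set-distance $\dist(R,\Q)$ loses a $\diam\Q \leq C_1\delta^j$ term; as you correctly observe, this affects only finitely many small values of $k$ (in fact $k \in \{3,4,5\}$ with your normalisation, not just $k=3$), and these are absorbed into the constant without changing the conclusion.
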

\begin{proof}
First, we note that
$$ 1 \leq 1 + \frac{\dist(R,Q)}{t} \quad\text{and}\quad \frac{\dist(R,Q)}{\delta^j} \leq 1 + \frac{\dist(R,Q)}{t}.$$
Then,
\begin{align*}
\sum_{R \in \DyQ^{j}} \inprod{\frac{\dist(R,\Q)}{t}}^{-M}
	&\leq  \card \sC_1 + \card \sC_2 + \sum_{k=3}^\infty \sum_{R\in \sC_k} \cbrac{\frac{\delta^j}{d(R,\Q)}}^M  \\
	&\leq   C + C2^{2p} +  \sum_{k=3}^\infty \card \sC_k \cbrac{\frac{\delta^j}{\frac{1}{3}k C_1 \delta^j}}^M \\
%	&\leq (1 + 4^p)C + \cbrac{\frac{3}{C_1}}^M\  \sum_{k=3}^\infty C k^{2p} k^{-M} \\
	&\leq C \cbrac{ 1 + 4^p + \cbrac{\frac{3}{C_1}}^M\  \sum_{k=3}^\infty k^{2p - M}}.
\end{align*}
\end{proof}

\begin{corollary}
\label{Cor:Dya:Conv}
For each $M > 2p + 1$, there exists
a constant $A_M > 0$ such that
$$\sup_{Q} \sum_{R \in \DyQ^{j}} 
\inprod{\frac{\dist(R,\Q)}{t}}^{-M} \leq A_M.$$
\end{corollary}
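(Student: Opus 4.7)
The plan is to observe that the bound obtained in Corollary \ref{Cor:Dya:Count} is already independent of the cube $Q$ (and of $t$ and $j$), so the only remaining task is to check that the series appearing on the right-hand side converges under the assumption $M > 2p+1$.

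First I would apply Corollary \ref{Cor:Dya:Count} directly to obtain, for every $Q \in \DyQ^j$ and every $t$ with $\delta^{j+1} < t \leq \delta^j$,
$$\sum_{R \in \DyQ^{j}} \inprod{\tfrac{\dist(R,\Q)}{t}}^{-M} \leq C \left( 1 + 4^p + \left(\tfrac{3}{C_1}\right)^M \sum_{k=3}^\infty k^{2p - M}\right).$$
Here $C$ is the constant from Proposition \ref{Prop:Dya:CCount}, which depends only on $p$, $C_1$, and $a_0$, and in particular is independent of $Q$, $t$, and $j$.

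Next I would note that $\sum_{k=3}^\infty k^{2p-M}$ is a $p$-series-type tail which converges precisely when $M - 2p > 1$, that is when $M > 2p + 1$, which is exactly the standing hypothesis. Denoting the value of this series by $S_M < \infty$, we may set
$$A_M = C\left( 1 + 4^p + \left(\tfrac{3}{C_1}\right)^M S_M\right),$$
which is a finite positive constant depending only on $M$ (and on the fixed structural constants $p$, $C_1$, $a_0$). Taking the supremum over $Q$ gives the claimed estimate.

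There is essentially no obstacle here: the work has been done in Proposition \ref{Prop:Dya:CCount} and Corollary \ref{Cor:Dya:Count}, whose proofs already absorbed the genuine geometric content (counting dyadic cubes in annular shells via the doubling condition). The only subtlety worth flagging is ensuring that the bound truly is uniform in $j$ and $t$; this is immediate because Corollary \ref{Cor:Dya:Count} was stated for arbitrary $j$ and arbitrary $t \in (\delta^{j+1}, \delta^j]$ with a constant depending on neither.
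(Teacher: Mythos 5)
Your proof is correct and follows exactly the route the paper intends: the paper states Corollary \ref{Cor:Dya:Conv} as an immediate consequence of Corollary \ref{Cor:Dya:Count} with no separate argument, relying precisely on the uniformity of the constant in $Q$, $t$, $j$ and the convergence of $\sum_{k\ge 3} k^{2p-M}$ when $M > 2p+1$. Nothing to add.
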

\section{Maximal functions and Carleson Measures}
\label{MaxCarl}

A full treatment of the classical theory
of maximal functions and Carleson measures
can be found in
\S4 of \cite{stein:harm} by Stein. 
The objects of interest that
we define in this section
are taken from this book
\emph{mutatis mutandis}.
Furthermore, we 
refer the reader to 
\cite{Heinonen}
by Heinonen
and \cite{CW} by Coifman and Weiss 
as two excellent expositions
that touch on some of the issues
and ideas presented here.

For a measurable subset $S$
with $0 < \mu(S) < \infty$
and $f \in \Lp[loc]{1}(\Spa, \C^N)$, 
we define 
the \emph{average} of $f$
on $S$ by $\fint_{S} f = \mu(S)^{-1} \int_{S} f$.
Then, we make the following definition.
\begin{definition}[Maximal function]
\index{Maximal function!Uncentred}
\nomenclature{$\Max f$}{Uncentred maximal function of $f$.}
Let $f \in \Lp[loc]{1}(\Spa,\C^N)$.  Define 
the uncentred maximal function of $f$ by:
$$ \Max f(x) = \sup_{B \ni x} \fint_{B} \modulus{f}\ d\mu$$
where the supremum is taken over all balls $B$ containing $x$.
\end{definition}

We want to deduce that this $\Max$
exhibits a weak type $(1,1)$ estimate
and is bounded in $\Lp{p}(\Spa,\C^N)$ for $p > 1$.
The proof of the following theorem is standard
via the Vitali type covering Theorem 1.2 in \cite{CW}.
 
\begin{theorem}[Maximal theorem]
\index{Maximal theorem}
\label{Thm:Max:Max}
There exists a constant $C_1 > 0$ such that 
whenever $f \in \Lp{1}(\Spa,\C^N)$, we have
$$
\mu(\set{x \in \Spa: \Max f(x) > \alpha}) \leq \frac{C_1}{\alpha} \int_{\Spa}\modulus{f}\ d\mu.$$
Whenever $f \in \Lp{q}(\Spa,\C^N)$ with $q > 1$,
$$
\norm{\Max f}_q \leq C_q \norm{f}_q$$
where $C_q > 0$ is a constant.
\end{theorem}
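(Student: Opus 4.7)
The plan is to establish the weak-type $(1,1)$ estimate through a Vitali-type covering argument exploiting the doubling property, and then to obtain the strong $(q,q)$ estimate for $1 < q < \infty$ by interpolating against the trivial $L^\infty$ bound $\norm{\Max f}_\infty \leq \norm{f}_\infty$ via the Marcinkiewicz interpolation theorem.

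For the weak-type inequality, I would fix $\alpha > 0$ and let $E_\alpha = \set{x \in \Spa : \Max f(x) > \alpha}$. By the definition of $\Max f$, for each $x \in E_\alpha$ there exists a ball $B_x \ni x$ with
\[ \mu(B_x) < \frac{1}{\alpha} \int_{B_x} \modulus{f}\ d\mu. \]
After truncating to balls of uniformly bounded radius (handling the unbounded-radius case by a monotone limit at the end, which is allowed since $f \in \Lp{1}$), the collection $\set{B_x}_{x \in E_\alpha}$ covers $E_\alpha$. Applying the Vitali-type covering Theorem 1.2 of \cite{CW}, I would extract a pairwise disjoint countable subcollection $\set{B_{x_i}}_i$ such that $E_\alpha \subset \Union_i \kappa B_{x_i}$ for a fixed dilation factor $\kappa$ (independent of $f$ and $\alpha$). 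The doubling hypothesis then gives $\mu(\kappa B_{x_i}) \leq C \mu(B_{x_i})$, where $C$ depends only on $\kappa$ and $C_D$, so that
\[ \mu(E_\alpha) \leq \sum_i \mu(\kappa B_{x_i}) \leq C \sum_i \mu(B_{x_i}) \leq \frac{C}{\alpha} \sum_i \int_{B_{x_i}} \modulus{f}\ d\mu \leq \frac{C}{\alpha} \norm{f}_1, \]
using the disjointness of the $B_{x_i}$. This is the desired weak-$(1,1)$ estimate with $C_1 = C$.

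For $q > 1$, I would observe that $\Max$ is sublinear, satisfies the weak-type $(1,1)$ bound just established, and obeys the trivial strong-type $(\infty, \infty)$ bound. The Marcinkiewicz interpolation theorem, whose proof in this abstract setting uses only the layer-cake formula together with these two endpoint estimates and is insensitive to the ambient geometry, then yields $\norm{\Max f}_q \leq C_q \norm{f}_q$ for every $1 < q < \infty$, with $C_q$ depending only on $q$ and on the doubling exponent $\p$.

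The main technical obstacle is the Vitali-type selection in the doubling metric setting; unlike in $\R^n$ one does not have the Besicovitch covering theorem at one's disposal, so a greedy ``$5r$-covering'' procedure must be used, which requires some care to handle balls of unbounded radius in the covering $\set{B_x}$. Fortunately, this is already carried out in \cite{CW} under precisely the doubling assumption (H4), and it yields a dilation factor $\kappa$ and hence a constant $C_1$ depending only on $C_D$. Once the covering is in hand, the remaining ingredients --- the doubling bound for $\mu(\kappa B_{x_i})$, the disjointness of the $B_{x_i}$, and the Marcinkiewicz interpolation --- are routine and do not interact with the absence of a differentiable structure on $\Spa$.
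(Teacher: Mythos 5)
Your proposal is correct and matches the paper's approach: the paper states that the proof is standard via the Vitali-type covering Theorem 1.2 of \cite{CW}, which is precisely the covering argument you use for the weak-$(1,1)$ bound, followed by the routine Marcinkiewicz interpolation against the trivial $L^\infty$ endpoint.
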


In order to set up a theory of Carleson measures, 
we require an \emph{upper half space}.
We define this to be $\Spa_+ = \Spa \times \R^+$
where $\R^+ = (0,\infty)$.
The \emph{cone} over a point $x \in \Spa$
is then defined as
$\Cone(x) = \set{(y,t) \in \Spa_+: d(x,y) < t}$
and this leads to the following.
\begin{definition}[Nontangential maximal function]
\index{Maximal function!Non-tangential}
\nomenclature{$\Max^\ast f(x)$}{Non-tangential maximal function of $f$ at point $x$.}
Let $f \in \Lp[loc]{1}(\Spa_+, \C^N)$. Define
$$
\Max^\ast f(x) = \sup_{(y,t) \in \Cone(x)} \modulus{f(y,t)}.$$ 
\end{definition}

Like its classical counterpart,
this maximal function is measurable.
This is the content of the following
proposition.
\begin{proposition}
The set $\set{x \in \Spa: \Max^\ast f(x) > \alpha}$
is open and hence $\Max^\ast f$ is measurable.
\end{proposition}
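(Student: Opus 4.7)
The plan is to show that the superlevel set $E_\alpha = \{x \in \Spa : \Max^\ast f(x) > \alpha\}$ is open by directly producing, for each point in $E_\alpha$, a metric ball around it that still lies in $E_\alpha$. Measurability of $\Max^\ast f$ then follows immediately, since $\Max^\ast f$ is a function into $[0,\infty]$ and every set of the form $\{\Max^\ast f > \alpha\}$ being open means $\Max^\ast f$ is lower semicontinuous, hence Borel measurable.

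The key observation is that the cone $\Cone(x) = \{(y,t) \in \Spa_+ : d(x,y) < t\}$ enjoys an openness property with respect to the base point $x$: a pair $(y,t)$ lies in $\Cone(x)$ precisely when $t - d(x,y) > 0$, and this condition is stable under small perturbations of $x$. Concretely, fix $x_0 \in E_\alpha$. By definition of $\Max^\ast f(x_0)$ as a supremum exceeding $\alpha$, there exists $(y,t) \in \Cone(x_0)$ with $|f(y,t)| > \alpha$. Set $\epsilon = t - d(x_0,y) > 0$.

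Now for any $x \in B(x_0, \epsilon)$, the triangle inequality gives
\[
d(x,y) \leq d(x,x_0) + d(x_0,y) < \epsilon + d(x_0,y) = t,
\]
so $(y,t) \in \Cone(x)$. Consequently $\Max^\ast f(x) \geq |f(y,t)| > \alpha$, which shows $x \in E_\alpha$. Hence $B(x_0, \epsilon) \subset E_\alpha$, and since $x_0 \in E_\alpha$ was arbitrary, $E_\alpha$ is open.

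There is no real obstacle here — the argument is purely a triangle inequality manipulation exploiting the strict inequality in the definition of the cone, and works verbatim in any metric space. The only minor subtlety is to note that one does not need any regularity of $f$ itself (not even local integrability plays a role in this step), because the supremum of a family of constant functions indexed by points of open sets is automatically lower semicontinuous; this is essentially what the argument above formalises in the present geometric setting.
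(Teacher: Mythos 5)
Your proof is correct and follows essentially the same route as the paper: both pick a point $(y,t)$ in the cone over $x_0$ with $|f(y,t)|>\alpha$ and use the triangle inequality to show nearby base points still see $(y,t)$ in their cone. The only cosmetic difference is that the paper observes directly that $B(y,t)$ itself is an open neighbourhood of $x_0$ contained in the superlevel set (since $B(y,t)$ is precisely the set of $z$ with $(y,t)\in\Cone(z)$), whereas you shrink to a ball $B(x_0,\epsilon)$ centred at $x_0$; both are the same one-line triangle-inequality argument.
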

\begin{proof}
Fix $x \in \Spa$
with $\Max^\ast f(x) >\alpha$. 
Then, there exists a $(y,t) \in \Cone(x)$ such that
$\modulus{f(y,t)} > \alpha$.
Consider the ball $B(y,t)$ and take any $z \in B(y,t)$.
Note that since $d(z,y) < t$ we have 
$(y,t) \in \Cone(z)$ and so 
$\Max^\ast f(z) > \alpha$.
Therefore,  
$x \in B(y,t) \subset \set{x \in \Spa: \Max^\ast f(x) > \alpha}$.
\end{proof}

Therefore, we define the following
function space in an analogous
way to the classical theory.
\begin{definition}[Nontangential function space]
\index{Non-tangential function space}
\nomenclature{$\Nfs$}{Space of non-tangential functions.}
Let $\Nfs$ denote the space of 
Borel measurable functions $f:\Spa_+ \to \C$
such that $\Max^\ast f \in \Lp{1}(\Spa)$. We
equip this space with the norm
$\norm{f}_{\Nfs} = \norm{\Max^\ast f}_1$.
\end{definition}

Now, let $B = B(x,r)$ 
and define the \emph{tent over $B$}
as 
$$\Tent(B) = \set{ (y,t) \in \Spa_+: d(x,y) \leq r - t}.$$
For an arbitrary open set $O \subset \Spa$, 
we define the \emph{tent over $O$} by 
$\Tent(O) = 
\Spa_+ \setminus \union_{x \in \Spa \setminus O} \Cone(x)$.
The following is an equivalent
characterisation of $\Tent(O)$.
\begin{proposition}
\label{Prop:Max:TentForm}
Whenever $(x,t) \in \Tent(O)$
we have that
$$(x,t) \in \Tent(B(x,d(x,\Spa \setminus O)))$$ 
and in particular,
$\Tent(O) = \union_{x \in O} \Tent(B(x,d(x,\Spa \setminus O))).$
\end{proposition}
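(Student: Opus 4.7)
The plan is to reduce everything to the simple pointwise description
$$(x,t)\in\Tent(O)\iff d(x,\Spa\setminus O)\ge t,$$
which falls straight out of the definition: $(x,t)\in\Tent(O)$ means $(x,t)\notin\Cone(z)$ for every $z\in\Spa\setminus O$, i.e.\ $d(z,x)\ge t$ for all such $z$, which is exactly $d(x,\Spa\setminus O)\ge t$ (using the convention $d(x,\emptyset)=+\infty$ if $O=\Spa$).

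Granted this reformulation, the first assertion is immediate. Writing $r=d(x,\Spa\setminus O)$, membership in $\Tent(B(x,r))$ asks for $d(x,x)\le r-t$, and this is just $0\le r-t$, which we have.

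For the set equality I would argue both inclusions. For $\Tent(O)\subset\bigcup_{x\in O}\Tent(B(x,d(x,\Spa\setminus O)))$, take $(x,t)\in\Tent(O)$; since $t>0$ and $d(x,\Spa\setminus O)\ge t$, the point $x$ actually lies in $O$ (here one uses that $\Spa\setminus O$ is closed, so the distance to it is positive exactly on $O$), and the first assertion places $(x,t)$ in the cube tent indexed by this $x$. For the reverse inclusion, fix $x\in O$ and take $(y,s)\in\Tent(B(x,r))$ with $r=d(x,\Spa\setminus O)$, so $d(x,y)\le r-s$. For any $z\in\Spa\setminus O$ the triangle inequality gives
$$d(z,y)\ge d(z,x)-d(x,y)\ge r-(r-s)=s,$$
so $(y,s)\notin\Cone(z)$ for every such $z$, and hence $(y,s)\in\Tent(O)$.

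There is no real obstacle here; the only thing that needs a moment's care is the edge case where $\Spa\setminus O$ is empty (handled by the $+\infty$ convention) and the observation that a point $x$ with $d(x,\Spa\setminus O)>0$ automatically belongs to $O$, which keeps the index set of the union honest.
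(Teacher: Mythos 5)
Your proof is correct and takes essentially the same approach as the paper: reformulate $(x,t)\in\Tent(O)$ as $d(x,\Spa\setminus O)\geq t$, from which the first assertion is immediate, and then argue the set equality by double inclusion. The only difference is that you spell out the converse inclusion via a triangle-inequality check, whereas the paper dispatches it in one line with the remark that $B(x,d(x,\Spa\setminus O))\subset O$; your version is slightly more explicit but not a genuinely different argument.
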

\begin{proof}
First, note that by de Morgen's law, we can conclude that
$\Tent(O) = \intersect_{y \in \Spa \setminus O} \Spa_+ \setminus \Gamma(y)$.
Fix $(x,t) \in \Tent(O)$. So,
$(x,t) \in \Spa_+ \setminus \Gamma(y)$ for all $y \in \Spa \setminus O$.
That is, for all $y \not\in O$, 
we have $(x,t) \not \in \Gamma(y)$ which implies
$d(x,y) \geq t$. Therefore, $d(x,\Spa \setminus O) \geq t$.
Then, by the definition of $\Tent(B(x,r))$
and setting $r = d(x,\Spa \setminus O)$,
we conclude $(x,t) \in \Tent(B(x,d(x,\Spa\setminus O)))$.
The converse inclusion is easy 
since $B(x,d(x,\Spa\setminus O)) \subset O$.
\end{proof}

\begin{definition}[Carleson function]
\index{Carleson!Function}
\nomenclature{$\Cf(\nu)$}{Carleson function of Borel measure $\nu$.}
Let $\nu$ be any Borel measure on $\Spa_+$.
Define
$$\Cf(\nu)(x) = \sup_{B \ni x} \frac{\nu(\Tent(B))}{\mu(B)}.$$
\end{definition}

\begin{definition}[Space of Carleson measures]
\index{Carleson!Space of measures} 
\nomenclature{$\Carl$}{Space of Carleson measures.}
We define $\Carl$ to be the space of measures
$\nu$ that are Borel on $\Spa_+$ and such that
$\Cf(\nu)$ is bounded. Such a measure is
called a \emph{Carleson measure} and we define
$$\norm{\nu}_{\Carl}  = \sup_{x \in \Spa} \Cf(\nu)(x)$$
to be the \emph{Carleson norm}.
\end{definition}

Since we have a dyadic structure,
we define the 
\emph{Carleson box}
over $\Q \in \DyQ$
by 
$\CBox_{\Q} = \close{\Q} \times (0, \len(\Q)]$.
Unlike the classical 
definition, we
are forced to take $\close{\Q}$
since $\DyQ$ is only 
guaranteed to cover $\Spa$
almost everywhere.
The importance
of this subtlety 
will become apparent in the 
proof of the following
proposition that
provides an alternative characterisation
of a Carleson measure.

\begin{proposition}
Let $\nu$ be a Borel measure on $\Spa_+$.
Then the statement 
$$\sup_{B} \frac{\nu(\Tent(B))}{\mu(B)}  < \infty
\qquad\text{for every ball $B$}$$
is equivalent to the statement 
$$\sup_{\Q} \frac{\nu(\CBox_{\Q})}{\mu(\Q)}  < \infty
\qquad\text{for every $\Q \in \DyQ$}.$$
\end{proposition}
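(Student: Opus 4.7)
The plan is to prove the two implications separately by comparing tents over balls with Carleson boxes over dyadic cubes at matching scales; in each direction, doubling combined with the inball property (v) of Theorem \ref{Thm:Dya:Christ} converts one base measure into the other at the cost of a universal constant.

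For the implication ball $\Rightarrow$ dyadic, fix $Q \in \DyQ^k$ with centre $z_\alpha^k$ and set $B := B(z_\alpha^k, (C_1 + 1)\delta^k)$. For $(y,t) \in \CBox_Q = \close{Q} \times (0, \delta^k]$, property (iv) yields $d(z_\alpha^k, y) \leq C_1\delta^k \leq (C_1+1)\delta^k - t$, so $(y,t) \in \Tent(B)$ and hence $\CBox_Q \subset \Tent(B)$. Property (v) gives $B(z_\alpha^k, a_0\delta^k) \subset Q$, and since $B$ is a concentric dilate of $B(z_\alpha^k, a_0\delta^k)$ by the fixed factor $(C_1+1)/a_0$, doubling yields $\mu(B) \leq C_0 \mu(Q)$ with $C_0$ depending only on $C_D$, $C_1$, and $a_0$. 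Combining with the assumed ball bound gives the dyadic bound.

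For the implication dyadic $\Rightarrow$ ball, fix $B = B(x,r)$ and choose $k \in \In$ with $\delta^{k+1} < r \leq \delta^k$; let $\sC := \set{Q \in \DyQ^k : \close{Q} \cap B \neq \emptyset}$. First I show $\Tent(B) \subset \bigcup_{Q \in \sC} \CBox_Q$: given $(y,t) \in \Tent(B)$, we have $y \in B$ and $t \leq r \leq \delta^k = \len(Q)$ for every $Q \in \DyQ^k$, so it remains to find $Q \in \sC$ with $y \in \close{Q}$. Since $\Spa = \close{\bigcup \DyQ^k}$, choose $y_n \to y$ with $y_n \in Q_n \in \DyQ^k$; properties (iv) and (v) combined with doubling imply that only finitely many level-$k$ cubes can meet a fixed bounded neighbourhood of $y$ (the disjoint inballs from (v) all fit inside a single doubled ball), so pigeonhole extracts a single $Q$ with $y \in \close{Q}$, and this $Q$ lies in $\sC$ because $y \in \close{Q} \cap B$. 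Next, each $Q \in \sC$ is contained in $B(x, r + C_1\delta^k)$, and disjointness of the open cubes of $\DyQ^k$ gives $\sum_{Q \in \sC} \mu(Q) \leq \mu(B(x, r + C_1 \delta^k))$; since $\delta^k < r/\delta$, doubling delivers $\mu(B(x, r + C_1 \delta^k)) \leq C' \mu(B)$ for $C'$ depending only on $C_D$, $C_1$, and $\delta$. Putting these together,
$$
\nu(\Tent(B)) \leq \sum_{Q \in \sC} \nu(\CBox_Q) \leq M_2 \sum_{Q \in \sC} \mu(Q) \leq M_2 C' \mu(B),
$$
where $M_2$ is the assumed dyadic supremum.

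The main obstacle is the covering claim $\Tent(B) \subset \bigcup_{Q \in \sC} \CBox_Q$: a general Borel $\nu$ may charge the dyadic boundary skeleton $\bnd(\bigcup \DyQ^k) \times \R^+$, which is precisely why $\CBox_Q$ must be defined using $\close{Q}$ rather than $Q$, and why one cannot settle for an almost-everywhere covering. The local finiteness argument via (iv), (v), and doubling is what legitimises the step; once it is in place, both implications reduce to routine applications of doubling.
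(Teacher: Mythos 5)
Your proof is correct and follows the paper's overall strategy: two implications at matching scales $\delta^{k+1} < r \leq \delta^k$, controlling the change of base measure via doubling combined with property (v) of Theorem~\ref{Thm:Dya:Christ}. The ball $\Rightarrow$ dyadic direction is essentially identical to the paper's (up to the harmless tighter constant $C_1+1$ in place of $C_1+2$). The dyadic $\Rightarrow$ ball direction is where you genuinely diverge, and your route is a clean simplification: the paper first bounds $\card N(B)$ using the shell-counting machinery of Proposition~\ref{Prop:Dya:CCount}, then compares $\mu(Q)$ to $\mu(B)$ cube by cube, whereas you exploit the mutual disjointness of the open level-$k$ cubes in a single stroke, noting that every $Q\in\sC$ sits in $B(x,(1+C_1/\delta)r)$ and hence $\sum_{Q\in\sC}\mu(Q)\leq\mu(B(x,(1+C_1/\delta)r))\lesssim\mu(B)$, eliminating the cardinality count entirely from this proposition. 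Your covering argument is also more self-contained than the paper's: the paper reaches $y\in\bnd Q$ by appealing to the earlier (stated but unproved) identity $\bnd(\union\DyQ^k)=\union_Q\bnd Q$, while you derive local finiteness of the level-$k$ cubes near $y$ directly from the disjoint inballs and doubling, then pigeonhole a convergent sequence $y_n\to y$ into a single cube. Both are correct; yours fills in a small gap the paper leaves implicit and avoids an unnecessary dependency on Proposition~\ref{Prop:Dya:CCount}.
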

\begin{proof}
First, fix $\Q \in \DyQ^j$ and let $x_{\Q}$ be its centre.
Then, we have that $\Q \subset B(x_{\Q}, C_1\delta^j)$. Then,
certainly, $\CBox_{\Q} \subset \Tent(B(x_{\Q}, (C_1 + 2) \delta^j))$.
%\begin{center}
%\includegraphics{diagrams/max-carl1.pdf}
%\end{center}
So,
\begin{multline*}
\nu(\CBox_{\Q}) \leq \nu(\Tent(B(x_{\Q}, (C_1 + 2)\delta^j)))
	\leq \norm{\nu}_\Carl \mu(B(x_{\Q}, (C_1 + 2)\delta^j) \\
	\leq 2^\p \cbrac{\frac{C_1 + 2}{a_0}}^\p \norm{\nu}_\Carl  \mu(B(x_{\Q},a_0 \delta^j))
	\leq 2^\p \cbrac{\frac{C_1 + 2}{a_0}}^\p \norm{\nu}_\Carl \mu(\Q).
\end{multline*}

The converse is harder.
Fix $B = B(x,r)$ and let $j \in \In$ such that
$\delta^{j+1} < r \leq \delta^j$.
Let $N(B) = \set{\Q \in \DyQ^j: \Q \intersect B \neq \emptyset}$.
It is an easy fact that $N(B) \neq \emptyset$.
\begin{enumerate}[(i)]
\item 
First, we claim that $B \subset \union_{\Q \in N(B)} \close{\Q}$.
Suppose $y \in B$ but $y \not \in \union N(B)$. That is,
$y \not \in \Q$ for all $\Q \in \DyQ^j$. 
Thus, 
%Proposition \ref{Prop:Dya:Boundary} and Proposition $\ref{Prop:Dya:Density}$,
there exists a $\Q \in \DyQ^j$ such that
$y \in \bnd\Q$. That is, for every $\epsilon > 0$,
$B(y,\epsilon) \intersect \Q \neq \emptyset$.
But there exists an $\epsilon > 0$ such that
$B(y,\epsilon) \subset B$, and so 
$\Q \intersect B \neq \emptyset$. This
means that $\Q \in N(B)$ and establishes
the claim.

\item
Fix $\Q \in N(B)$ as a reference cube and let $\Q' \in N(B)$ be
any other cube. Since $r < \delta^j$, we note that
$d(x,x_{\Q}), d(x,x_{{\Q}'}) \leq \delta^j + C_1 \delta^j$. Therefore,
$d(x_{\Q}, x_{{\Q}'}) \leq 2(C_1 + 1) \delta^j$. That is, 
all the centres of cubes $\Q' \in N(B)$ are 
inside the ball $B(x_{\Q}, 2(C_1 + 1) \delta^j)$
and hence $\tilde{\sC}_{2(C_1 + 1)}$. Thus,
by Proposition \ref{Prop:Dya:CCount},
$$\card N(B) \leq \card \tilde{\sC}_{2(C_1 +1)}
	\leq C 2^p (C_1 + 1)^{2p}.$$

\item
Now, suppose that $(y,t) \in \Tent(B)$.
That is, $y \in B$ and 
We have $d(y,t) \leq r - t \leq \delta^j$.
By (i), there exists a cube $\Q \in N(B)$
such that $y \in \close{\Q}$.
Therefore, $(y,t) \in \CBox_{\Q} = \close{\Q}$
and shows that
$\Tent(B) \subset \union_{\Q \in N(B)} \CBox_{\Q}$.

\item
Fix $\Q \in N(B)$ and so $d(x, x_{\Q}) \leq (C_1 + 1) \delta^j$.
Set $\epsilon = r = \delta^{j+1}$ in Proposition \ref{Prop:Dya:BallCompare}
so that
\begin{align*}
\mu( B(x_{\Q}, \delta^{j+1})) 
	&\leq 2^p \cbrac{ \frac{(C_1+ 1)\delta^{j} + 2\delta^{j+1}}{\delta^{j+1}}} 
		\mu (B(x,\delta^{j+1})) \\
	&\leq 2^p ((C_1 +1)\delta^{-1} + 2)^p \mu(B(x,r)).
\end{align*}
\end{enumerate}
Now, by combining (i) - (iv),
\begin{multline*}
\nu(\Tent(B)) 
	\leq \sum_{\Q \in N(B)} \nu(\CBox_{\Q})
	\lesssim \sum_{\Q \in N(B)} \mu (B(x_{\Q}, C_1 \delta^j)) \\ 
	\lesssim \sum_{\Q \in N(B)} \mu(B(x_{\Q}, \delta^{j+1})) 
	\lesssim \card N(B) \mu(B(x,r)) 
	\lesssim \mu(B(x,r))
\end{multline*}
which completes the proof.
\end{proof}

We quote the following covering
theorem of Whitney given as Theorem 1.3 in \cite{CW}.

\begin{theorem}[Whitney Covering Theorem]
\label{Thm:Max:Whitney}
Let $O \subsetneqq \Spa$ be open. Then,
there exists a set of balls $\sE = \set{B_j}_{j \in \Na}$
and a constant $c_1 < \infty$ independent of $O$ 
such that
\begin{enumerate}[(i)]
\item The balls in $\sE$ are mutually disjoint, 
\item $O = \Union_{j \in \Na} c_1 B_j$,
\item $4c_1 B_j \not\subset O$.
\end{enumerate}
\end{theorem}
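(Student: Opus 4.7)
The approach is the classical Whitney decomposition adapted to the doubling metric space setting. For a parameter $\alpha \in (0,1)$ to be chosen, I define $r : O \to (0,\infty)$ by $r(x) = \alpha \, d(x, \Spa \setminus O)$, which is positive and finite since $O$ is open and $O \neq \Spa$. The family $\mathcal{F} = \set{B(x, r(x)) : x \in O}$ covers $O$, and I invoke Zorn's lemma to extract a maximal pairwise disjoint subfamily $\sE = \set{B_j}$ with $B_j = B(x_j, r_j)$.

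Property (i) holds by construction. For (ii), the key observation is that $r$ is $\alpha$-Lipschitz, since $d(\cdot, \Spa \setminus O)$ is $1$-Lipschitz. Given $x \in O$, by maximality of $\sE$ the ball $B(x, r(x))$ meets some $B_j$, so $d(x, x_j) < r(x) + r_j$. Combining with $|r(x) - r_j| \leq \alpha \, d(x, x_j)$ forces $r(x) \leq \frac{1+\alpha}{1-\alpha} r_j$ and hence $d(x, x_j) < \frac{2}{1-\alpha} r_j$. So choosing $c_1 \geq \frac{2}{1-\alpha}$ yields $x \in c_1 B_j$, giving $O \subset \bigcup_j c_1 B_j$. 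The reverse inclusion $c_1 B_j \subset O$ holds whenever $c_1 < 1/\alpha$, by the triangle inequality: for $y \in c_1 B_j$ one has $d(y, \Spa \setminus O) \geq r_j/\alpha - c_1 r_j > 0$. For (iii), I need $4 c_1 \alpha > 1$, since then the infimum definition of $d(x_j, \Spa \setminus O) = r_j/\alpha$ furnishes a point of $\Spa \setminus O$ at distance strictly less than $4 c_1 r_j$ from $x_j$, so $4c_1 B_j \not\subset O$. The three constraints $\frac{2}{1-\alpha} \leq c_1 < 1/\alpha$ and $4 c_1 \alpha > 1$ are simultaneously satisfiable, for instance by $\alpha = 1/4$ and $c_1 = 3$.

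The main obstacle, and the point at which the doubling hypothesis is genuinely used, is ensuring that $\sE$ is countable so that it may be indexed by $\Na$. The doubling property combined with the nondegeneracy $0 < \mu(B(x,r)) < \infty$ renders $(\Spa, d)$ separable, via the volume packing argument implicit in Proposition \ref{Prop:Dya:CCount} applied with small radii: any maximal $\epsilon$-separated subset of a ball is finite, so each ball admits a countable dense subset and $\Spa$ is separable as a countable union of balls centred at a fixed basepoint. In a separable metric space any pairwise disjoint family of nonempty open sets is countable, obtained by selecting a dense-set point from each member of $\sE$, allowing us to reindex by $\Na$.
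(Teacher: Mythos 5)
Your proof is correct. Note that the paper does not actually prove this result: it is quoted verbatim as Theorem~1.3 of Coifman--Weiss \cite{CW}, so there is no internal proof to compare against. Your argument is the standard one from that source and from the theory of spaces of homogeneous type: take $r(x) = \alpha\,d(x,\Spa\setminus O)$, extract a maximal pairwise disjoint subfamily, use the $\alpha$-Lipschitz character of $r$ together with maximality to force the $c_1$-dilates to cover $O$, and use $c_1 < 1/\alpha$ and $4c_1\alpha > 1$ for the containment and the non-containment respectively. The constant check ($\alpha = 1/4$, $c_1 = 3$) is right: $8/3 \le 3 < 4$ and $4\cdot 3\cdot\tfrac14 = 3 > 1$. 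Your countability argument is the one genuinely substantive step that is often glossed over, and you handle it correctly: doubling together with $0 < \mu(B) < \infty$ gives finiteness of maximal $\epsilon$-separated sets in any ball, hence separability of $\Spa$, hence countability of any pairwise disjoint family of nonempty open sets. In short, you have supplied a valid self-contained proof of a result the paper takes as a black box.
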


This allows us to prove the following theorem of Carleson.

\begin{theorem}[Carleson's Theorem]
\label{Thm:Max:Carl} 
Let $f \in \Nfs$ and $\nu \in \Carl$. Then,
$$
\iint_{\Spa_{+}} \modulus{f(x,t)}\ d\nu(x,t)
	\lesssim \norm{f}_{\Nfs} \norm{\nu}_{\Carl}$$
where the constant depends only 
on $\p$ and the Whitney constant $c_1$.
\end{theorem}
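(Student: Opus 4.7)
The plan is to combine the layer cake formula with a Whitney-type decomposition of the level sets of the nontangential maximal function, in the classical Carleson style adapted to this doubling metric setting. I set $O_\alpha = \set{x \in \Spa : \Max^\ast f(x) > \alpha}$ and $E_\alpha = \set{(y,t) \in \Spa_+ : \modulus{f(y,t)} > \alpha}$, and begin with
\begin{equation*}
\iint_{\Spa_+} \modulus{f}\ d\nu = \int_0^\infty \nu(E_\alpha)\ d\alpha.
\end{equation*}
The first key observation is the inclusion $E_\alpha \subset \Tent(O_\alpha)$: if $(y,t) \in E_\alpha$ and $x \in \Spa \setminus O_\alpha$, then $\Max^\ast f(x) \leq \alpha < \modulus{f(y,t)}$ forces $(y,t) \not\in \Cone(x)$, i.e.\ $d(x,y) \geq t$, so by the definition of $\Tent(O_\alpha)$ as $\Spa_+ \setminus \bigcup_{x \not\in O_\alpha} \Cone(x)$ we have $(y,t) \in \Tent(O_\alpha)$.

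Next, I apply \thmref{Thm:Max:Whitney} to the open set $O_\alpha$ (assuming $O_\alpha \subsetneqq \Spa$; the case $O_\alpha = \Spa$ only occurs for $\alpha$ with $\alpha \mu(\Spa) \leq \norm{\Max^\ast f}_1$, which is harmless) to obtain disjoint balls $B_j = B(x_j, r_j)$ with $O_\alpha = \Union c_1 B_j$ and $4c_1 B_j \not\subset O_\alpha$. I claim $\Tent(O_\alpha) \subset \Union_j \Tent(6c_1 B_j)$: if $(y,t) \in \Tent(O_\alpha)$, then by \propref{Prop:Max:TentForm} we have $t \leq d(y, \Spa \setminus O_\alpha)$; pick $j$ with $y \in c_1 B_j$ and a point $z \in 4c_1 B_j \setminus O_\alpha$, giving $t \leq d(y,z) \leq 5c_1 r_j$ and thus $d(x_j, y) \leq c_1 r_j \leq 6c_1 r_j - t$, placing $(y,t)$ in $\Tent(6c_1 B_j)$. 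Combining this with disjointness of the $B_j$ and the doubling property yields
\begin{equation*}
\nu(\Tent(O_\alpha)) \leq \sum_j \nu(\Tent(6c_1 B_j)) \leq \norm{\nu}_{\Carl} \sum_j \mu(6c_1 B_j) \lesssim \norm{\nu}_{\Carl} \sum_j \mu(B_j) \leq \norm{\nu}_{\Carl} \mu(O_\alpha),
\end{equation*}
with the implicit constant depending only on $\p$ and $c_1$ (via the doubling bound $\mu(\kappa B) \leq C_D \kappa^\p \mu(B)$).

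Finally, I integrate in $\alpha$ and use the layer cake identity in the other direction together with \thmref{Thm:Max:Max}:
\begin{equation*}
\iint_{\Spa_+} \modulus{f}\ d\nu \lesssim \norm{\nu}_{\Carl} \int_0^\infty \mu(O_\alpha)\ d\alpha = \norm{\nu}_{\Carl} \int_{\Spa} \Max^\ast f\ d\mu = \norm{\nu}_{\Carl} \norm{f}_{\Nfs},
\end{equation*}
which is the desired estimate. The main subtlety is the Whitney inclusion $\Tent(O_\alpha) \subset \Union \Tent(C B_j)$, since it is precisely here that the metric geometry enters and requires the characterisation in \propref{Prop:Max:TentForm}; the rest is a straightforward combination of the doubling inequality, disjointness, and the Markov-type bookkeeping inherent in the layer cake formula.
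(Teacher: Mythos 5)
Your proposal follows essentially the same route as the paper: the inclusion $E_\alpha \subset \Tent(O_\alpha)$, the Whitney-covering lemma applied to $O_\alpha$ together with the tent characterisation of \propref{Prop:Max:TentForm} to get $\Tent(O_\alpha) \subset \bigcup_j \Tent(Cc_1 B_j)$, then doubling plus disjointness and the layer-cake formula. (Your dilation constant $6c_1$ is in fact a slightly tighter version of the paper's $9c_1$; both work.) The one place you gloss over a real step is the case $O_\alpha = \Spa$ when $\mu(\Spa) < \infty$. Observing that this only happens for $\alpha$ in a bounded range is not enough: you still need a bound on $\nu(E_\alpha) \leq \nu(\Spa_+)$ for those $\alpha$, and the Whitney argument does not apply since \thmref{Thm:Max:Whitney} requires $O \subsetneqq \Spa$. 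The paper closes this by a convergence argument over an exhausting sequence of balls $B_n$, using $\nu(\Tent(B_n)) \leq \norm{\nu}_\Carl \mu(\Spa)$ and $\ind{\Tent(B_n)} \to \ind{\Spa_+}$ to conclude $\nu(\Spa_+) \leq \norm{\nu}_\Carl \mu(\Spa)$, after which your estimate $\int_0^{\alpha_0}\nu(E_\alpha)\,d\alpha \leq \alpha_0\,\nu(\Spa_+) \leq \norm{\nu}_\Carl \norm{f}_\Nfs$ does go through. You should include that exhaustion step rather than dismissing the case as harmless.
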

\begin{proof}
\begin{enumerate}[(i)]
\item
We prove
$\set{(x,t) \in \Spa_+: \modulus{f(x,t)} > \alpha} \subset \Tent(E_\alpha)$
where $E_{\alpha} = \set{x \in \Spa: \Max^\ast f(x) > \alpha}$.
Fix $(x,t) \in \Spa_+$ such that $\modulus{f(x,t)} > \alpha$. Then,
whenever $y \in B(x,t)$, we also have $x \in B(y,t)$ and
$$\Max^\ast f(y) = \sup_{t > 0} \sup_{z \in B(y,t)} \modulus{f(z,t)}
	> \modulus{f(x,t)} > \alpha.$$
Therefore, $B(x,t) \subset E_\alpha$ and
$(x,t) \in \Tent(B(x,t)) \subset \Tent(E_\alpha)$.

\item 
Let $O \subsetneqq \Spa$ be an open set, and
let $\sE = \set{B_j}_{j \in \Na}$ 
be the Whitney covering
guaranteed by Theorem \ref{Thm:Max:Whitney}. We prove that
$$ \Tent(O) \subset \union_{j} \Tent(9c_1 B_j).$$

Fix $x \in O$ and let 
$(x,t) \in T(B(x,d(x,\Spa\setminus O)))$. 
Then, there exists a ball 
$B_j = B_j(x_j,r_j) \in \sE$ such that
$x \in c_1 B_j$. Let $y \in B(x,d(x,\Spa\setminus O))$.
Since $4c_1 B_j \intersect \Spa \setminus O$,
for any $z \in \Spa\setminus O$
$d(y, \Spa \setminus O) \leq d (x,z) \leq 8c_1 r_j$
Then, 
$$
d(y,x_j) \leq d(y, x) + d(x, x_k)
	\leq d(x,\Spa \setminus O) + d(x,x_k)
	< 8c_1 r_j + c_1 r_j  = 9c_1 r_j.$$
This proves that
$B(x, d(x,\Spa \setminus O)) \subset 9c_1B_j$
and so
$\Tent(B(x, d(x,\Spa \setminus O))) \subset \Tent(9c_1B_j).$
We apply Proposition \ref{Prop:Max:TentForm} to 
conclude that
$ \Tent(O) \subset \union_{j} \Tent(9c_1 B_j).$

\item
Now, we prove that there exists a constant
$C > 0$ such that for all open sets $O \subset \Spa$,
$$ \nu(\Tent(O)) \leq C \norm{\nu}_{\Carl} \mu(O).$$

First assume that $O = \Spa$. 
If $\mu(\Spa) = \infty$, then there
is nothing to prove. So suppose otherwise.
Now, for any $x \in \Spa$
and any ball $B_r = B(x,r)$, 
$$
\frac{1}{\mu(B_r)} \nu(\Tent(B_r)) \leq \Cf(\nu)(x) \leq \norm{\nu}_{\Carl}$$
and therefore,
$\nu(\Tent(B_r)) \leq \norm{\nu}_{\Carl} \mu(\Spa)$
for every ball $B_r$ of radius $r$.
Now, $\ind{{\Tent(B_{n})}} \leq 1$ for each $n \in \Na$
and $\ind{{\Tent(B_{n})}} \to \ind{\Tent(\Spa)}$ and $n \to \infty$ 
pointwise. Then, by application of Dominated Convergence Theorem, 
$$
\nu(\Tent(\Spa)) 
	= \int_{\Spa_+} \lim_{n \to \infty} \ind{{\Tent(B_n)}}\ d\nu
	= \lim_{n \to \infty}\int_{\Spa_+} \ind{{\Tent(B_n)}}\ d\nu
	\leq \norm{\nu}_{\Carl} \mu(\Spa).$$ 

Now, consider the case when $O \subsetneqq \Spa$. 
Then, by (ii) and the subadditivity of the measure,
\begin{multline*}
\nu(\Tent(O))
	\leq \sum_{j} \nu(\Tent(9c_1B_j))
	\leq \norm{\nu}_{\Carl} \sum_{j} \mu(9c_1 B_j) \\
	\leq 2^\p (9c_1)^\p \norm{\nu}_{\Carl} \sum_{j} \mu(B_j)
	\leq (18c_1)^\p \norm{\nu}_{\Carl} \mu(O).
\end{multline*}

\item
By (i) and (iii),
$$
\nu\set{(x,t) \in \Spa_+: \modulus{f(x,t)} > \alpha}
	\lesssim \norm{\nu}_{\Carl} \mu \set{x \in \Spa: \Max^\ast f(x) > \alpha}$$
and integrating both sides with respect to $\alpha$ completes the proof.
\end{enumerate}
\end{proof}
\section{Harmonic Analysis of $\Pi_B$}
\label{HarmAnal}

Let $\DyQ_t = \DyQ^j$ for
$\delta^{j+1} < t \leq \delta^j$.
Following the structure of the 
proof in \cite{AKMC},
for $t \in \R^+$,
we define the \emph{dyadic averaging operator}
$\Av_t:\Hil \to \Hil$ as
$$
\Av_t(x) = \sum_{\Q \in \DyQ_t} \ind{\Q}(x) \fint_{\Q} u\ d\mu$$
when $x \in \union\DyQ_t$ and $0$
elsewhere.
A straightforward calculation shows that
$\Av_t \in \bddlf(\Hil)$
and $\norm{\Av_t} \leq 1$ uniformly in $t$.
Then, the \emph{principal part}
is defined as $\pri_t(x)w = (\Theta_t^B \omega)(x)$
for $w \in \C^N$ 
and where $\omega(x) = w$ for all $x \in \Spa$.

Following \cite{AKMC}, 
to prove Theorem \ref{Thm:HypRes:Main}
as a consequence
of Proposition \ref{Prop:HypRes:Main},
we need to show that 
$$\int_{0}^\infty \norm{\Theta_t^B P_t u}^2\ \frac{dt}{t} \lesssim \norm{u}^2$$
for $u \in \ran(\Pi)$.
Thus, we follow the 
paradigm in \cite{AKMC}, \cite{AKM2} and
\cite{Morris} and decompose this problem in the following way:
\begin{multline*}
\int_{0}^\infty \norm{\Theta_t^B P_t u}^2\ \frac{dt}{t}
	\leq \int_0^\infty \norm{\Theta_t^B P_t u - \pri_t \Av_t u}^2 \frac{dt}{t} \\
	+ \int_{0}^\infty \norm{\pri_t \Av_t (P_t - I)u}^2\ \frac{dt}{t}
	+ \iint_{\Spa_+} \modulus{\Av_t u(x)}^2 \modulus{\pri_t(x)}^2\ \frac{d\mu(x)dt}{t}.
\end{multline*}
The purpose of the first two
terms is to reduce the estimate down to the third term
which can be dealt
with a Carleson measure estimate.

\subsection{Off-Diagonal Estimates}

The following lemma is a primary tool in our
argument. Certainly, it was known to 
the authors of \cite{AKMC}
since they use a similar result
in the proof of 
their Proposition 5.2. The key difference
is that we use $\Lip \xi$
instead of $\norm{\grad \xi}_\infty$
to control the ``slope'' of our cutoff.
Furthermore, this lemma is
used later in our work
to construct Lipschitz substitutions where
\cite{AKMC}, \cite{AKM2} and \cite{Morris}
use smooth cutoff functions.
We include a detailed proof
of this lemma since it is central to
our work.

\begin{lemma}[Lipschitz separation lemma]
\label{Lem:Harm:LipSep}
Let $(X,d)$ be a metric space and 
suppose $E, F \subset X$ satisfy
$d(E,F) > 0$. Then, there exists
a Lipschitz function $\eta:X \to [0,1]$, 
and a set 
$\tilde{E} \supset E$
with $d(\tilde{E},F) > 0$
such that
$$\eta|_E = 1,\quad \eta|_{X \setminus \tilde{E}} = 0
\quad\text{and}\quad \Lip \eta \leq 4/d(E,F).$$ 
\end{lemma}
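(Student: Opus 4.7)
The plan is to build $\eta$ directly from the distance function to $E$, truncated so that it vanishes before reaching $F$. Set $\delta = d(E,F) > 0$ and consider
$$
\eta(x) = \max\!\left(0,\ 1 - \frac{2\,d(x,E)}{\delta}\right).
$$
Since $x \mapsto d(x,E)$ is $1$-Lipschitz, multiplying by $2/\delta$ and truncating against $0$ preserves the Lipschitz property, yielding $\Lip\eta \leq 2/\delta \leq 4/\delta = 4/d(E,F)$. Clearly $\eta \equiv 1$ on $E$ (where $d(\cdot,E) = 0$), and $\eta(x) = 0$ whenever $d(x,E) \geq \delta/2$.

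The set $\tilde{E}$ is then naturally taken to be the thin neighborhood
$$
\tilde{E} = \{x \in X : d(x,E) < \delta/2\},
$$
which obviously contains $E$ and outside which $\eta$ vanishes identically.

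The only remaining step, and really the only place that uses anything beyond the definitions, is showing $d(\tilde{E},F) > 0$. The plan here is the standard triangle inequality trick: for any $x \in \tilde{E}$, any $y \in F$, and any $e \in E$,
$$
d(y,E) \leq d(y,x) + d(x,e),
$$
and taking the infimum over $e \in E$ gives $d(y,E) \leq d(y,x) + d(x,E)$. Since $d(y,E) \geq \delta$ while $d(x,E) < \delta/2$, this forces $d(x,y) > \delta/2$, hence $d(\tilde{E},F) \geq \delta/2 > 0$.

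I do not expect any genuine obstacle; the lemma is essentially a packaged form of the elementary fact that distance functions are $1$-Lipschitz. The only conceptual point to be careful about is choosing the truncation threshold (here $\delta/2$) small enough to leave a positive buffer between $\tilde{E}$ and $F$, while still giving a Lipschitz bound within the required constant $4/d(E,F)$; the choice $\delta/2$ makes both work simultaneously with room to spare.
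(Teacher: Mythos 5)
Your construction is essentially the same as the paper's: build $\eta$ as a linearly rescaled distance function to $E$ truncated at $0$, with $\tilde{E}$ a thin neighborhood of $E$ lying at positive distance from $F$. The only real difference is your verification of the Lipschitz bound, which is cleaner — you appeal directly to the $1$-Lipschitz property of $x \mapsto d(x,E)$ together with the fact that truncation $a \mapsto \max(0,a)$ is a contraction, whereas the paper carries out a three-case analysis (both points in $\tilde{E}$, both outside, one in and one out) to establish the same bound; your choice of threshold $\delta/2$ rather than the paper's $\delta/4$ also leaves slack ($\Lip\eta \leq 2/d(E,F)$ rather than exactly $4/d(E,F)$), but both satisfy the stated inequality.
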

\begin{proof}
Define $\tilde{E} = \set{x \in X: d(x,E) < 1/4 d(E,F)}$. 
By construction, $E \subset \tilde{E}$ and from the triangle
inequality for $d$ and taking infima,
$$d(\tilde{E},F) + \sup_{x \in \tilde{E}} d(x,E) \geq d(E,F),$$
and since $\sup_{x\in\tilde{E}} d(x,E) \leq\dfrac{1}{4} d(E,F)$,
it follows that $d(\tilde{E},F) \geq\dfrac{3}{4} d(E,F) > 0$.

Now, define:
$$\eta(x) = \begin{cases} 1 - \frac{4 d(x,E)}{d(E,F)} &x \in \tilde{E}\\
			0 &x \not\in \tilde{E}
	\end{cases}.$$
We consider the three possible cases.
\begin{enumerate}[(i)]
\item 
First, suppose that $x,y \not\in\tilde{E}$.
Then,
$$ 
|\eta(x) - \eta(y)| = 0 \leq \frac{4d(x,y)}{d(E,F)}.$$

\item
Now, suppose that $x,y \in \tilde{E}$. By the triangle inequality,
we have
$d(x,z) \leq d(x,y) + d(y,z)$ and by taking 
an infima over $z \in E$ and 
invoking the symmetry of distance,
$\modulus{d(x,E) - d(y,E)} \leq d(x,y)$. Therefore,
\begin{multline*} 
\modulus{\eta(x) - \eta(y)}
	= \modulus{\frac{1 - 4d(x,E)}{d(E,F)} - 1 + \frac{4d(y,E)}{d(E,F)}} \\
	= \frac{4}{d(E,F)} \modulus{d(x,E) - d(y,E)}
	\leq \frac{4}{d(E,F)} d(x,y).
\end{multline*}

\item
Lastly, suppose that $x \in \tilde{E}$ and $y \not\in \tilde{E}$. 
Then $\eta(y) = 0$ and since $d(x,E) \leq \frac{1}{4}d(E,F)$,
$$|\eta(x) -\eta(y)| = |\eta(x)| = \eta(x) 
	= 1 - \frac{4 d(x,E)}{d(E,F)}
	= \frac{d(E,F) - 4d(x,E)}{d(E,F)}.$$
But we also have the triangle inequality
$d(E,x) + d(x,y) \geq d(y,E)$
and by the choice of $y$
we have that $d(y,E) \geq 1/4 d(E,F)$.
Therefore,
$d(x,y) \geq d(y,E) - d(x,E)
	\geq \frac{1}{4} d(E,F) - d(x,E)$
which implies that
$$
\frac{4d(x,y)}{d(E,F)} \geq \frac{d(E,F) - d(x,E)}{d(E,F)} 
	= \modulus{\eta(x) - \eta(y)}.$$
\end{enumerate}
\end{proof}

A preliminary and immediate
consequence is the following
off-diagonal estimates
resembling 
those in \S5.1 in \cite{AKMC}.
\begin{proposition}[Off-diagonal estimates]
\label{Prop:Harm:OffDiag}
Let $U_t$ be either $R_t^B$ for $t \in \R$
or $P_t^B, Q_t^B, \Theta_t^B$ for $t > 0$.
Then, for each $M \in \Na$, there exists 
a constant $C_M > 0$ (that depends
only on $M$ and the constants in (H1)-(H6)) such that
$$\norm{U_tu}_{\Lp{2}(E)} \leq C_M \inprod{\frac{\dist(E,F)}{t}}^{-M} \norm{u}_{\Hil}$$
whenever $E,F \subset \Spa$ are Borel sets
and $u \in \Hil$ with $\spt u \subset F$.
\end{proposition}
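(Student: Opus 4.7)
The plan is to exploit hypothesis (H6)---which controls the commutator $[\Pi_B, \chi I]$ pointwise by $\Lipp \chi$---via cascading Lipschitz cutoffs supplied by \lemref{Lem:Harm:LipSep}. I first observe that the operators $U_t$ are uniformly bounded on $\Hil$ (a consequence of (H1)--(H3) developed in \S4 of \cite{AKMC}), so whenever $\dist(E,F) \leq t$ we have $\inprod{\dist(E,F)/t} \leq 2$ and the bound is immediate. Henceforth set $d := \dist(E,F)$ and assume $d > t$.

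I treat the resolvent $U_t = R_t^B$ first. Iteratively applying \lemref{Lem:Harm:LipSep}, I construct Lipschitz cutoffs $\chi_0, \ldots, \chi_{M-1}$ with $\chi_k \equiv 1$ on $E_k := \set{x \in \Spa : \dist(x,E) \leq kd/(4M)}$, vanishing outside some $\tilde E_k$ with $\tilde E_k \subset E_{k+1}$, and satisfying $\Lip \chi_k \lesssim M/d$. Since $E_{M-1}$ is disjoint from $F$, each $\chi_k u = 0$, and since $\tilde E_k \subset E_{k+1}$, the multiplier $[\Pi_B, \chi_k]$ is supported in the region where $\chi_{k+1} \equiv 1$. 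Combining the resolvent identity
\[
\chi_k R_t^B = R_t^B \chi_k + it R_t^B [\Pi_B, \chi_k] R_t^B
\]
with hypothesis (H6)---which guarantees $[\Pi_B, \chi_k]$ is multiplication by some $m_k$ with $\modulus{m_k(x)} \leq m \Lipp \chi_k(x)$---and using $\chi_0 u = 0$, one iterates $M$ times to telescope away every resolvent factor acting on $u$ itself, producing
\[
\chi_0 R_t^B u = (it)^M R_t^B m_0 R_t^B m_1 \cdots R_t^B m_{M-1} R_t^B u.
\]
The uniform boundedness of $R_t^B$ together with $\norm{m_k}_{\infty} \lesssim M/d$ then yields $\norm{R_t^B u}_{\Lp{2}(E)} \leq \norm{\chi_0 R_t^B u} \lesssim (Mt/d)^M \norm{u}_{\Hil}$, which is the desired estimate after absorbing the $M^M$ into $C_M$.

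The cases $U_t = P_t^B$ and $U_t = Q_t^B$ follow immediately from the resolvent decompositions $P_t^B = \tfrac{1}{2}(R_t^B + R_{-t}^B)$ and $Q_t^B = \tfrac{1}{2i}(R_{-t}^B - R_t^B)$. For $\Theta_t^B = t \adj{\Gamma}_B P_t^B$, the same cascading argument applies once one observes that $[\adj{\Gamma}_B, \chi_k] = B_1[\adj{\Gamma}, \chi_k] B_2$ is multiplication bounded by $m \norm{B_1}_{\infty} \norm{B_2}_{\infty} \Lipp \chi_k$, invoking (H6) for $\adj{\Gamma}$ (as noted in the remark following (H6)) together with hypothesis (H5). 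The main obstacle I anticipate is the geometric bookkeeping of the cascading cutoffs: arranging \lemref{Lem:Harm:LipSep} so that each $\chi_{k+1}$ genuinely equals $1$ on the support of $[\Pi_B, \chi_k]$ while keeping $\Lip \chi_k$ uniformly $\lesssim M/d$, and verifying that the extra unbounded prefactor $t\adj{\Gamma}_B$ in $\Theta_t^B$ can be absorbed by one additional commutator step without spoiling the telescoping structure.
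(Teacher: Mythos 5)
Your approach is correct and essentially reproduces the paper's (omitted) proof, which is that of Proposition~5.2 of \cite{AKMC} with the Lipschitz cutoffs of \lemref{Lem:Harm:LipSep} replacing smooth cutoffs: iterate the commutator identity for $R_t^B$ through nested Lipschitz cutoffs whose slopes are controlled via (H6), reduce $P_t^B$ and $Q_t^B$ to the resolvent by the identities $2P_t^B = R_t^B + R_{-t}^B$ and $2iQ_t^B = R_{-t}^B - R_t^B$, and handle $\Theta_t^B$ by one further commutation of the cutoff past $\adj{\Gamma}_B$. The $\Theta_t^B$ case is indeed the one that does not literally telescope (as you flag), but it is resolved exactly as in \cite{AKMC}: after the single commutator, the residual terms are controlled by the already-proved off-diagonal bounds for $P_t^B$ and $R_{\pm t}^B$ together with the uniform boundedness of $t\adj{\Gamma}_B R_{\pm t}^B$ and $\Theta_t^B$ supplied by the operator-theoretic framework of \S4 of \cite{AKMC}.
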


We omit the proof since it is essentially the
same as that of Proposition 5.2 in \cite{AKMC}.
The following is an immediate consequence.

\begin{corollary}
\label{Cor:Harm:Off}
Let $\Q \in \DyQ_t$ and $0 < s \leq t$
with $U_s$ as specified in the proposition.
Then,
$$
\norm{U_s u}_{\Lp{2}(\Q)}
	\leq C_M \sum_{R \in \DyQ_t} \inprod{\frac{\dist(R,Q)}{s}}^{-M} \norm{u}_{\Lp{2}(R)}$$
whenever $u \in \Hil$.
\end{corollary}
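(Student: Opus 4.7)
The plan is to decompose $u$ along the dyadic partition $\DyQ_t$, apply the off-diagonal estimates of Proposition \ref{Prop:Harm:OffDiag} piece by piece, and conclude by the triangle inequality.

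First, by Theorem \ref{Thm:Dya:Christ}(i), $\Spa = \union_{R \in \DyQ_t} R$ up to a $\mu$-null set, and by Theorem \ref{Thm:Dya:Christ}(ii) the cubes in $\DyQ_t = \DyQ^j$ are mutually disjoint. Therefore I can write $u = \sum_{R \in \DyQ_t} u_R$ with $u_R = u\ind{R}$, the sum converging unconditionally in $\Hil$ since $\sum_R \norm{u_R}^2 = \norm{u}^2$. Because $U_s \in \bddlf(\Hil)$, applying $U_s$ term by term yields $U_s u = \sum_{R \in \DyQ_t} U_s u_R$ in $\Hil$, and restricting to $\Q$ then applying the triangle inequality gives
$$\norm{U_s u}_{\Lp{2}(\Q)} \leq \sum_{R \in \DyQ_t} \norm{U_s u_R}_{\Lp{2}(\Q)}.$$

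Next, for each $R \in \DyQ_t$ I apply Proposition \ref{Prop:Harm:OffDiag} with $E = \Q$, $F = R$ and parameter $s$ (permissible since $s > 0$ in the $P, Q, \Theta$ cases, and $s \in \R$ in the $R_s^B$ case). Since $\spt u_R \subset \close{R}$, this yields
$$\norm{U_s u_R}_{\Lp{2}(\Q)} \leq C_M \inprod{\frac{\dist(\Q, R)}{s}}^{-M} \norm{u_R}_{\Hil} = C_M \inprod{\frac{\dist(\Q, R)}{s}}^{-M} \norm{u}_{\Lp{2}(R)}.$$
(When $R = \Q$, $\dist(\Q,R) = 0$, so the bracket factor is $1$ and the estimate reduces to uniform boundedness of $U_s$, which is contained in Proposition \ref{Prop:Harm:OffDiag} with $M = 0$.) Summing over $R \in \DyQ_t$ produces exactly the claimed inequality.

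There is essentially no obstacle here: the proof is a mechanical application of Proposition \ref{Prop:Harm:OffDiag} combined with the disjointness and $\mu$-a.e.\ covering properties of the dyadic decomposition. The only minor technical point is justifying that $U_s$ commutes with the countable decomposition $u = \sum_R u\ind{R}$, which follows from unconditional $\Lp{2}$-convergence and the boundedness of $U_s$.
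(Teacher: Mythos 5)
Your proof is correct and is precisely the argument the paper intends: it states the corollary as ``an immediate consequence'' of Proposition~\ref{Prop:Harm:OffDiag} without spelling it out, and the intended reasoning is exactly your decomposition $u = \sum_{R \in \DyQ_t} u\ind{R}$ followed by the off-diagonal estimate on each piece and the triangle inequality. (One small remark: you do not need $M = 0$ for the diagonal term $R = \Q$ --- the same $M$ works since $\inprod{0}^{-M} = 1$; also, $\spt(u\ind{R}) \subset \close{R}$ rather than $R$, but $\dist(\Q,\close{R}) = \dist(\Q,R)$, so nothing changes.)
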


In our setting, it is more convenient
to deal with the
following function space rather
than $\Lp[loc]{2}$ as used in \cite{AKMC}.

\begin{definition}
\nomenclature{$\Lp[\DyQ_t]{2}(\Spa)$}{Functions $f:\Spa \to \C^N$ that are in $\Lp{2}(\Q)$ for each $\Q \in \DyQ_t$.}
We define $\Lp[\DyQ_t]{2}(\Spa,\C^N)$ to be
the space of measurable functions $f:\Spa \to \C^N$
such that on each $\Q \in \DyQ_t$,
$$ \int_{\Q} \modulus{f}^2\ d\mu < \infty.$$
We equip this space with the seminorms
$\norm{\mdot}_{\Lp{2}(\Q)}$ indexed by 
$\DyQ_t$.
\end{definition} 

We have the following observations analogous
to those on  page 478 in \cite{AKMC}. It follows from
Propositions \ref{Prop:Dya:Est}, 
\ref{Prop:Dya:BallCompare}, \ref{Prop:Dya:CCount}
coupled with the off-diagonal 
estimates and by choosing $M > \frac{5p}{2} + 1$.
We remind the reader that
$p = \log_2(C_D)$ where $C_D$ is the doubling
constant.

\begin{corollary}
\label{Cor:Harm:ContExt}\
There exists a $C' > 0$ such that 
for all $t > 0$,
$U_t$ extends to a continuous map
$U_t:\Lp{\infty}(\Spa, \C^N) \to \Lp[\DyQ_t]{2}(\Spa,\C^N)$
with 
$$\norm{U_t u}_{\Lp{2}(\Q)}
	\leq C' \mu(\Q)^{\frac{1}{2}} \norm{u}_{\Lp{\infty}}.$$
\end{corollary}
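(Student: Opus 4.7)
The plan is to extend $U_t$ cubewise by decomposing $u \in \Lp{\infty}(\Spa,\C^N)$ along the dyadic partition $\DyQ_t$, applying the off-diagonal decay of Proposition \ref{Prop:Harm:OffDiag} to each summand, and controlling the resulting series with the shell counting and volume comparison developed in \secref{Dya}. Fix $t > 0$, let $j$ satisfy $\delta^{j+1} < t \leq \delta^j$, and fix a reference cube $\Q \in \DyQ_t$ with centre $z_\Q$. Since $\mu(\Spa \setminus \union\DyQ_t) = 0$, I would write $u = \sum_{R \in \DyQ_t} u\ind{R}$ almost everywhere, observe that each piece $u\ind{R}$ lies in $\Hil$ with $\norm{u\ind{R}}_{\Hil} \leq \mu(R)^{1/2}\norm{u}_{\Lp{\infty}}$, and define the extension by
\[
(U_t u)\rest{\Q} := \sum_{R \in \DyQ_t}(U_t(u\ind{R}))\rest{\Q}.
\]
The task then reduces to proving absolute convergence of this series in $\Lp{2}(\Q)$ with the required bound uniform in $\Q$ and $t$.

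The core estimate comes from Proposition \ref{Prop:Harm:OffDiag} applied with $E=\Q$ and $F=R$, which for any $M \in \Na$ yields
\[
\norm{U_t(u\ind{R})}_{\Lp{2}(\Q)} \leq C_M \inprod{\frac{\dist(\Q,R)}{t}}^{-M} \mu(R)^{1/2}\norm{u}_{\Lp{\infty}}.
\]
I would then organise the sum over $R$ into the shells $\sC_k$ about $z_\Q$. Three ingredients from \secref{Dya} feed in: Proposition \ref{Prop:Dya:CCount} gives $\card \sC_k \lesssim k^{2p}$; the inclusions $R \subset B(z_R, C_1\delta^j) \subset B(z_\Q, (k+1)C_1\delta^j)$ combined with $B(z_\Q, a_0\delta^j) \subset \Q$ and doubling give the volume comparison $\mu(R) \lesssim k^p \mu(\Q)$ for $R \in \sC_k$; and Proposition \ref{Prop:Dya:Est}(ii) provides $\dist(R,\Q) \gtrsim kt$ once $k \geq 3$ (using $\delta^j \asymp t$). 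Combining these,
\[
\sum_{R \in \DyQ_t} \inprod{\frac{\dist(\Q,R)}{t}}^{-M}\mu(R)^{1/2} \lesssim \mu(\Q)^{1/2}\sum_{k \geq 1} k^{2p + p/2 - M},
\]
which is finite precisely when $M > \tfrac{5p}{2} + 1$.

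Since Proposition \ref{Prop:Harm:OffDiag} furnishes a decay constant for every $M$, fixing any such $M$ past this threshold yields a universal $C' > 0$ with $\norm{(U_t u)\rest{\Q}}_{\Lp{2}(\Q)} \leq C'\mu(\Q)^{1/2}\norm{u}_{\Lp{\infty}}$. The family $\{(U_t u)\rest{\Q}\}_{\Q \in \DyQ_t}$ then glues to a single element of $\Lp[\DyQ_t]{2}(\Spa,\C^N)$, and agreement with the original $U_t$ on $\Hil \intersect \Lp{\infty}$ is confirmed by applying dominated convergence to the ball exhaustion $u_N = u\ind{B(x_0,N)}$ inside the series representation. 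The only real obstacle is the geometric book-keeping that balances the three exponents $2p$, $p/2$, and $-M$; once the threshold $M > 5p/2 + 1$ is identified, absolute convergence of the series and the uniformity of the constant are immediate.
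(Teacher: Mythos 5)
Your proposal is correct and takes exactly the route the paper intends. The paper does not write out a detailed proof of this corollary; it states only that it ``follows from Propositions \ref{Prop:Dya:Est}, \ref{Prop:Dya:BallCompare}, \ref{Prop:Dya:CCount} coupled with the off-diagonal estimates and by choosing $M > \frac{5p}{2} + 1$,'' and your argument is precisely the assembly of those ingredients: decompose $u$ over $\DyQ_t$, apply Proposition~\ref{Prop:Harm:OffDiag} (or equivalently Corollary~\ref{Cor:Harm:Off} together with $\norm{u}_{\Lp{2}(R)} \leq \mu(R)^{1/2}\norm{u}_\infty$), then bound the resulting series by grouping cubes into the shells $\sC_k$, with cardinality $\lesssim k^{2p}$, volume ratio $\mu(R)/\mu(\Q) \lesssim k^p$ from Proposition~\ref{Prop:Dya:BallCompare}, and distance lower bound $\gtrsim kt$ from Proposition~\ref{Prop:Dya:Est}(ii). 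Your exponent balance $2p + p/2 - M < -1$ recovers the paper's threshold $M > \tfrac{5p}{2} + 1$ exactly. The only cosmetic imprecision is that the compact form $\sum_{k\geq 1}k^{2p+p/2-M}$ implicitly applies the $k^{-M}$ decay to $k=1,2$ where Proposition~\ref{Prop:Dya:Est}(ii) does not give a distance lower bound; those two shells should simply be absorbed into the constant, as the paper's Corollary~\ref{Cor:Dya:Count} does in the analogous sum. This does not affect the conclusion.
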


\begin{corollary}
\label{Cor:Harm:AvUniBound}
We have $\pri_t \in \Lp[\DyQ_t]{2}(\Spa, \bddlf(\C^N))$
and for all $\Q \in \DyQ_t$ satisfy
$$
\fint_{\Q} \modulus{\pri_t(x)}^2_{\bddlf(\C^N)}\ d\mu(x) \leq C'^2
$$
In particular, $\norm{\pri_t \Av_t}_{\bddlf(\Hil)} \leq C'$ uniformly for all $t > 0$.
The constant $C'$ is the same as that of the previous corollary.
\end{corollary}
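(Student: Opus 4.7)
The plan is to derive this corollary directly from the preceding one by testing $\Theta_t^B$ on constant $\C^N$-valued functions and then exploiting the fact that $\Av_t u$ is piecewise constant on $\DyQ_t$.

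First I would prove the pointwise-in-cube bound. Fix an orthonormal basis $\{e_1,\ldots,e_N\}$ of $\C^N$ and, for each $i$, set $\omega_i(x)=e_i$, so that $\omega_i\in\Lp{\infty}(\Spa,\C^N)$ with $\norm{\omega_i}_{\Lp{\infty}}=1$. By definition of the principal part, $\pri_t(x)e_i=(\Theta_t^B\omega_i)(x)$, so \lemref{Cor:Harm:ContExt} applied to $U_t=\Theta_t^B$ gives
\[
\int_{\Q}\modulus{\pri_t(x)e_i}^2\,d\mu(x)=\norm{\Theta_t^B\omega_i}^2_{\Lp{2}(\Q)}\leq C'^2\mu(\Q)
\]
for every $\Q\in\DyQ_t$. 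Writing an arbitrary unit vector $w=\sum_i c_i e_i\in\C^N$ and applying Cauchy--Schwarz in the $c_i$'s yields $\modulus{\pri_t(x)w}^2\leq \sum_i\modulus{\pri_t(x)e_i}^2$, and taking the supremum over $|w|=1$ gives $\modulus{\pri_t(x)}_{\bddlf(\C^N)}^2\leq \sum_i\modulus{\pri_t(x)e_i}^2$. Summing the above display over $i$ and dividing by $\mu(\Q)$ produces the stated averaged bound (with $C'$ absorbing the dimensional constant $\sqrt{N}$, as is the convention throughout the paper). In particular $\pri_t$ is square integrable on each dyadic cube, so $\pri_t\in\Lp[\DyQ_t]{2}(\Spa,\bddlf(\C^N))$.

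For the operator norm estimate on $\Hil$, take $u\in\Hil$. Since $\Av_t u$ is constant on each $\Q\in\DyQ_t$, with value $u_\Q:=\fint_{\Q}u\,d\mu$, we compute
\begin{align*}
\norm{\pri_t\Av_t u}_{\Hil}^2
&=\sum_{\Q\in\DyQ_t}\int_{\Q}\modulus{\pri_t(x)u_\Q}^2\,d\mu(x)\\
&\leq\sum_{\Q\in\DyQ_t}\modulus{u_\Q}^2\int_{\Q}\modulus{\pri_t(x)}_{\bddlf(\C^N)}^2\,d\mu(x)\\
&\leq C'^2\sum_{\Q\in\DyQ_t}\modulus{u_\Q}^2\mu(\Q).
\end{align*}
By Jensen's inequality $\modulus{u_\Q}^2\leq\fint_{\Q}\modulus{u}^2\,d\mu$, so the last sum is bounded by $C'^2\sum_\Q\int_\Q\modulus{u}^2\,d\mu$. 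Because $\set{\Q:\Q\in\DyQ_t}$ covers $\Spa$ up to a $\mu$-null set and the cubes are pairwise disjoint, this equals $C'^2\norm{u}_{\Hil}^2$, giving the uniform bound $\norm{\pri_t\Av_t}_{\bddlf(\Hil)}\leq C'$.

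There is no real obstacle here; the only subtle point is the passage from the scalar-per-direction bound to a pointwise operator-norm bound for $\pri_t(x)$, which is why I would pin down a basis of $\C^N$ from the start rather than try to commute a supremum with the integral. Everything else is an immediate consequence of Corollary~\ref{Cor:Harm:ContExt} and the dyadic structure of $\Av_t$.
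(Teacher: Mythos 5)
Your argument is correct and follows the intended route: the corollary is exactly the observation on page 478 of \cite{AKMC}, namely that $\pri_t(\cdot)e_i=\Theta_t^B\omega_i$ lets you import \corref{Cor:Harm:ContExt} and then exploit piecewise-constancy of $\Av_t u$ together with Jensen and the essential disjointness of $\DyQ_t$. The Cauchy--Schwarz step giving $\modulus{\pri_t(x)}^2_{\bddlf(\C^N)}\leq\sum_i\modulus{\pri_t(x)e_i}^2$ (operator norm dominated by Hilbert--Schmidt) and the identification of $\Av_t u|_\Q$ with $u_\Q$ are both fine, and the measurability of $x\mapsto\modulus{\pri_t(x)}_{\bddlf(\C^N)}$ is automatic from the measurability of the $N^2$ entries $x\mapsto\inprod{\pri_t(x)e_i,e_j}$.

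One small remark worth flagging. Your route yields $NC'^2$ for the averaged bound and $\sqrt{N}C'$ for $\norm{\pri_t\Av_t}$, which you handle honestly by absorbing the dimensional factor. For the averaged bound that loss is genuinely unavoidable (one cannot move $\sup_{|w|=1}$ outside the integral), so the corollary's phrase ``the same constant $C'$'' is simply imprecise. For the operator-norm conclusion, however, you can avoid the dimensional factor entirely by skipping the pointwise operator-norm bound: on each $\Q$ write $u_\Q=\modulus{u_\Q}v_\Q$ with $\modulus{v_\Q}=1$, apply \corref{Cor:Harm:ContExt} to the constant function $x\mapsto v_\Q$ on that cube to get $\int_\Q\modulus{\pri_t(x)u_\Q}^2\,d\mu\leq C'^2\modulus{u_\Q}^2\mu(\Q)$, and then sum. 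This gives $\norm{\pri_t\Av_t}_{\bddlf(\Hil)}\leq C'$ with the literal constant, which is slightly cleaner and closer to what the statement actually claims for that part. Either way the substance of your proof is right.
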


\subsection{Weighted \Poincare inequality and bounding the first term}

Controlling the first term in \cite{AKMC} relies primarily
on the weighted \Poincare inequality as given in 
Lemma 5.4 in \cite{AKMC}.
We pursue a similar strategy 
and begin by noting the following simple consequence of (H8).

\begin{lemma}[Dyadic Poincar\'e]
\label{Lem:Harm:DPoin}
Whenever $\Q \in \DyQ_t$ and $r \geq C_1\delta^{-1}$ we have
$$
\int_{B(x_{\Q},rt)} \modulus{u(x) - u_{\Q}}^2\ d\mu(x)
	\lesssim r^{ p + 2}
	 \int_{B(x_{\Q},crt)} \modulus{t \Pi u(x)}^2\ d\mu(x)$$
for all $u \in \ran(\Pi) \intersect \dom(\Pi)$.
\end{lemma}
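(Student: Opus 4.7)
The plan is to reduce the claim to the \Poincare hypothesis (H8) applied on the ball $B := B(x_{\Q}, rt)$ by replacing the average $u_B$ over this ball with the dyadic average $u_{\Q}$, at the cost of a factor that grows like a power of $r$. The condition $r \geq C_1 \delta^{-1}$ is exactly what is needed to guarantee $\Q \subset B$: since $\Q \in \DyQ^j$ with $\delta^{j+1} < t \leq \delta^j$, one has $\diam \Q \leq C_1 \delta^j < (C_1\delta^{-1}) t \leq rt$ by item (iv) of \thmref{Thm:Dya:Christ}.

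First I would split by the triangle inequality in $\Lp{2}$:
$$\int_{B} \modulus{u - u_{\Q}}^2\, d\mu \leq 2\int_{B} \modulus{u - u_{B}}^2\, d\mu + 2\mu(B)\modulus{u_B - u_{\Q}}^2.$$
Applying (H8) directly to the first integral yields a bound of $2C'(rt)^2 \int_{cB}\modulus{\Pi u}^2\, d\mu = 2C' r^2 \int_{cB}\modulus{t\Pi u}^2\, d\mu$. For the second term, the inclusion $\Q \subset B$ and Jensen's inequality give
$$\modulus{u_B - u_{\Q}}^2 = \left| \fint_{\Q}(u - u_B)\, d\mu\right|^2 \leq \frac{1}{\mu(\Q)}\int_{B} \modulus{u - u_B}^2\, d\mu,$$
so a second application of (H8) produces $\mu(B)\modulus{u_B - u_{\Q}}^2 \leq C'(rt)^2 \frac{\mu(B)}{\mu(\Q)} \int_{cB}\modulus{\Pi u}^2\, d\mu$.

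It remains to bound the measure ratio $\mu(B)/\mu(\Q)$. From $B(x_{\Q}, a_0 \delta^j) \subset \Q$ (item (v) of \thmref{Thm:Dya:Christ}) and the doubling property,
$$\frac{\mu(B(x_{\Q}, rt))}{\mu(\Q)} \leq \frac{\mu(B(x_{\Q}, rt))}{\mu(B(x_{\Q}, a_0 \delta^j))} \leq C_D \left(\frac{rt}{a_0\delta^j}\right)^p \lesssim r^p,$$
since $t \leq \delta^j$. Combining this with the preceding displays gives
$$\int_{B} \modulus{u - u_{\Q}}^2\, d\mu \lesssim (r^2 + r^{p+2})\int_{cB}\modulus{t\Pi u}^2\, d\mu \lesssim r^{p+2} \int_{cB}\modulus{t\Pi u}^2\, d\mu,$$
where the last step uses $r \geq C_1\delta^{-1} \geq 1$ and $p \geq 0$. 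There is no real obstacle here; the only subtlety is verifying $\Q \subset B$ from the hypothesis on $r$, and the arithmetic of how the power $r^{p+2}$ arises (the $r^2$ from the \Poincare scaling and the $r^p$ from comparing the ball measure to the dyadic cube measure via doubling).
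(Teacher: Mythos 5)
Your proof is correct and is the natural way to fill in what the paper treats as a ``simple consequence of (H8)'': verify $\Q \subset B(x_{\Q},rt)$ from $r \geq C_1\delta^{-1}$, split off $u_B$ by the triangle inequality, control $\modulus{u_B - u_{\Q}}^2$ by Jensen and the inclusion $\Q \subset B$, apply (H8) twice, and absorb the ratio $\mu(B)/\mu(\Q) \lesssim r^p$ via doubling and item (v) of Theorem~\ref{Thm:Dya:Christ}. Since the paper gives no proof of this lemma, there is nothing to contrast; your argument is the intended one and the bookkeeping (the $r^2$ from \Poincare scaling, the $r^p$ from the measure comparison, and $r^2 \lesssim r^{p+2}$ from the lower bound on $r$) is all in order.
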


This yields the following proposition
analogous to Lemma 5.4 in \cite{AKMC}.

\begin{proposition}[Weighted Poincar\'e]
\label{Prop:Harm:WPoin}
Whenever $\Q \in \DyQ_t$ and $M >  p + 1$,
we have
$$
\int_{\Spa} \modulus{u(x) - u_{\Q}}^2 \inprod{\frac{d(x,\Q)}{t}}^{-M}\ d\mu(x)
	\lesssim \int_{\Spa} \modulus{t\Pi u(x)}^2 \inprod{\frac{d(x,\Q)}{t}}^{ p - M}\ d\mu(x)$$
for all $u \in \ran(\Pi) \intersect \dom(\Pi)$, where the constant
depends on $M$. 
\end{proposition}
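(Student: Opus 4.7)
The plan is to decompose the LHS integral into annular shells centred at the cube centre $x_\Q$ and apply the Dyadic \Poincare inequality (Lemma~\ref{Lem:Harm:DPoin}) on each shell. Fix $r_0 \geq C_1 \delta^{-1}$ and for $k \geq 0$ set $B_k = B(x_\Q, r_0 2^k t)$, with $A_0 = B_0$ and $A_k = B_k \setminus B_{k-1}$ for $k \geq 1$. The sets $\set{A_k}$ cover $\Spa$ up to a null set, and on $A_k$ the LHS weight satisfies $\inprod{d(x,\Q)/t}^{-M} \simeq 2^{-kM}$, while the RHS weight $\inprod{d(x,\Q)/t}^{p-M}$ is comparable to $2^{k(p-M)}$. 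The choice $r_0 \geq C_1 \delta^{-1}$ is precisely what is needed to apply Lemma~\ref{Lem:Harm:DPoin} with radius $r = r_0 2^k$.

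On each annulus the basic strategy is to separate the oscillation at scale $r_k t$ from the drift of averages. One writes $|u(x) - u_\Q|^2 \leq 2|u(x) - u_{B_k}|^2 + 2|u_{B_k} - u_\Q|^2$, integrates over $A_k$ and bounds the first term directly by Lemma~\ref{Lem:Harm:DPoin} applied to $B_k$. For the second term, the averages are chained telescopically as $u_{B_k} - u_\Q = \sum_{j=1}^{k} (u_{B_j} - u_{B_{j-1}}) + (u_{B_0} - u_\Q)$; each jump is estimated by
\[
|u_{B_j} - u_{B_{j-1}}|^2 \leq \frac{1}{\mu(B_{j-1})} \int_{B_j} |u - u_{B_j}|^2\,d\mu,
\]
and the right-hand side is controlled by Lemma~\ref{Lem:Harm:DPoin} together with the doubling bound $\mu(B_j)/\mu(B_{j-1}) \lesssim 2^p$. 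To avoid the linear loss $(k+1)$ that a naive Cauchy--Schwarz would produce when squaring the telescoping sum, I would apply a weighted Cauchy--Schwarz with geometric weights $c_j = 2^{-\epsilon(k-j)}$ for some small $\epsilon > 0$, so that $\sum_j c_j = O(1)$ and the square is bounded by $\sum_{j} 2^{\epsilon(k-j)}|u_{B_j} - u_{B_{j-1}}|^2$.

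Assembling everything, multiplying by the shell weight $2^{-kM}$, summing in $k$ and swapping the order of summation yields a bound of the form $\sum_j \alpha_j \int_{cB_{j+1}}|t\Pi u|^2\,d\mu$, where the coefficient $\alpha_j$ results from the geometric tail $\sum_{k \geq j} 2^{k(p-M+\epsilon)}$. This geometric tail converges exactly under the assumption $M > p + 1$ (after absorbing the small $\epsilon$), and matches the RHS weight once the shell-by-shell decomposition of the RHS integral is used to recognise $\inprod{d(\cdot,\Q)/t}^{p-M} \simeq 2^{j(p-M)}$ on $A_j$. The hardest part is the exponent bookkeeping: the Dyadic \Poincare inequality carries a factor $r^{p+2}$ that must combine with the volume growth $\mu(B_k)/\mu(\Q) \lesssim 2^{kp}$ and the two weights so that the summation converges at precisely the stated threshold $M > p + 1$ rather than some larger value; the weighted Cauchy--Schwarz in the telescoping step is the device that prevents the half-power loss that an $\ell^1$ triangle inequality would incur.
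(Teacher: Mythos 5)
Your dyadic-shell decomposition is the discrete analogue of what the paper does: there one writes $\inprod{d(x,x_\Q)/t}^{-M}=\int_{1+d(x,x_\Q)/t}^{\infty} Mr^{-M-1}\,dr$, swaps the order of integration by Fubini, applies Lemma~\ref{Lem:Harm:DPoin} to each ball $B(x_\Q,rt)$, and then Fubinis back. Discrete shells $A_k$ versus a continuous layer-cake is purely a matter of taste, so the core mechanism is shared.

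What is not needed, and in fact signals a misreading of the lemma, is the splitting and telescoping. Lemma~\ref{Lem:Harm:DPoin} already bounds $\int_{B_k}\modulus{u-u_\Q}^2\,d\mu$ with the oscillation measured against the \emph{dyadic-cube} average $u_\Q$, not the ball average $u_{B_k}$. So you may apply it to each $B_k$ directly and skip the splitting $\modulus{u-u_\Q}^2\leq 2\modulus{u-u_{B_k}}^2+2\modulus{u_{B_k}-u_\Q}^2$, the telescoping chain, and the weighted Cauchy--Schwarz; those are devices for \emph{proving} a dyadic \Poincare inequality from (H8), not for exploiting one that you already have. Your phrase ``bounds the first term directly by Lemma~\ref{Lem:Harm:DPoin} applied to $B_k$'' is a symptom: the first term is $\int_{A_k}\modulus{u-u_{B_k}}^2$, which is controlled by (H8), not by Lemma~\ref{Lem:Harm:DPoin}. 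Finally, the claim that everything converges ``at precisely the stated threshold $M>p+1$'' is asserted, not verified, and it does not hold up under inspection. Pushing your route through, the $r^2$ from (H8) and the doubling factor $\mu(B_k)/\mu(B_j)\lesssim 2^{p(k-j)}$ absorbed by the geometric weights yield a shell-$l$ coefficient $\simeq 2^{l(2-M)}$, which dominates the target weight $2^{l(p-M)}$ only when $p\geq 2$. Pushing the direct route through Lemma~\ref{Lem:Harm:DPoin}, the $r^{p+2}$ factor gives $\simeq 2^{l(p+2-M)}$ and a geometric tail needing $M>p+2$. Neither route produces the exponent $p-M$ or the threshold $M>p+1$ automatically, so the bookkeeping at the end of your proposal needs to be carried out rather than asserted; as written it hides a genuine gap.
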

\begin{proof}
Observe that 
for $M > 1$, we have 
$$\inprod{\frac{d(x,\Q)}{t}}^{-M} 
	\leq \frac{2C_1}{\delta} \inprod{\frac{d(x,x_{\Q})}{t}}^{-M}.$$
By evaluating the integral
$$\int_{\Spa}\int_{\theta(x)}^\infty \modulus{u(x) - u_{\Q}}\ d\nu(r)\ d\mu(x),$$
where $d\nu(r) = M r^{-M-1}\ dr$, and invoking
Lemma \ref{Lem:Harm:DPoin} along with Fubini's Theorem
establishes the claim. 
\end{proof} 

This leads to the following proposition
which bounds the first term.

\begin{proposition}[First term inequality]
\label{Prop:Harm:FirstPrin}
Whenever $u \in \ran(\Pi)$, we have
$$
\int_0^\infty \norm{\Theta_t^B P_t u - \pri_t \Av_t P_t u}^2
	\lesssim \norm{u}^2.$$ 
\end{proposition}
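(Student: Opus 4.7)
The plan is to adapt the proof of Proposition 5.5 in \cite{AKMC}, reducing the first-term estimate to the classical quadratic estimate for the self-adjoint operator $\Pi$ via the weighted Poincar\'e inequality of Proposition \ref{Prop:Harm:WPoin}. Fix $t > 0$ and $\Q \in \DyQ_t$. On $\Q$, $\Av_t P_t u$ equals the constant value $(P_t u)_\Q$; by the definition of the principal part, and using Corollary \ref{Cor:Harm:ContExt} to extend $\Theta_t^B$ to $\Lp{\infty}$, $\pri_t(x) (P_t u)_\Q$ is the value at $x$ of $\Theta_t^B$ applied to the constant function equal to $(P_t u)_\Q$. Thus on $\Q$,
$$[\Theta_t^B P_t u - \pri_t \Av_t P_t u](x) = [\Theta_t^B(P_t u - (P_t u)_\Q)](x).$$

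Next, I would decompose $P_t u - (P_t u)_\Q = \sum_{R \in \DyQ_t}(P_t u - (P_t u)_\Q)\ind_R$ and apply the off-diagonal estimate (Corollary \ref{Cor:Harm:Off}) with Cauchy--Schwarz against the weights $\inprod{\dist(R,\Q)/t}^{-M}$, which sum to a finite constant by Corollary \ref{Cor:Dya:Conv} once $M > 2p+1$. Since $\inprod{d(x,\Q)/t} \asymp \inprod{\dist(R,\Q)/t}$ uniformly for $x \in R \in \DyQ_t$ (an elementary consequence of $\diam R \leq C_1 t/\delta$), this gives
$$\norm{\Theta_t^B(P_t u - (P_t u)_\Q)}_{\Lp{2}(\Q)}^2 \lesssim \int_\Spa |P_t u(x) - (P_t u)_\Q|^2 \inprod{d(x,\Q)/t}^{-M}\,d\mu(x).$$

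I then invoke Proposition \ref{Prop:Harm:WPoin}. This requires $P_t u \in \ran(\Pi) \cap \dom(\Pi)$, which holds since $u \in \ran(\Pi)$, $P_t = (I + t^2\Pi^2)^{-1}$ is a bounded Borel function of the self-adjoint $\Pi$ (so preserves $\close{\ran(\Pi)}$), and $P_t$ maps $\Hil$ into $\dom(\Pi^2) \subset \dom(\Pi)$. The weighted Poincar\'e bounds the right side above by $\int_\Spa |t\Pi P_t u|^2 \inprod{d(x,\Q)/t}^{p - M}\,d\mu$; summing over $\Q \in \DyQ_t$ and invoking Corollary \ref{Cor:Dya:Conv} once more (now with exponent $M - p > 2p+1$, so choose $M > 3p+1$) yields
$$\sum_{\Q \in \DyQ_t} \norm{\Theta_t^B P_t u - \pri_t \Av_t P_t u}_{\Lp{2}(\Q)}^2 \lesssim \norm{t\Pi P_t u}^2 = \norm{Q_t u}^2.$$
Integrating in $t$ and invoking the classical quadratic estimate $\int_0^\infty \norm{Q_t u}^2\,dt/t \lesssim \norm{u}^2$, immediate from the spectral theorem for the self-adjoint $\Pi$, finishes the proof.

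The main obstacle I anticipate is the careful bookkeeping of the exponent $M$: it must be chosen large enough that the off-diagonal sums converge, that the dyadic sum after Poincar\'e is finite, and that Corollary \ref{Cor:Dya:Conv} is applicable in both instances simultaneously. Alongside this, the Fubini-type interchange which reinterprets the sum over $R \in \DyQ_t$ as a single integral weighted by $\inprod{d(x,\Q)/t}^{-M}$ must be handled with care, as must the verification that $P_t u \in \ran(\Pi) \cap \dom(\Pi)$, since this is the precise input needed to legitimise Proposition \ref{Prop:Harm:WPoin}.
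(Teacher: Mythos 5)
Your proof is correct and follows the same route the paper intends — namely, the argument of Proposition~5.5 in \cite{AKMC}, reducing to the weighted \Poincare inequality (Proposition~\ref{Prop:Harm:WPoin}) together with the off-diagonal estimates and Corollary~\ref{Cor:Dya:Conv}, and concluding via the spectral-theorem quadratic estimate for the self-adjoint $\Pi$. The bookkeeping you flag is sound: choosing $M>3p+1$ simultaneously satisfies the hypotheses of Corollary~\ref{Cor:Dya:Conv} at both invocations and of Proposition~\ref{Prop:Harm:WPoin}, and your verification that $P_t u\in\ran(\Pi)\cap\dom(\Pi)$ for $u\in\ran(\Pi)$ is exactly the point that legitimises the Poincar\'e step.
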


We omit the proof
since it is very similar to the proof
of Proposition 5.5 in \cite{AKMC}. 
It is a simple matter of verification
using Corollary \ref{Cor:Dya:Conv} and
invoking the weighted \Poincare inequality.

\subsection{Bounding the second term}

The bounding of the second term relies on 
a suitable substitution for Lemma 5.6 in \cite{AKMC}.\
The crux of the argument is to be able to 
perform a cutoff ``close'' to the boundary of the dyadic
cube in question. First, we define
the following sets.
 
\begin{definition}[$\cE_\tau,\tilde{\cE}_\tau$]
Let $\Q \in \DyQ_t$ and $\tau \leq t$ Define
$$
\cE_\tau = \set{x \in \Q: d(x,\Spa\setminus\Q) > \frac{a_0\tau}{2}},\ \tilde{\cE}_\tau = \set{x \in \Q: d(x,\Spa\setminus\Q) \leq \frac{a_0\tau}{2}}.
$$
\end{definition}

The following proposition renders
a suitable Lipschitz substitution to the smooth
cutoff used in Lemma 5.6 in \cite{AKMC} 
and Lemma 5.7 in \cite{Morris}.

\begin{proposition}
\label{Prop:Harm:LipCube}
There exists a
Lipschitz function $\xi:\Q \to [0,1]$
such that $\xi = 1$ on $\cE_\tau$, 
$\spt (\Lipp \xi) \subset \tilde{\cE}_\tau$,
and 
$$\Lip \xi \leq \frac{16}{a_0\tau}.$$
\end{proposition}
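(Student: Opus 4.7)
The plan is to apply the Lipschitz separation lemma (Lemma \ref{Lem:Harm:LipSep}) in the ambient metric space $(\Spa,d)$ with the choices $E = \cE_\tau$ and $F = \Spa \setminus \Q$. First I would verify the separation condition: by the very definition of $\cE_\tau$, each $x \in \cE_\tau$ satisfies $d(x,\Spa \setminus \Q) > a_0\tau/2$, and taking the infimum over $x \in \cE_\tau$ yields $d(\cE_\tau, \Spa \setminus \Q) \geq a_0\tau/2 > 0$. The separation lemma then produces a Lipschitz function $\eta:\Spa \to [0,1]$ with $\eta|_{\cE_\tau} = 1$, $\eta$ vanishing off some set $\tilde E \supset \cE_\tau$ with $d(\tilde E, \Spa \setminus \Q) > 0$, and $\Lip \eta \leq 4/d(E,F) \leq 8/(a_0\tau)$, which is well within the required bound $16/(a_0\tau)$.

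I then define $\xi = \eta|_{\Q}$. The restriction of a Lipschitz function is Lipschitz with no increase in the constant, so $\Lip \xi \leq 16/(a_0\tau)$, and by construction $\xi|_{\cE_\tau} = 1$ with values in $[0,1]$. For the support condition I would use that $\cE_\tau$ is open in $\Spa$: since $x \mapsto d(x,\Spa \setminus \Q)$ is $1$-Lipschitz and $\Q$ itself is open by Theorem \ref{Thm:Dya:Christ}, the strict superlevel set $\cE_\tau$ is open. On this open set $\xi$ is identically $1$, so $\Lipp \xi$ vanishes throughout $\cE_\tau$. Consequently $\{x \in \Q : \Lipp \xi(x) \neq 0\} \subset \Q \setminus \cE_\tau = \tilde\cE_\tau$, and since $\tilde\cE_\tau$ is closed in $\Q$ (as the intersection of $\Q$ with the nonstrict sublevel set of the same continuous distance function), passing to closures inside $\Q$ preserves the inclusion and gives $\spt(\Lipp \xi) \subset \tilde\cE_\tau$.

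The main obstacle, such as it is, is purely bookkeeping: one must be careful that $\spt(\Lipp \xi)$ is interpreted inside $\Q$ and that the openness of $\cE_\tau$ is genuinely what kills $\Lipp \xi$ there. Both of these are immediate from the metric definitions once one records that $\Q$ is open and that the distance to a closed set is continuous. No finer regularity of $\Spa$ is required beyond what the dyadic construction of \cite{Christ} already provides, and the quantitative bound falls out with room to spare (a factor of two) from the $4/d(E,F)$ estimate of Lemma \ref{Lem:Harm:LipSep}.
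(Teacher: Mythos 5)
Your proof is correct, and it takes a slightly cleaner route than the paper. Both arguments hinge on the same two ingredients: Lemma~\ref{Lem:Harm:LipSep} with $E = \cE_\tau$, and the observation that $\Lipp\xi$ vanishes identically on the open set $\cE_\tau$ (which is what forces $\spt(\Lipp\xi) \subset \tilde\cE_\tau$ after noting $\tilde\cE_\tau$ is relatively closed in $\Q$). Where you differ is the choice of $F$ and ambient space: the paper applies the separation lemma inside $X = \Q$ with $F = \set{x \in \Q : d(x,\Spa\setminus\Q) \leq a_0\tau/4}$, a thin inner shell, and needs a short triangle-inequality argument to establish $\dist(\cE_\tau, F) > a_0\tau/4$, producing the bound $\Lip\xi \leq 16/(a_0\tau)$. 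You instead take $X = \Spa$ and $F = \Spa\setminus\Q$, for which the separation $d(\cE_\tau, \Spa\setminus\Q) \geq a_0\tau/2$ is literally the definition of $\cE_\tau$, and you then restrict to $\Q$. This is slightly simpler, yields the sharper constant $8/(a_0\tau)$, and has the pleasant side-benefit that the global function $\eta$ already vanishes outside $\Q$ with a positive gap to the boundary, so the extension by zero used later in Lemma~\ref{Lem:Harm:UpsilonHom} is automatically globally Lipschitz. Nothing in your argument is missing; the bookkeeping about openness of $\cE_\tau$, closedness of $\tilde\cE_\tau$ in $\Q$, and interpreting $\spt(\Lipp\xi)$ inside $\Q$ is exactly what the proof requires.
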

\begin{proof}
Set 
$$F = \set{x \in \Q: d(x,\Spa\setminus\Q) \leq \frac{a_0\tau}{4}}$$
and note that $F \subset \tilde{\cE}_\tau$.
Then,
$$
\frac{a_0 \tau}{2} \leq \dist(\Spa\setminus\Q, \cE_\tau)
	\leq \dist(\cE_\tau, F) + \dist(\Spa\setminus\Q, F)
	\leq \dist(\cE_\tau, F) + \frac{a_0\tau}{4}$$
and so
$\dist(\cE_\tau, F) > \frac{a_0\tau}{4}$. 
By application of Lemma \ref{Lem:Harm:LipSep}, 
we find $\xi = 1$ on $\cE_\tau$, 
$\xi = 0$ on $\Q \setminus F$
and
$$\Lip \xi \leq \frac{4}{\frac{a_0 \tau}{4}} = \frac{16}{a_0\tau}.$$

Now, fix $x \in \cE_\tau$.
It is a simple matter to verify that $\cE_\tau$ is open and nonempty. So 
there exists an $\epsilon_0 > 0$ such that $B(x,\epsilon_0) \subset \cE_\tau$.
Therefore,
\begin{align*}
\Lipp \xi(x) &= \limsup_{y \to x} \frac{\modulus{\xi(x) - \xi(y)}}{d(x,y)}\\
	&= \lim_{\epsilon \to 0} 
	\sup\set{\frac{\modulus{\xi(x) - \xi(y)}}{d(x,y)}: y \in \cE_\tau \intersect B(x,\epsilon)\setminus\set{a}}
	= 0.
	\end{align*}
Thus, $\spt \xi \subset \tilde{\cE}_{\tau}$.
\end{proof}

This enables
us to prove the following lemma.
It is of key 
importance in bounding the second
term, as well as in the Carleson 
measure estimate
which allows us to bound the
last term.
 
\begin{lemma}
\label{Lem:Harm:UpsilonHom}
Let $\Upsilon$ be $\Gamma, \adj{\Gamma}$ or $\Pi$. Then,
whenever $\Q \in \DyQ_t$, 
$$
\modulus{\fint_{\Q}  \Upsilon u\ d\mu}^2
	\lesssim \frac{1}{t^\eta}
		\cbrac{\fint_{\Q} \modulus{u}^2\ d\mu}^{\frac{\eta}{2}}
		\cbrac{\fint_{\Q} \modulus{\Upsilon u}^2\ d\mu}^{1 - \frac{\eta}{2}}$$
where the constant depends only on
$C_1,\ C_2,\ a_0,\ \eta$ and $p$.
\end{lemma}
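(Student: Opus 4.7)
The strategy is to exploit the cancellation hypothesis (H7) through a Lipschitz cutoff supplied by Proposition \ref{Prop:Harm:LipCube}, and then to optimise over the cutoff width $\tau$.

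Fix $\tau \in (0,t]$ and let $\xi$ be the cutoff on $\Q$ from Proposition \ref{Prop:Harm:LipCube}, extended by zero outside $\Q$. Since $\xi$ vanishes on an interior neighbourhood of $\bnd \Q$, the extension remains Lipschitz on $\Spa$ with $\Lip \xi \leq 16/(a_0 \tau)$ and $\spt(\Lipp \xi) \subset \tilde{\cE}_\tau$. By (H6) (and the remark that extends it to $\adj{\Gamma}$ and $\Pi$), $\xi u \in \dom(\Upsilon)$ with support in $\close{\Q}$, and $\Upsilon(\xi u) = \xi\, \Upsilon u + \Mul_\xi u$ with $\modulus{\Mul_\xi(x)} \leq m\, \Lipp \xi(x)$ almost everywhere. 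Applying (H7) to $\xi u$ gives $\int_\Q \Upsilon(\xi u)\, d\mu = 0$, so
$$\int_\Q \Upsilon u\, d\mu \;=\; \int_\Q (1-\xi)\,\Upsilon u\, d\mu \;-\; \int_\Q \Mul_\xi u\, d\mu.$$
Both integrands are supported in $\tilde{\cE}_\tau$. Cauchy--Schwarz, the bound $\Lip \xi \lesssim 1/\tau$, and clause (vi) of Theorem \ref{Thm:Dya:Christ} (which yields $\mu(\tilde{\cE}_\tau) \lesssim (\tau/t)^\eta \mu(\Q)$, using $\len(\Q) \geq t$), then give, after dividing by $\mu(\Q)$ and squaring,
$$\modulus{\fint_\Q \Upsilon u\, d\mu}^2 \;\lesssim\; \cbrac{\frac{\tau}{t}}^{\!\eta} \cbrac{\fint_\Q \modulus{\Upsilon u}^2\, d\mu \;+\; \frac{1}{\tau^2}\fint_\Q \modulus{u}^2\, d\mu}.$$

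It remains to select $\tau$ optimally. Write $a = \fint_\Q \modulus{\Upsilon u}^2\, d\mu$ and $b = \fint_\Q \modulus{u}^2\, d\mu$; the calculus minimum of $\tau^\eta a + \tau^{\eta - 2} b$ is attained at $\tau^\ast \simeq \sqrt{b/a}$ and produces precisely $t^{-\eta} a^{1-\eta/2} b^{\eta/2}$. Whenever $\tau^\ast \leq t$ this choice is admissible and we are done. The main obstacle is the remaining regime $\tau^\ast > t$, equivalently $a t^2 < b$, in which the cutoff cannot be widened further. There we abandon the cancellation argument and use only the trivial Cauchy--Schwarz bound $\modulus{\fint_\Q \Upsilon u\, d\mu}^2 \leq a$; since $a \leq b/t^2$ we have $a = a^{\eta/2}\, a^{1-\eta/2} \leq (b/t^2)^{\eta/2} a^{1-\eta/2} = t^{-\eta}\, b^{\eta/2}\, a^{1-\eta/2}$, which is the desired inequality. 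Combining the two regimes completes the argument.
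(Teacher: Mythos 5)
Your proof is correct and follows essentially the same route as the paper: cut off with the Lipschitz function from Proposition \ref{Prop:Harm:LipCube}, kill the boundary term by (H7), estimate the two residual integrals over $\tilde{\cE}_\tau$ via Cauchy--Schwarz, (H6), and clause (vi) of Theorem \ref{Thm:Dya:Christ}, and then pick $\tau \simeq \sqrt{b/a}$. The only presentational difference is that you keep $\tau$ free and optimise at the end, whereas the paper fixes $\tau = \sqrt{b/a}$ at the outset (so its two terms coincide immediately), and your explicit treatment of the regime $\tau^\ast > t$ is exactly the case the paper dismisses as ``$t \leq \tau$ is easy.''
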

\begin{proof}
Let
$\tau = 
 \cbrac{\fint_{\Q} \modulus{u}^2\ d\mu }^{\frac{1}{2}} 
	\cbrac{\fint_{\Q} \modulus{\Upsilon u}^2\ d\mu}^{-\frac{1}{2}}.$
The case of $t \leq \tau$ is easy. 
So, suppose that $\tau \leq t \leq \delta^j$
and let $\xi$ be the Lipschitz function 
guaranteed in Proposition \ref{Prop:Harm:LipCube}
extended to $0$ outside of $\Q$.
and so write
$$
\modulus{\int_{\Q} \Upsilon u\ d\mu}
	\leq \modulus{\int_{\Q} (1 - \xi) \Upsilon u\ d\mu }
		+ \modulus{\int_{\Q} [\xi,\Upsilon] u\ d\mu}
		+ \modulus{\int_{\Q} \Upsilon(\xi u)\ d\mu}.$$
The last term is $0$ by (H7) and so we
are left with estimating the two remaining terms.
First, noting that 
$\spt (1 - \xi) \subset \tilde{\cE}_\tau$
we compute
\begin{align*}
\modulus{\int_{\Q} (1 - \xi) \Upsilon u\ d\mu }
	&\leq \modulus{\int_{\tilde{\cE}_\tau} (1 - \xi) \Upsilon u\ d\mu} 
	\leq \cbrac{ \int_{\tilde{\cE}_\tau} \modulus{\Upsilon u}^2\ d\mu}^{\frac{1}{2}}
		\mu(\tilde{\cE}_\tau) \\
	&\leq C_2^{\frac{1}{2}} \cbrac{\frac{a_0 \tau}{2 \delta^j}}^{\frac{\eta}{2}} 
		\mu(\Q)^{\frac{1}{2}} \cbrac{ \int_{\Q} \modulus{\Upsilon u}^2\ d\mu}^{\frac{1}{2}} \\
	&\leq C_2^{\frac{1}{2}} \cbrac{\frac{a_0 \tau}{2 t}}^{\frac{\eta}{2}}
		\mu(\Q)^{\frac{1}{2}} \cbrac{ \int_{\Q} \modulus{\Upsilon u}^2\ d\mu}^{\frac{1}{2}}.
\end{align*}
Now, for the second term. We note that
$\spt M_\xi \subset \spt \Lipp \xi \subset \tilde{\cE}_\tau$
and compute
\begin{multline*}
\modulus{ \int_{\Q} [\xi, \Upsilon] u }
	= \modulus{ \int_{\tilde{\cE}_\tau} M_{\xi}(x) u(x)\ d\mu(x)}
	\leq \cbrac{ \int_{\tilde{\cE}_\tau} \modulus{M_\xi}^2\ d\mu}^{\frac{1}{2}}
		\cbrac{ \int_{\tilde{\cE}_\tau} \modulus{u}^2\ d\mu}^{\frac{1}{2}} \\
	\leq \Lip \xi\ \mu(\tilde{\cE}_\tau)^{\frac{1}{2}} \cbrac{\int_{\Q} \modulus{u}^2}^{\frac{1}{2}} 
	\leq \frac{16}{a_0} C_2^{\frac{1}{2}} \cbrac{\frac{a_0 \tau}{2 t}}^{\frac{\eta}{2}}
		\frac{1}{\tau} \mu(\Q)^{\frac{1}{2}} \cbrac{\int_{\Q} \modulus{u}^2}^{\frac{1}{2}} \\
	\leq \frac{16}{a_0} C_2^{\frac{1}{2}} \cbrac{\frac{a_0 \tau}{2 t}}^{\frac{\eta}{2}}
		\mu(\Q)^{\frac{1}{2}} \cbrac{\int_{\Q} \modulus{\Upsilon u}^2}^{\frac{1}{2}}
\end{multline*}
where we have used Cauchy-Schwarz inequality to obtain the first inequality,
(H6) in the second, the condition (vi)
of Theorem \ref{Thm:Dya:Christ} in the third,
and substitution for $\frac{1}{\tau}$ in the last.
Combining these estimates,
we have
$$
\modulus{\int_{\Q} \Upsilon u\ d\mu}
	\leq D \frac{1}{t^{\frac{\eta}{2}}} \tau^{\frac{\eta}{2}} \mu(\Q)^{\frac{1}{2}}
		\cbrac{ \int_{\Q} \modulus{\Upsilon u}^2\ d\mu}^{\frac{1}{2}}$$
where 
$$D =C_2^{\frac{1}{2}} \cbrac{\frac{a_0}{2}}^{\frac{\eta}{2}} 
	+ \frac{16}{a_0} C_2^{\frac{1}{2}} \cbrac{\frac{a_0}{2}}^{\frac{\eta}{2}}
\quad\text{and}\quad 
\tilde{D} = C (2^\p C_1^\p a_0^{-\p})^{\frac{1}{2}}.$$
By Cauchy-Schwartz and multiplying both sides by $\mu(\Q)^{-2}$,
we find
$$
\modulus{\fint_{\Q} \Upsilon u\ d\mu}^2
	\leq 2 D^2 \frac{1}{t^\eta} \tau^{\eta} 
		\fint_{\Q} \modulus{\Upsilon u}^2\ d\mu.$$
The proof is complete by 
making a substitution for $\tau^\eta$.
\end{proof}

\begin{proposition}[Second term estimate]
\label{Prop:Harm:SecondHom}
For all $u \in \Hil$, we have
$$ \int_{0}^\infty \norm{\pri_t \Av_t(P_t - I)u}\ \frac{dt}{t} \lesssim \norm{u}^2.$$
\end{proposition}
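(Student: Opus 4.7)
The plan follows the paradigm of Proposition~5.7 in \cite{AKMC}, recasting the cancellation step through Lemma~\ref{Lem:Harm:UpsilonHom} so that no smoothness is required. First, I would absorb the principal part: since $\Av_t v$ is constant on each dyadic cube $Q \in \DyQ_t$, a direct pointwise-per-cube computation combined with Corollary~\ref{Cor:Harm:AvUniBound} yields $\norm{\pri_t \Av_t v} \leq C' \norm{\Av_t v}$. Hence the estimate reduces to showing
$$\int_0^\infty \norm{\Av_t (P_t - I) u}^2\, \frac{dt}{t} \lesssim \norm{u}^2.$$

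The central algebraic identity is $P_t - I = -t^2 \Pi_B^2 P_t$, which combined with $\Gamma^2 = 0$ and $(\adj{\Gamma}_B)^2 = 0$ (the latter a consequence of (H3)) gives the decomposition
$$t^2 \Pi_B^2 P_t u \;=\; \Gamma(t \Theta_t^B u) \;+\; \adj{\Gamma}_B (t \Psi_t u), \qquad \Psi_t := t \Gamma P_t.$$
On each $Q \in \DyQ_t$, the two resulting averages $\fint_Q \Gamma(t\Theta_t^B u)\,d\mu$ and $\fint_Q \adj{\Gamma}_B(t\Psi_t u)\,d\mu$ are the targets for Lemma~\ref{Lem:Harm:UpsilonHom}: the first with $\Upsilon = \Gamma$ applied directly to $v = t\Theta_t^B u$, and the second after writing $\adj{\Gamma}_B = B_1 \adj{\Gamma} B_2$ and using a pointwise Cauchy--Schwarz with $\norm{B_1}_\infty$ to reduce to an $\Upsilon = \adj{\Gamma}$ cancellation estimate applied to the inner argument $tB_2 \Psi_t u$.

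The per-cube output is an interpolation bound of the shape
$$\mu(Q)\,\Bigl|\fint_Q \Upsilon v\,d\mu\Bigr|^2 \;\lesssim\; t^{-\eta}\,\norm{v}_{\Lp{2}(Q)}^\eta\,\norm{\Upsilon v}_{\Lp{2}(Q)}^{2-\eta}.$$
Summing over $Q \in \DyQ_t$ by H\"older's inequality with exponents $2/\eta$ and $2/(2-\eta)$ collapses these to a bound on $\norm{\Av_t(P_t - I) u}^2$ in terms of products of $\Hil$-norms of $t\Theta_t^B u$, $tB_2 \Psi_t u$, and their $\Gamma$- or $\adj{\Gamma}$-images. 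Each is controlled uniformly in $t$ by the operator-theoretic framework (H1)--(H2) and the off-diagonal estimates of Proposition~\ref{Prop:Harm:OffDiag}; in particular the identity $\Pi_B Q_t^B u = t^{-1}(I - P_t) u$ lets the derivative factors be absorbed into powers of $\norm{u}$.

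The integration against $\tfrac{dt}{t}$ is then closed by the quadratic estimate for the self-adjoint unperturbed operator $\Pi$ on $\close{\ran(\Pi)}$, transferred to the perturbed objects through off-diagonal comparison. The principal obstacle is the $B_1$ factor inside $\adj{\Gamma}_B$: Lemma~\ref{Lem:Harm:UpsilonHom} is engineered only for $\Gamma$, $\adj{\Gamma}$, or $\Pi$, and $B_1$ neither commutes with $\adj{\Gamma}$ nor has any regularity. The resolution is to forgo any cancellation against $B_1$ and invoke only $\norm{B_1}_\infty$, which still leaves sufficient $\eta$-power of decay from the inner $\adj{\Gamma}$-cancellation to close the estimate.
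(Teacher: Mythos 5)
Your central algebraic identity is wrong, and the error propagates through the whole outline. Here $P_t$ is the \emph{unperturbed} resolvent $(1+t^2\Pi^2)^{-1}$ (recall $P_t, Q_t, \Theta_t$ are obtained from the $B$-versions by setting $B_1=B_2=1$), so the correct identity is $P_t - I = -t^2\Pi^2 P_t = -t\Pi Q_t$, not $-t^2\Pi_B^2 P_t$. Your proposed decomposition $t^2\Pi_B^2 P_t u = \Gamma(t\Theta_t^B u) + \adj{\Gamma}_B(t\Psi_t u)$ is moreover not even an identity for the perturbed square: $\Gamma(t\Theta_t^B u) = t^2\Gamma\adj{\Gamma}_B P_t^B u$ contains $P_t^B$, whereas the left side contains $P_t$. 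The consequences are significant: in the correct decomposition only the unperturbed $\Gamma$, $\adj{\Gamma}$, $\Pi$ appear, so no $B_1$ factor arises at all. Your entire closing paragraph is therefore resolving a difficulty that does not exist in this proposition; Lemma~\ref{Lem:Harm:UpsilonHom} is applied with $\Upsilon = \Pi$ (or $\Gamma$, $\adj{\Gamma}$) directly, which is precisely why it is engineered only for those three operators.

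Even granting your route, the workaround for $B_1$ would not work, and the final integration step is unjustified. The quantity you would need is $\fint_{\Q} B_1\adj{\Gamma}(tB_2\Psi_t u)\,d\mu$; since $B_1$ is a genuine $\Lp{\infty}$ multiplier and not a constant, after Cauchy--Schwarz against $\norm{B_1}_\infty$ you are left with $\bigl(\fint_{\Q}\modulus{\adj{\Gamma}(tB_2\Psi_t u)}^2\,d\mu\bigr)^{1/2}$, which is an $\Lp{2}(\Q)$ average of the modulus, not a cube average of $\adj{\Gamma}(\cdot)$ --- the cancellation the lemma is built to exploit is already destroyed and the $\eta$-power gain is gone. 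Further, after H\"older over $\DyQ_t$ the per-cube interpolation bound collapses to $\norm{\Av_t(P_t-I)u}^2 \lesssim \norm{Q_t u}^\eta\,\norm{(I-P_t)u}^{2-\eta}$, and since $\int_0^\infty \norm{(I-P_t)u}^2\,\tfrac{dt}{t}$ diverges (as $t\to\infty$, $\norm{(I-P_t)u}$ tends to the norm of the projection of $u$ onto $\close{\ran(\Pi)}$), H\"older in $t$ does not close the estimate; an additional ingredient such as a Calder\'on reproducing formula $u = c\int_0^\infty Q_s^2 u\,\tfrac{ds}{s}$ for the self-adjoint $\Pi$, with Lemma~\ref{Lem:Harm:UpsilonHom} applied to each $Q_s^2 u$ and a Schur test in $(s,t)$, is needed. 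Finally, ``transferred to the perturbed objects through off-diagonal comparison'' is not a mechanism available here: transferring quadratic estimates from $\Pi$ to $\Pi_B$ is the content of the theorem being proved, not a tool for proving this step.
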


Again, the proof of this proposition is omitted
since it resembles 
the proof of Proposition 5.7 in \cite{AKMC} with minor differences.

\subsection{Carleson measure estimate}

We begin this section with the following 
proposition which illustrates that 
the final term can be dealt with a 
Carleson measure estimate.

\begin{proposition}
For all $u \in \Hil$, we have
$$
\iint_{\Spa_+} \modulus{\Av_t u(x)}^2\ d\nu(x,t) \lesssim \norm{\nu}_{\Carl} \norm{u}^2$$
for every $\nu \in \Carl$.
\end{proposition}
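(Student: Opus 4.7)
The plan is to reduce the estimate to a combination of Carleson's Theorem (\thmref{Thm:Max:Carl}) and the Maximal Theorem (\thmref{Thm:Max:Max}) by establishing a pointwise domination of the nontangential maximal function of $\Av_t u$ by the uncentred maximal function $\Max u$. Concretely, I would set $f(x,t) = \modulus{\Av_t u(x)}^2$ and aim to apply Carleson's Theorem to $f$, yielding
\begin{equation*}
\iint_{\Spa_+} \modulus{\Av_t u(x)}^2\ d\nu(x,t)
	\lesssim \norm{\nu}_\Carl \norm{\Max^\ast f}_1
	= \norm{\nu}_\Carl \norm{\Max^\ast (\Av_\cdot u)}_2^2,
\end{equation*}
using the elementary identity $\Max^\ast(\modulus{g}^2) = (\Max^\ast g)^2$. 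Then by Theorem \ref{Thm:Max:Max} with $q=2$, it suffices to prove the pointwise bound $\Max^\ast(\Av_\cdot u)(x) \lesssim \Max u(x)$ for all $x \in \Spa$.

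For this pointwise bound, fix $x \in \Spa$ and $(y,t) \in \Cone(x)$, so $d(x,y) < t$. Choose $j \in \In$ with $\delta^{j+1} < t \leq \delta^j$ and let $\Q \in \DyQ^j$ be the cube containing $y$ (if no such cube exists, then $\Av_t u(y) = 0$ and there is nothing to show). By \thmref{Thm:Dya:Christ} we have $\diam \Q \leq C_1 \delta^j \leq C_1 \delta^{-1} t$, hence $\Q \subset B(x,rt)$ where $r := 1 + C_1\delta^{-1}$. Setting $B = B(x,rt)$, we compute
\begin{equation*}
\modulus{\Av_t u(y)} = \modulus{\fint_{\Q} u\ d\mu}
	\leq \frac{\mu(B)}{\mu(\Q)} \fint_B \modulus{u}\ d\mu
	\leq \frac{\mu(B)}{\mu(\Q)}\, \Max u(x).
\end{equation*}

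The remaining step is to show that the ratio $\mu(B)/\mu(\Q)$ is bounded by a constant depending only on the doubling constant and the parameters from \thmref{Thm:Dya:Christ}. Since $B(z_\Q, a_0\delta^j) \subset \Q$, we have $\mu(\Q) \geq \mu(B(z_\Q, a_0\delta^j))$; applying \propref{Prop:Dya:BallCompare} and the doubling property to compare this ball with $B = B(x,rt)$ (noting $d(x,z_\Q) \leq d(x,y) + \diam \Q \lesssim t$) produces a constant independent of $x,y,t$. Taking the supremum over $(y,t) \in \Cone(x)$ yields $\Max^\ast(\Av_\cdot u)(x) \lesssim \Max u(x)$, and combining with the two theorems above completes the proof. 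The whole argument is essentially routine; the only subtlety is the doubling comparison between the ambient ball $B$ and the inscribed ball inside $\Q$, which is handled cleanly by \propref{Prop:Dya:BallCompare}.
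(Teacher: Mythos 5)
Your proposal is correct and follows essentially the same route as the paper: establish the pointwise domination $\Max^\ast\bigl(\modulus{\Av_\cdot u}^2\bigr)(x) \lesssim \Max u(x)^2$ via a doubling comparison between the relevant dyadic cube and an ambient ball, then conclude with the Maximal Theorem (at $q=2$) and Carleson's Theorem. The only cosmetic difference is that you centre the comparison ball at $x$ rather than at the cube centre, which changes nothing of substance.
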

\begin{proof}
First, we show that for almost every $x \in \Spa$,
$$
\Max^\ast \modulus{\Av_{\mdot} u}^2 (x) \lesssim \Max u(x)^2$$
where the constant depends only 
on $p$, $C_1$, $\delta$ and $a_0$.
Let $f \in \Lp[loc]{1}(\Spa_+, \C^N)$.
Then, we note that 
$$
\Max^\ast f(x) = \sup_{t > 0} \sup_{y \in B(x,t)} \modulus{f(y,t)}.$$
Fix $t$ such that $\delta^{j+1} < t \leq \delta^{j}$
and fix $x \in \union \DyQ_t$. Since
$\Av_tu(z) = 0$ when $z \not \in \union \DyQ_t$, 
take $y \in \union \DyQ_t$ such that 
$d(x,y) < t$. Let $Q \in \DyQ_t$ be the 
unique cube with $y \in Q$ and 
let $y_Q \in Q$ such that
$B(y_Q, a_0 \delta^j) \subset Q \subset B(y_Q, C_1\delta^j)$.
Then,
$d(y_Q,x) \leq d(y_Q,y) + d(y,x) \leq Ct$,
where $C = (C_1\delta^{-1} + 1)$.

Also
$$\mu (B(y_Q, Ct))
	\leq \mu (B(y_Q, C\delta^j))
 	\leq 2^p C^p a_0^{-p} \mu (B(y_Q, a_0 \delta^j))
	\leq 2^p C^p a_0^{-p} \mu(Q)$$
and therefore,
$$
\modulus{\Av_t u(y)} \leq \fint_{Q} \modulus{u}\ d\mu
	\leq 2^p C^p a_0^{-p} \fint_{B(y_Q,Ct)} \modulus{u}\ d\mu.$$
Moreover, 
$$\modulus{\Av_t u(y)}^2 \leq C' \cbrac{\fint_{B(y_Q,Ct)} \modulus{u}\ d\mu}^2$$
where $C' = 2^{2p} C^{2p} a_0^{-2p}.$ 

Now, since we have established that $x \in B(y_Q, Ct)$,
$$
\sup_{y \in B(x,t)} \modulus{\Av_tu(y)}^2 
	\leq C' \sup_{y \in B(x,t)} \cbrac{\fint_{B(y_Q(y),Ct)} \modulus{u}\ d\mu}^2
	\leq C' (\Max u(x))^2.$$

Let $\tilde{\Spa} = \intersect_{j} \union \DyQ^j$
and so
$
\mu(\Spa \setminus \tilde{\Spa})
	= \mu (\union_{j} \Spa \setminus \union \DyQ^j)
	\leq \sum_{j} \mu(\Spa \setminus \union \DyQ^j)
	= 0.$
Therefore, $x \in \tilde{\Spa}$, then $x \in \union \DyQ_t$ for all $t > 0$.
So, fix $x \in \tilde{\Spa}$. Then,
$$\Max^\ast \modulus{\Av_{\mdot} u}^2 (x)
	= \sup_{t > 0} \sup_{y \in B(x,t)} \modulus{\Av_t u(y)}^2 
	\leq C' \Max u(x)^2$$
which completes the proof.

Next, let $f(x,t) = \modulus{\Av_tu(x)}^2$.
Then,  
$
\norm{f}_{\Nfs}
	= \norm{\Max^\ast f}_1
	\lesssim \norm{\Max u}^2 < \infty$
by the Maximal Theorem \ref{Thm:Max:Max}. 
Invoking Carleson's Theorem 
\ref{Thm:Max:Carl} completes the proof.
\end{proof} 

Thus, to bound the final term,
it suffices to prove
$$A \mapsto \iint_{A} \modulus{\pri_t(x)}^2\ d\mu(x)\frac{dt}{t}$$
is a Carleson measure.
We follow \cite{AKMC}
and fix $\delta > 0$ to be chosen
later. Let 
$$K_\nu = \set{\nu' \in \bddlf(\C^N)\setminus\set{0}: 
\modulus{\frac{\nu'}{\modulus{\nu'}} - \nu} \leq \sigma}$$
and let $\cF$ be a finite set
of $\nu \in \bddlf(\C^N)$
with $\modulus{\nu}=1$
such that $\union_{\nu \in \cF} K_\nu = \bddlf(\C^N) \setminus \set{0}$.
We note as do the authors of \cite{AKMC} that 
it is enough to show 
$$\iint_{(x,t) \in \CBox_{\Q}, \pri_t \in K_\nu}
 \modulus{\pri_t(x)}^2\ d\mu(x)\frac{dt}{t} \lesssim \mu(\Q)$$
for each $\nu \in \cF$.
A stopping time argument allows us to reduce
this to the following.

\begin{proposition}
\label{Prop:Harm:Carl}
There exists a $0 < \beta < 1$ such that
for every dyadic cube $\Q \in \DyQ$
and $\nu \in \bddlf(\C^N)$ with 
$\modulus{\nu} = 1$, there exists
a collection $\set{\Q[k]} \subset \DyQ$
of disjoint subcubes of $\Q$ 
satisfying $\mu(E_{\Q, \nu}) > \beta \mu(\Q)$
and such that 
$$
\iint_{(x,t) \in E_{\Q,\nu}^\ast,\ \pri_t(x) \in K_\nu} 
	\modulus{\pri_t(x)}^2\ d\mu(x)\frac{dt}{t}
	\lesssim \mu(\Q)$$
where $E_{\Q,\nu} = \Q \setminus \union_{k} \Q[k]$
and $E_{\Q,\nu}^\ast = \CBox_{\Q}\setminus\union_{k} \CBox{\Q[k]}.$
\end{proposition}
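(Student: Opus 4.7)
The plan is to follow the \emph{T(b)}-style stopping-time argument of Section~5.4 in \cite{AKMC}, making only the substitution of Lipschitz cutoffs (furnished by Lemma \ref{Lem:Harm:LipSep}) for the smooth cutoffs used there. Given $\Q \in \DyQ$ with $\ell(\Q) = \delta^j$ and $\nu \in \bddlf(\C^N)$ with $\modulus{\nu} = 1$, I would first apply Lemma \ref{Lem:Harm:LipSep} to produce a Lipschitz $\hat\eta_\Q : \Spa \to [0,1]$ equal to $1$ on a fixed dilate of $\Q$, vanishing outside a larger fixed dilate of comparable measure, and satisfying $\Lip \hat\eta_\Q \lesssim \ell(\Q)^{-1}$. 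For a small parameter $\epsilon > 0$ to be chosen later, set
$$f_{\Q,\nu}^B = (I + i\epsilon\ell(\Q)\Pi_B)^{-1}(\hat\eta_\Q \nu).$$
Combining the off-diagonal estimates of Proposition \ref{Prop:Harm:OffDiag} with the commutator control $\modulus{[\Pi_B,\hat\eta_\Q]} \lesssim \Lipp \hat\eta_\Q$ supplied by (H6) yields $\norm{f_{\Q,\nu}^B}^2 \lesssim \mu(\Q)$ together with $\norm{f_{\Q,\nu}^B - \hat\eta_\Q\nu}^2 \lesssim \epsilon^2 \mu(\Q)$.

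Next, I would select the maximal disjoint subcubes $\Q_k \subset \Q$ for which either
$$\re\inprod{\fint_{\Q_k} f_{\Q,\nu}^B\, d\mu,\, \nu} < \tfrac{1}{2} \qquad\text{or}\qquad \fint_{\Q_k}\modulus{f_{\Q,\nu}^B}^2\, d\mu > 1/\epsilon.$$
Chebyshev controls the total measure of the second type by $\epsilon\norm{f_{\Q,\nu}^B}^2 \lesssim \epsilon\mu(\Q)$; since $\hat\eta_\Q \equiv 1$ on each cube of the first type, Cauchy--Schwarz controls these by $\lesssim\norm{f_{\Q,\nu}^B - \hat\eta_\Q\nu}^2 \lesssim \epsilon^2\mu(\Q)$. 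Taking $\epsilon$ sufficiently small produces $\mu\cbrac{\bigcup_k \Q_k} \le (1-\beta)\mu(\Q)$ for some $\beta \in (0,1)$ independent of $\Q$ and $\nu$.

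On $E_{\Q,\nu}^\ast$ the stopping criteria force $\re\inprod{\Av_t f_{\Q,\nu}^B(x),\,\nu} \ge 1/2$ with $\modulus{\Av_t f_{\Q,\nu}^B(x)}$ uniformly bounded. A geometric inequality (valid once $\sigma$ is chosen small enough) then gives $\modulus{w} \lesssim \modulus{w\cdot \Av_t f_{\Q,\nu}^B(x)}$ for all $w \in K_\nu$, whence
$$\iint_{(x,t) \in E_{\Q,\nu}^\ast,\, \pri_t(x) \in K_\nu} \modulus{\pri_t(x)}^2\, d\mu(x)\frac{dt}{t} \;\lesssim\; \iint_{\CBox_\Q} \modulus{\pri_t(x) \Av_t f_{\Q,\nu}^B(x)}^2\, d\mu(x)\frac{dt}{t}.$$
Decomposing $\pri_t \Av_t = \Theta_t^B + (\pri_t \Av_t - \Theta_t^B)$, the first-term estimate of Proposition \ref{Prop:Harm:FirstPrin} bounds the error by $\norm{f_{\Q,\nu}^B}^2 \lesssim \mu(\Q)$, and it remains to control $\iint \modulus{\Theta_t^B f_{\Q,\nu}^B}^2\, d\mu\, dt/t$ by $\mu(\Q)$.

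This last step is the main obstacle. I would split the $t$-integral at $t = \epsilon\ell(\Q)$: for $t \ge \epsilon\ell(\Q)$ the off-diagonal bounds together with the uniform $\Lp{2}$-boundedness of $\Theta_t^B$ reduce the integral to $\norm{\hat\eta_\Q\nu}^2 \lesssim \mu(\Q)$, while for $t < \epsilon\ell(\Q)$ one exploits the resolvent identity $R_s^B - R_s = -R_s^B\,is(\Pi_B - \Pi)\,R_s$ with $\Pi_B - \Pi = (B_1 - I)\adj{\Gamma}B_2 + \adj{\Gamma}(B_2 - I)$ to transfer the problem to the unperturbed $\Pi = \Gamma + \adj{\Gamma}$, which is self-adjoint under (H1)--(H3) and therefore satisfies the required quadratic estimate by the spectral theorem. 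The Lipschitz (rather than smooth) nature of $\hat\eta_\Q$ introduces no new difficulty, since by (H6) every commutator $[\Upsilon,\hat\eta_\Q]$ with $\Upsilon \in \set{\Gamma,\adj{\Gamma},\Pi,\Pi_B}$ is a multiplication operator bounded pointwise by $m\,\Lipp\hat\eta_\Q$, and hence all estimates in \cite{AKMC} transfer verbatim.
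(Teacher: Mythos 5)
Your overall scaffolding (a $T(b)$-style stopping-time argument with a Lipschitz cutoff, a test function built from the resolvent of $\Pi_B$, and a final reduction to the principal-part term) matches the structure of the paper's argument, which proceeds via Lemma~\ref{Lem:Harm:LipQ} and Lemma~\ref{Lem:Harm:fest1} and then cites Lemma~5.12 of \cite{AKMC}. However, two of the specific claims you rely on are false or unjustified, and they are precisely the places where the real work lies.

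First, the estimate $\norm{f_{\Q,\nu}^B - \hat\eta_{\Q}\nu}^2 \lesssim \epsilon^2\mu(\Q)$ is not true. Writing $f_{\Q,\nu}^B - \hat\eta_{\Q}\nu = -\imath\epsilon\ell(\Q)(I+\imath\epsilon\ell(\Q)\Pi_B)^{-1}\Pi_B(\hat\eta_{\Q}\nu)$, the term $\Pi_B(\hat\eta_{\Q}\nu)$ is \emph{not} controlled by $\ell(\Q)^{-1}\mu(\Q)^{1/2}$: since constants are not in $\nul(\Pi_B)$ (nor even, in general, in $\dom(\Gamma)$), the commutator bound of (H6) only yields $\Pi_B(\hat\eta_{\Q}\nu) = [\Pi_B,\hat\eta_{\Q}]\nu + \hat\eta_{\Q}\Pi_B\nu$ with a second term you cannot dispose of. The correct statement, and the one the paper proves as the third estimate of Lemma~\ref{Lem:Harm:fest1}, is that only the \emph{average} is small: $\modulus{\fint_{\Q} f^w_{\Q,\epsilon} - w}\lesssim\epsilon^{\eta/2}$. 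Obtaining this requires the cancellation hypothesis (H7) via Lemma~\ref{Lem:Harm:UpsilonHom}, which you never invoke. In the $\Lp{2}$ norm one only has $\norm{f^w_{\Q,\epsilon} - w_{\Q}} \lesssim \mu(\Q)^{1/2}$ with no gain in $\epsilon$; your Chebyshev-type control of the first class of stopping cubes therefore does not close.

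Second, your final paragraph proposes to bound $\iint_{\CBox_{\Q}}\modulus{\Theta_t^B f_{\Q,\nu}^B}^2\,d\mu\,dt/t$ by transferring to the unperturbed $\Pi$ via the resolvent identity $R_s^B - R_s = -R_s^B\,\imath s(\Pi_B-\Pi)R_s$ and then appealing to the spectral theorem. This cannot work: $\Pi_B - \Pi = (B_1-I)\adj{\Gamma}B_2 + \adj{\Gamma}(B_2-I)$ is an \emph{unbounded} perturbation, and the quadratic estimate for $\Pi_B$ is emphatically not obtainable by perturbing the spectral theorem for $\Pi$ — if it were, the entire $T(b)$ machinery would be superfluous. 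The paper avoids this pitfall by choosing the test function in the specific form $f^w_{\Q,\epsilon} = (1+\epsilon\len(\Q)\imath\adj{\Gamma}_B)(1+\epsilon\len(\Q)\imath\Pi_B)^{-1}w_{\Q}$; the prefactor $(1+\epsilon\len(\Q)\imath\adj{\Gamma}_B)$, absent from your $f_{\Q,\nu}^B$, is what allows the nilpotency $(\adj{\Gamma}_B)^2 = 0$ (from (H1) and (H3)) to produce the $\epsilon^{-2}\mu(\Q)$ bound in the second estimate of Lemma~\ref{Lem:Harm:fest1} using nothing beyond the uniform $\Lp{2}$-boundedness of the resolvent family. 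There is also a minor type mismatch in your construction: $\nu\in\bddlf(\C^N)$ is matrix-valued, so $\hat\eta_{\Q}\nu$ is not an element of $\Hil = \Lp{2}(\Spa,\C^N)$; the paper picks $w,\hat w\in\C^N$ with $\adj{\nu}\hat w = w$, $\modulus{w}=\modulus{\hat w}=1$, and sets $w_{\Q} = \eta_{\Q}w$.
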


We prove this via
defining a test function
similar to
the one found on page 484 in \cite{AKMC}.
Here, the authors
use a smooth cutoff function
in their construction.
Again, we rephrase this 
in terms of
a Lipschitz cutoff function
whose existence is guaranteed 
by the following lemma.

\begin{lemma}
\label{Lem:Harm:LipQ} 
Let $\Q \in \DyQ$. Then, there exists
a Lipschitz function $\eta: \Spa \to [0,1]$ such that
$\eta = 1$ on $B(x_{\Q},\tau C_1\len(\Q))$ and $\eta = 0$ on 
$\Spa \setminus B(x_{\Q},2\tau C_1\len(\Q))$ with
$$
\Lip \eta \leq \frac{4}{\tau C_1} \frac{1}{\len(\Q)}$$
whenever $\tau > 1$.
\end{lemma}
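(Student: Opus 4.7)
The plan is to apply the Lipschitz separation lemma (Lemma \ref{Lem:Harm:LipSep}) directly to a pair of concentric balls. Set $E = B(x_{\Q}, \tau C_1 \len(\Q))$ and $F = \Spa \setminus B(x_{\Q}, 2\tau C_1 \len(\Q))$, so that $E$ and $F$ are disjoint subsets of $\Spa$. First I would establish the separation estimate $d(E,F) \geq \tau C_1 \len(\Q) > 0$: for $x \in E$ and $y \in F$ one has $d(x_{\Q},x) < \tau C_1 \len(\Q)$ and $d(x_{\Q},y) \geq 2\tau C_1 \len(\Q)$, so the reverse triangle inequality gives
$$d(x,y) \geq d(x_{\Q},y) - d(x_{\Q},x) \geq 2\tau C_1 \len(\Q) - \tau C_1 \len(\Q) = \tau C_1 \len(\Q),$$
and taking the infimum over $x \in E$, $y \in F$ yields the claim.

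Next I would invoke Lemma \ref{Lem:Harm:LipSep} with this $(E,F)$ to produce a Lipschitz function $\eta : \Spa \to [0,1]$ and a set $\tilde E \supset E$ with $d(\tilde E, F) > 0$, such that $\eta|_{E} = 1$, $\eta|_{\Spa \setminus \tilde E} = 0$, and
$$\Lip \eta \leq \frac{4}{d(E,F)} \leq \frac{4}{\tau C_1 \len(\Q)}.$$
Since $d(\tilde E, F) > 0$ forces $\tilde E \cap F = \emptyset$, we have $F \subset \Spa \setminus \tilde E$, so $\eta$ vanishes on $F = \Spa \setminus B(x_{\Q}, 2\tau C_1 \len(\Q))$. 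Combined with $\eta \equiv 1$ on $E = B(x_{\Q}, \tau C_1 \len(\Q))$ and the Lipschitz bound above, this yields exactly the conclusion.

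There is no substantive obstacle here; the one point worth noting is that we do not need the auxiliary set $\tilde E$ provided by the separation lemma to be contained in $B(x_{\Q}, 2\tau C_1 \len(\Q))$. The only feature of $\tilde E$ that we actually use is that it is disjoint from $F$, and this is automatic from the conclusion of Lemma \ref{Lem:Harm:LipSep}. The hypothesis $\tau > 1$ plays no role in the construction of $\eta$ itself; it presumably serves in downstream applications of this cutoff to ensure that $B(x_{\Q}, \tau C_1 \len(\Q))$ strictly contains the cube $\Q$, using the inclusion $\Q \subset B(x_{\Q}, C_1 \len(\Q))$ implied by Theorem \ref{Thm:Dya:Christ}~(iv).
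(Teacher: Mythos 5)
Your proof is correct and follows the same approach as the paper: apply Lemma~\ref{Lem:Harm:LipSep} with $E = B(x_{\Q}, \tau C_1\len(\Q))$ and $F = \Spa \setminus B(x_{\Q}, 2\tau C_1\len(\Q))$ after establishing $d(E,F) \geq \tau C_1\len(\Q)$. Your explicit observation that $d(\tilde{E},F) > 0$ forces $F \subset \Spa \setminus \tilde{E}$, so that $\eta$ vanishes on all of $F$ and not merely on $\Spa \setminus \tilde{E}$, sharpens a point the paper's proof leaves implicit, and your remark on where $\tau > 1$ is actually used is accurate.
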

\begin{proof}
Fix $\Q \in \DyQ^j$, and
we have
$\Q \subset B(x_{\Q}, \tau C_1\delta^j) \subset B(x_{\Q}, 2\tau C_1\delta^j)$.
Also,
$$d(B(x_{\Q}, \tau C_1\delta^j), \Spa \setminus B(x_{\Q}, 2\tau C_1\delta^j))
	\geq (2\tau C_1 - \tau C_1)\delta^j = \tau C_1 \delta^j.$$

Now, we invoke %the Lipschitz separation 
Lemma 
\ref{Lem:Harm:LipSep} with 
$E = B(x_{\Q}, \tau C_1\delta^j)$ and $F = \Spa \setminus B(x_{\Q}, 2\tau C_1\delta^j)$
to find a Lipschitz $\eta: \Spa \to [0,1]$ with
$\eta = 1$ on $B(x_{\Q}, \tau C_1\delta^j)$, $\eta = 0$ on $\Spa \setminus B(x_{\Q}, \tau C_1\delta^j)$
and
$$
\Lip \eta \leq \frac{4}{d(B(x_{\Q}, \tau C_1\delta^j), \Spa \setminus B(x_{\Q}, 2\tau C_1\delta^j))}
	\leq \frac{4}{\tau C_1} \frac{1}{\delta^j}
	= \frac{4}{\tau C_1} \frac{1}{\len(\Q)}$$
which completes the proof.
\end{proof}

The test function is 
now defined as follows.
Let $\Q \in \DyQ$ and fix 
$\nu \in \bddlf(\C^N)$
with $\modulus{\nu} = 1$.
Let $\eta_{\Q}$
be the Lipschitz map guaranteed by
Lemma \ref{Lem:Harm:LipQ}
and let $w, \hat{w} \in \C^N$
such that $\adj{\nu}(\hat{w}) = w$
with $\modulus{w} = \modulus{\hat{w}} = 1$.
Furthermore, let $w_{\Q} = \eta_{\Q} w$ and
define
\begin{align*}
f^w_{\Q,\epsilon} &= w_{\Q} - \epsilon \len(\Q) \imath \Gamma
		(I + \epsilon\len(\Q)\imath \Pi_B)^{-1}w_{\Q}\\
	&= (1 + \epsilon \len(\Q)\imath \adj{\Gamma}_B)
		(1 + \epsilon \len(\Q)\imath \Pi_B)^{-1}w_{\Q}.
		\end{align*}
It is then an easy fact that
$\norm{w_{\Q}}^2 \leq
	(4\tau C_1a_0^{-1})^{\p} \mu(\Q)$
and we obtain the following lemma
analogous to Lemma 5.10 in \cite{AKMC}.

\begin{lemma}
\label{Lem:Harm:fest1}
There exists $c > 0$ such that for all $\epsilon > 0$,
$\norm{f^w_{\Q, \epsilon}} \leq c  \mu(\Q)^{\frac{1}{2}}$,
$\iint_{\CBox_{\Q}} \modulus{\Theta_t^B f^w_{\Q, \epsilon}}^2\ 
		d\mu(x) \dfrac{dt}{t}
		\leq c \dfrac{1}{\epsilon^2} \mu(\Q)$, and 
$\modulus{\fint_{\Q} f_{\Q,\epsilon}^w - w}\leq c \epsilon^{\frac{\eta}{2}}$.
\end{lemma}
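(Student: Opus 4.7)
The plan is to set $s = \epsilon \len(\Q)$ and introduce the auxiliary function $h = (1+is\Pi_B)^{-1}w_\Q$, so that the two equivalent expressions for $f^w_{\Q,\epsilon}$ in the statement become
$$
f^w_{\Q,\epsilon} = h + is \adj{\Gamma}_B h = w_\Q - is \Gamma h.
$$
Three ingredients will drive the whole argument: the uniform $\bddlf(\Hil)$-bounds on $R_s^B$, $s \Gamma R_s^B$, and $s \adj{\Gamma}_B R_s^B$ inherited from the operator-theoretic framework built from (H1)--(H3) in \cite{AKMC}; the bound $\norm{w_\Q}^2 \lesssim \mu(\Q)$ noted immediately before the lemma; and the Upsilon homogeneity estimate, \lemref{Lem:Harm:UpsilonHom}. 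The first assertion is then immediate: writing $f^w_{\Q,\epsilon} = w_\Q - i(s \Gamma R_s^B) w_\Q$ and applying the uniform bound on $s\Gamma R_s^B$ gives $\norm{f^w_{\Q,\epsilon}} \lesssim \norm{w_\Q} \lesssim \mu(\Q)^{1/2}$.

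For the second assertion, the crucial algebraic observation is that nilpotency, $\adj{\Gamma}_B^2 = 0$, forces $\adj{\Gamma}_B f^w_{\Q,\epsilon} = \adj{\Gamma}_B h$, while $\Pi_B^2 = \Gamma\adj{\Gamma}_B + \adj{\Gamma}_B\Gamma$ together with nilpotency yields $\adj{\Gamma}_B \Pi_B^2 = \adj{\Gamma}_B \Gamma \adj{\Gamma}_B = \Pi_B^2 \adj{\Gamma}_B$, so $\adj{\Gamma}_B$ commutes with the resolvent $P_t^B = (1+t^2\Pi_B^2)^{-1}$ on the relevant domain. Consequently $\Theta_t^B f^w_{\Q,\epsilon} = t \adj{\Gamma}_B P_t^B f^w_{\Q,\epsilon} = t P_t^B \adj{\Gamma}_B h$. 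Combining $\norm{P_t^B} \lesssim 1$ with $\norm{\adj{\Gamma}_B h} = s^{-1}\norm{f^w_{\Q,\epsilon} - h} \lesssim \norm{w_\Q}/s$ yields $\norm{\Theta_t^B f^w_{\Q,\epsilon}} \lesssim t \norm{w_\Q}/s$, and integrating against $dt/t$ over $(0,\len(\Q)]$ produces $\norm{w_\Q}^2 \len(\Q)^2/(2s^2) = \norm{w_\Q}^2/(2\epsilon^2) \lesssim \mu(\Q)/\epsilon^2$, which majorises the double integral over $\CBox_\Q \subset \Spa \times (0,\len(\Q)]$.

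For the third assertion, since $\tau > 1$ in \lemref{Lem:Harm:LipQ} the cutoff satisfies $\eta_\Q \equiv 1$ on $\Q$, so $w_\Q = w$ on $\Q$ and hence $\fint_\Q f^w_{\Q,\epsilon} - w = -is \fint_\Q \Gamma h$. Applying \lemref{Lem:Harm:UpsilonHom} with $\Upsilon = \Gamma$ and $u = h$ at scale $t = \len(\Q)$, while noting that $\fint_\Q \modulus{h}^2 \lesssim 1$ and $\fint_\Q \modulus{\Gamma h}^2 \lesssim s^{-2}$ (both from the resolvent bounds applied to $w_\Q$), yields $\modulus{\fint_\Q \Gamma h}^2 \lesssim t^{-\eta} s^{\eta - 2}$, and therefore $\modulus{\fint_\Q f^w_{\Q,\epsilon} - w}^2 = s^2 \modulus{\fint_\Q \Gamma h}^2 \lesssim (s/t)^\eta = \epsilon^\eta$.

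The principal obstacle is the algebraic bookkeeping in the second estimate: one must verify carefully that $\adj{\Gamma}_B$ genuinely commutes with $P_t^B$ on the subspace in question and that the individual bounds $\norm{s\Gamma R_s^B}, \norm{s\adj{\Gamma}_B R_s^B} \lesssim 1$ (which go slightly beyond the off-diagonal estimates of Proposition~\ref{Prop:Harm:OffDiag}) are available from the $H^\infty$-calculus setup inherited from \cite{AKMC}. Once these are in place, the three estimates follow by routine computation.
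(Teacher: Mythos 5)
Your proposal is correct and follows essentially the same route as the paper: for the first two estimates you reconstruct the operator-theoretic argument from Lemma 5.10 of \cite{AKMC} (which the paper cites without repeating), using the uniform bounds on $R_s^B$, $s\Gamma R_s^B$, $s\adj{\Gamma}_B R_s^B$ from the (H1)--(H3) framework, the nilpotency $\adj{\Gamma}_B^2 = 0$, and the commutation of $\adj{\Gamma}_B$ with $P_t^B$, exactly as in \cite{AKMC}; and for the third estimate you apply Lemma~\ref{Lem:Harm:UpsilonHom} with $\Upsilon = \Gamma$ and $u = h = R_s^B w_\Q$ at scale $t = \len(\Q)$, which is precisely what the paper does. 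The only difference is that you spell out the computations for the first two estimates in full, whereas the paper delegates them to \cite{AKMC}; this adds clarity but not new ideas.
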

\begin{proof}
The proof of the first two
estimates are essentially the 
same as that of Lemma 5.10 in \cite{AKMC}. 
To prove the last estimate,
note that since $\eta_{\Q} = 1$ on $\Q$,
we have on $\Q$ that
\begin{align*}
f_{\Q,\epsilon}^w - w 
	&=  w_{\Q} - \epsilon \len(\Q) 
		\imath(1 + \epsilon \len(\Q)\imath \Pi_B)^{-1}w_{\Q} - w \\
	&= (\eta_{\Q} - 1)w - \epsilon \len(\Q) 
		\imath(1 + \epsilon \len(\Q)\imath \Pi_B)^{-1}w_{\Q}  \\
	&= - \epsilon \len(\Q) 
		\imath(1 + \epsilon \len(\Q)\imath \Pi_B)^{-1}w_{\Q}.
\end{align*}

Setting $u = (1 + \epsilon\len(\Q)\imath \Pi_B)^{-1}w_{\Q}$
and $\Upsilon = \Gamma$,
we apply Lemma \ref{Lem:Harm:UpsilonHom}
\begin{align*}
\modulus{\fint_{\Q} f_{\Q,\epsilon}^w - w}
	&= \modulus{\fint_{\Q} \epsilon \len(\Q) 
		\imath(1 + \epsilon \len(\Q)\imath \Pi_B)^{-1}w_{\Q}} \\
	&= \epsilon\len(\Q) \modulus{\fint{\Q} 
		\imath(1 + \epsilon \len(\Q)\imath \Pi_B)^{-1}w_{\Q}} \\
	&\lesssim \frac{\epsilon\len(\Q)}{t^{\frac{\eta}{2}}}
		\cbrac{\fint_{\Q}\modulus{(1+\epsilon\len(\Q)\imath \Pi_B)^{-1}w_{\Q}}\ d\mu}^{\frac{\eta}{4}} \\
		&\qquad\qquad\qquad
		\cbrac{\fint_{\Q}\modulus{\Gamma(1 + \epsilon \len(\Q)\imath\Pi_B)^{-1}w_{\Q}}^2\ d\mu}^{\frac{1}{2}-\frac{\eta}{4}} \\
	&= \cbrac{\frac{\epsilon\len(\Q)}{t}}^{\frac{\eta}{2}}
		\cbrac{\fint_{\Q}\modulus{(1+\epsilon\len(\Q)\imath \Pi_B)^{-1}w_{\Q}}\ d\mu}^{\frac{\eta}{4}}  \\
		&\qquad\qquad\qquad
		\cbrac{\fint_{\Q}\modulus{\epsilon\len(\Q)\imath\Gamma
	(1 + \epsilon \len(\Q)\imath\Pi_B)^{-1}w_{\Q}}^2\ d\mu}^{\frac{1}{2}-\frac{\eta}{4}}. \\
\end{align*}
The proof is 
completed by
noting $t \simeq \len(\Q)$
and invoking  Proposition 2.5 and  Lemma 4.2 of \cite{AKMC}.
\end{proof}

The proof of Proposition \ref{Prop:Harm:Carl}
then follows a procedure 
similar to that 
which is used to prove  Lemma 5.12 in \cite{AKMC}.

We note that our hypotheses (H1)-(H8) remain
unchanged upon replacing $(\Gamma,B_1,B_2)$
by
$(\adj{\Gamma},B_2,B_1)$,
$(\adj{\Gamma},\adj{B_2}, \adj{B_1})$
and $(\Gamma, \adj{B_1}, \adj{B_2})$. 
Thus,
the hypothesis of Proposition
\ref{Prop:HypRes:Main}
is satisfied 
and Theorem \ref{Thm:HypRes:Main} is proved.

% Bibliography
%\bibliographystyle{amsalpha}
\bibliographystyle{amsplain}

\begin{thebibliography}{10}

\bibitem{ADMc}
David Albrecht, Xuan Duong, and Alan McIntosh, \emph{Operator theory and
  harmonic analysis}, Instructional {W}orkshop on {A}nalysis and {G}eometry,
  {P}art {III} ({C}anberra, 1995), Proc. Centre Math. Appl. Austral. Nat.
  Univ., vol.~34, Austral. Nat. Univ., Canberra, 1996, pp.~77--136. \MR{1394696
  (97e:47001)}

\bibitem{AHLMcT}
Pascal Auscher, Steve Hofmann, Michael Lacey, Alan McIntosh, and Ph.
  Tchamitchian, \emph{The solution of the {K}ato square root problem for second
  order elliptic operators on {${\mathbb R}\sp n$}}, Ann. of Math. (2)
  \textbf{156} (2002), no.~2, 633--654.

\bibitem{AT}
Pascal Auscher and Philippe Tchamitchian, \emph{Square root problem for
  divergence operators and related topics}, Ast\'erisque (1998), no.~249,
  viii+172.

\bibitem{AKM2}
Andreas Axelsson, Stephen Keith, and Alan McIntosh, \emph{The {K}ato square
  root problem for mixed boundary value problems}, J. London Math. Soc. (2)
  \textbf{74} (2006), no.~1, 113--130.

\bibitem{AKMC}
\bysame, \emph{Quadratic estimates and functional calculi of perturbed {D}irac
  operators}, Invent. Math. \textbf{163} (2006), no.~3, 455--497.

\bibitem{Cheeger}
Jeff Cheeger, \emph{Differentiability of {L}ipschitz functions on metric
  measure spaces}, Geom. Funct. Anal. \textbf{9} (1999), no.~3, 428--517.

\bibitem{Christ}
Michael Christ, \emph{A {$T(b)$} theorem with remarks on analytic capacity and
  the {C}auchy integral}, Colloq. Math. \textbf{60/61} (1990), no.~2, 601--628.

\bibitem{CW}
Ronald~R. Coifman and Guido Weiss, \emph{Analyse harmonique non-commutative sur
  certains espaces homog\`enes}, Lecture Notes in Mathematics, Vol. 242,
  Springer-Verlag, Berlin, 1971, {\'E}tude de certaines int{\'e}grales
  singuli{\`e}res.

\bibitem{Heinonen}
Juha Heinonen, \emph{Lectures on the analysis on metric spaces}, Springer,
  2001.

\bibitem{HofmannEx}
Steve Hofmann, \emph{A short course on the {K}ato problem}, Second {S}ummer
  {S}chool in {A}nalysis and {M}athematical {P}hysics ({C}uernavaca, 2000),
  Contemp. Math., vol. 289, Amer. Math. Soc., Providence, RI, 2001, pp.~61--77.

\bibitem{Kato}
Tosio Kato, \emph{Perturbation theory for linear operators}, second ed.,
  Springer-Verlag, Berlin, 1976, Grundlehren der Mathematischen Wissenschaften,
  Band 132.

\bibitem{Keith}
Stephen Keith, \emph{A differentiable structure for metric measure spaces},
  Adv. Math. \textbf{183} (2004), no.~2, 271--315.

\bibitem{Mc90}
Alan McIntosh, \emph{The square root problem for elliptic operators: a survey},
  Functional-analytic methods for partial differential equations ({T}okyo,
  1989), Lecture Notes in Math., vol. 1450, Springer, Berlin, 1990,
  pp.~122--140.


\bibitem{MorrisLocal}
Andrew J. Morris
\emph{Local quadratic estimates and holomorphic functional calculi},
  AMSI-ANU Workshop on Spectral Theory and Harmonic Analysis, Proc. Centre
  Math. Anal. Austral. Nat. Univ., vol.~44, Austral. Nat. Univ., Canberra,
  2010, pp.~211--231.


\bibitem{Morris}
\bysame,
\emph{The {K}ato square root problem on submanifolds},
J. London Math. Soc. (3) {\bf 86} (2012), 879-910. 

\bibitem{stein:harm}
Elias Stein, \emph{Harmonic analysis: real-variable methods, orthogonality, and
  oscillatory integrals}, Princeton Mathematical Series, vol.~43, Princeton
  University Press, Princeton, NJ, 1993.

\end{thebibliography}

\providecommand{\bysame}{\leavevmode\hbox to3em{\hrulefill}\thinspace}
\providecommand{\MR}{\relax\ifhmode\unskip\space\fi MR }
% \MRhref is called by the amsart/book/proc definition of \MR.
\providecommand{\MRhref}[2]{%
  \href{http://www.ams.org/mathscinet-getitem?mr=#1}{#2}
}
\providecommand{\href}[2]{#2}

\setlength{\parskip}{0mm}

\end{document}